\theoremstyle{plain}
\newtheorem{theorem}{Theorem}[section]
\newtheorem{corollary}{Corollary}[section]
\newtheorem{proposition}{Proposition}[section]
\theoremstyle{definition}
\newtheorem{definition}{Definition}[section]
\newtheorem{remark}{Remark}[section]
\newtheorem{example}{Example}[section]
\begin{document}

\title{On almost complex Lie algebroids}
\author{Cristian Ida and Paul Popescu}
\date{}
\maketitle
\begin{abstract}
The almost complex Lie algebroids over smooth manifolds are introduced in the paper. In the first part we give some examples and we obtain a Newlander-Nirenberg type theorem on almost complex Lie algebroids. Next the almost Hermitian Lie algebroids and some related structures on the associated complex Lie algebroid are studied. For instance, we obtain that the $E$-Chern form of $E^{1,0}$ associated to an almost complex connection $\nabla$ on $E$ can be expressed in terms of the matrix $J_ER$, where $J_E$ is the almost complex structure of $E$ and $R$ is the curvature of $\nabla$. Also, we consider a metric product connection associated to an almost Hermitian Lie algebroid and we prove that the mean curvature section of $E^{0,1}$ vanishes and the second fundamental $2$--form section of $E^{0,1}$ vanishes iff the Lie algebroid is Hermitian. 
\end{abstract}

\medskip 
\begin{flushleft}
\strut \textbf{2010 Mathematics Subject Classification:} 32Q60, 53C15, 53C55, 22A22, 17B66.

\textbf{Key Words:} almost complex manifolds, Hermitian (K\"{a}hlerian) metrics, Lie algebroids. 
\end{flushleft}

\section{Introduction and preliminaries}
\setcounter{equation}{0}
The Lie algebroids, \cite{Mack, Mack2}, are generalizations of Lie algebras and integrable distributions. In fact a Lie algebroid is an anchored vector bundle with a Lie bracket on module of sections. The cotangent bundle of a Poisson manifold has a natural structure of a Lie algebroid and between Poisson structures and Lie algebroids are many other connections, as for instance for every Lie algebroid structure on an anchored vector bundle there is a specific linear Poisson structure on the corresponding dual vector bundle and conversely. In the last decades the Lie algebroids are intensively studied by many authors, see for instance \cite{C-M,  Fe, H-M, Kos, L-M-M, Mack, Ma1,  Ma3}, from more points of view in the context of some different categories in differential geometry. Recently,  in the category of complex analytic geometry and of the $C^\infty$--foliated category, a general study of holomorphic Lie algebroids and  of foliated Lie and Courant algebroids is due to \cite{L-S-X, L-S-X2} and  \cite{Va3,Va2}, respectively. Also, in the category of Banach geometry the study of Lie algebroids was initiated in \cite{An2, An3} and  some significant developments are  given in \cite{Ca-Pe}. On the other hand, the study of Riemannian geometry of Lie algebroids is introduced and intensively studied in \cite{Bo} and a first treatment of (para) K\"{a}hlerian Lie algebroids can be found in \cite{L-T-W}. Other important structures as symplectic, hypersymplectic or Poisson structures on Lie algebroids are studied, see for instance \cite{A-C-N, I-M-D-M-P, Kos}. 

The notion of almost complex Lie algebroids over almost complex manifolds was introduced in \cite{B-R} as a natural extension of the notion of an almost complex manifold to that of an almost complex Lie algebroid. This generalizes the definition of an almost complex Poisson manifold given in \cite{C-F-I-U}, where some examples are also given. In \cite{B-R} this notion is used in order to obtain some cohomology theories  for skew-holomorphic Lie algebroids. Starting from the definition of an almost complex Lie algebroid, \cite{B-R}, but also from the general interest in the study of Lie algebroids, we have to consider that a general study of almost complex geometry in the almost complex Lie algebroids framework can be of some interest. However, for our purpose we will consider the almost complex Lie algebroids over smooth manifolds, not necessarily almost complex.

	The paper is organized as it follows. In the preliminary section we briefly recall some basic facts about Lie algebroids. For more, see for instance \cite{C-M, Fe, H-M, Mack, M, Po}. In the second section we define almost complex Lie algebroids over smooth manifolds, we present some examples and we obtain a Newlander-Nirenberg type theorem (Theorem \ref{t1}). Also the particular case when the base manifold is almost complex is discussed. In the third section we make a general approach about almost Hermitian Lie algebroids and sectional curvature of K\"{a}hlerian Lie algebroids over smooth manifolds in relation with corresponding notions from the geometry of almost complex manifolds \cite{G-O, Ya}. In particular we obtain a Schur type theorem for transitive K\"{a}hlerian Lie algebroids (Theorem \ref{Schur}). In the four section, the Hermitian metrics and linear connections compatible with such metrics on the associated complex Lie algebroid are studied and we present the Levi-Civita connection associated to such metrics. Also, we describe some $E$-Chern forms of $E^{1,0}$ associated to an almost complex connection $\nabla$ on $E$ in terms of the matrix $J_ER$, where $J_E$ is the almost complex structure of $E$ and $R$ is the curvature of $\nabla$ (Theorem \ref{Chern}). Finally, we consider a metric product connection associated to an almost Hermitian Lie algebroid and a $2$--form section for $E^{0,1}$ similar to the second fundamental form of complex distributions is studied in our setting. In particular, we prove that the mean curvature section of $E^{0,1}$ vanishes and the second fundamental $2$--form section of $E^{0,1}$ vanishes iff the Lie algebroid is Hermitian (Corollary \ref{curvsection}).

We notice that the present paper can be considered as an introduction of basic elements of almost complex geometry in the almost complex Lie algebroids framework. Some of these notions are continued in \cite{Po2} where almost complex Poisson structures on almost complex Lie algebroids are studied, but ohter problems related to almost complex geometry or complex (holomorphic) geometry as for instance: Laplacians or vanishing theorems are still open in the framework of Lie algebroids, as well as the study of anti-Hermitian (K\"{a}hlerian) Lie algebroids.  Also taking into account the role of almost complex geometry in the study of almost contact geometry the present notions can be useful in the study of almost contact or contact structures on Lie algebroids.

\subsection{Basic notions on Lie algebroids}
\begin{definition}
We say that $p:E\rightarrow M$ is an \textit{anchored} vector bundle if there exists a  vector bundle morphism $\rho:E\rightarrow TM$. The morphism $\rho$ will be called the anchor map.
\end{definition}
\begin{definition}
Let $(E,p,M)$ and $(E^{\prime},p^\prime,M^\prime)$ be two anchored vector bundles over the same base $M$ with the anchors $\rho:E\rightarrow TM$ and $\rho^\prime:E^\prime\rightarrow TM$. A morphism of anchored vector bundles over $M$ or a \textit{$M$--morphism of anchored vector bundles} between $(E,\rho)$ and $(E^\prime,\rho^\prime)$ is a morphism of vector bundles $\varphi:(E,p,M)\rightarrow(E^\prime,p^\prime,M)$ such that $\rho^\prime\circ\varphi=\rho$.
\end{definition}
The anchored vector bundles over the same base $M$ form a category. The objects are the pairs $(E,\rho_E)$ with $\rho_E$ the anchor of $E$ and a morphism $\phi:(E,\rho_E)\rightarrow(F,\rho_F)$ is a vector bundle morphism $\phi:E\rightarrow F$ which verifies the condition $\rho_F\circ\phi=\rho_E$.

Let $p:E\rightarrow M$ be an anchored vector bundle with the anchor $\rho:E\rightarrow TM$ and the induced morphism $\rho_E:\Gamma(E)\rightarrow \mathcal{X}(M)$. Assume there exists defined a bracket $[\cdot,\cdot]_E$ on the space $\Gamma(E)$ that provides a structure of real Lie algebra on $\Gamma(E)$.
\begin{definition}
The triplet $(E,\rho_E,[\cdot,\cdot]_E)$ is called a \textit{Lie algebroid} if
\begin{enumerate}
\item[i)] $\rho_E:(\Gamma(E,[\cdot,\cdot]_E)\rightarrow(\mathcal{X}(M),[\cdot,\cdot])$ is a Lie algebra homomorphism, that is

$\rho_E([s_1,s_2]_E)=[\rho_E(s_1),\rho_E(s_2)]$;

\item[ii)] $[s_1,fs_2]_E=f[s_1,s_2]_E+\rho_E(s_1)(f)s_2$, for every $s_1,s_2\in\Gamma(E)$ and $f\in C^{\infty}(M)$.
\end{enumerate}

\end{definition}
A Lie algebroid $(E,\rho_E,[\cdot,\cdot]_E)$ is said to be \textit{transitive}, if $\rho_E$ is surjective.

There exists a canonical cohomology theory associated to a Lie algebroid $(E,\rho_E,[\cdot,\cdot]_E)$ over a smooth manifold $M$. The space $C^{\infty}(M)$ is a $\Gamma(E)$-module relative to the representation $\Gamma(E)\times C^{\infty}(M)\rightarrow C^{\infty}(M),\,\,(s,f)\mapsto\rho_E(s)f$.

Following the well-known Chevalley-Eilenberg cohomology theory \cite{C-E}, we can introduce a cohomology complex associated to the Lie algebroid as follows. A $p$-linear mapping $\omega:\Gamma(E)\times\ldots\times\Gamma(E)\rightarrow C^{\infty}(M)$ is called a $C^{\infty}(M)$-valued $p$-cochain. Let $\mathcal{C}^p(E)$ denote the vector space of these cochains. The operator $d_E:\mathcal{C}^p(E)\rightarrow \mathcal{C}^{p+1}(E)$ given by
\begin{equation}
\begin{array}{ll}
d_E\omega (s_0, \ldots, s_r)=\sum\limits_{i=0}^r(-1)^i\rho _E(s_i)(\omega (s_0, \ldots, \widehat{s_i}, \ldots, s_r)) \\ 
\,\, & \,\, \\
\,\,\,\,\,\,\,\,\,\,\,\,\,\,\,\,\,\,\,\,\,\,\,\,\,\,\,\,\,\,\,\,\,\,\,\,\,\,\,\,\,\,\,\,\,\,\,\,\,\,+\sum\limits_{i<j=1}^r(-1)^{i+j}
\omega ([s_i, s_j]_E, s_0, \ldots, \widehat{s_i}, \ldots, \widehat{s_j}, \ldots, s_r) 
\end{array}
\label{I11}
\end{equation}
for $\omega\in \mathcal{C}^p(E)$ and $s_0,\ldots,s_p\in\Gamma(E)$, defines a coboundary since
$d_E\circ d_E=0$. Hence, $(\mathcal{C}^p(E),d_E)$, $p\geq1$ is a differential complex and the
corresponding cohomology spaces are called the cohomology groups of $\Gamma(E)$ with coefficients in $C^{\infty}(M)$. We notice that if $\omega\in \mathcal{C}^p(E)$ is  skew-symmetric and $C^{\infty}(M)$-linear, then $d_E\omega$ also is skew-symmetric.
From now on, the subspace of skew-symmetric and $C^{\infty}(M)$-linear cochains of the space $\mathcal{C}^p(E)$ will be denoted by $\Omega^p(E)$ and its elements will be called \textit{$p$--forms} on $E$. The \textit{Lie algebroid cohomology} $H^p(E)$ of $(E,\rho_E,[\cdot,\cdot]_E)$ is the cohomology of the subcomplex $(\Omega^p(E),d_E)$, $p\geq 1$.

\begin{definition}
Let $(E,\rho_E,[\cdot,\cdot]_E)$ and $(E^\prime,\rho_{E^\prime},[\cdot,\cdot]_{E^\prime})$ be two Lie algebroids over $M$. A
\textit{morphism of Lie algebroids} over $M$, is a morphism $\varphi:(E,\rho_E)\rightarrow(E^\prime,\rho_{E^\prime})$ of anchored
vector bundles with property that:
\begin{equation}
\label{I1}
d_E\circ \varphi^*=\varphi^*\circ d_{E^{\prime}},
\end{equation}
where $\varphi^*:\Omega^p(E^{\prime})\rightarrow\Omega^p(E)$ is defined by 
\begin{displaymath}
(\varphi^*\omega^{\prime})(s_1,\ldots,s_p)=\omega^{\prime}(\varphi(s_1),\ldots,\varphi(s_p)),\,\omega^{\prime}\in\Omega^p(E^{\prime}),\,s_1,\ldots,s_p\in\Gamma(E).
\end{displaymath}
We also say that $\varphi$ is a \textit{$M$--morphism of Lie algebroids}.
\end{definition}
Alternatively, we say that $\varphi:(E,\rho_E)\rightarrow(E^\prime,\rho_{E^\prime})$ is a \textit{$M$--morphism of Lie algebroids} if
$\varphi\left([s_1,s_2]_E\right)=\left[\varphi(s_1),\varphi(s_2)\right]_{E^\prime}\,,\,\,\forall \,s_1,s_2\in\Gamma(E)$.

The Lie algebroids over the same manifold $M$ and all $M$--morphisms of Lie
algebroids form a category, which is, via a forgetful functor, a subcategory of the category of anchored vector bundles over $M$.

If we consider $(x^{i})$, $i=1,\ldots,n$  a local coordinates system on $U\subset M$ and $\{e_a\}$, $a=1,\ldots,m$  a local basis of sections on the bundle $E$ over $U$, where $\dim M=n$ and ${\rm rank}\,E=m$, then $(x^{i},y^{a})$, $i=1,\ldots,n$, $a=1,\ldots,m$ are local coordinates on $E$. 
In a such local coordinates system, the anchor $\rho_E$ and the Lie bracket $[\cdot,\cdot]_E$ are expressed by the smooth functions $\rho^{i}_a$ and $C^a_{bc}$, namely
\begin{equation}
\label{I2}
\rho_E(e_a)=\rho^{i}_a\frac{\partial}{\partial x^{i}}\,\,{\rm and}\,\,[e_a,e_b]_E=C^c_{ab}e_c\,,\,i=1,\ldots,n,\,a,b,c=1,\ldots,m.
\end{equation}
The functions $\rho^{i}_a\,,\,C^{a}_{bc}\in C^\infty(M)$ given by the above relations are called the \textit{structure functions} of Lie algebroid $(E,\rho_E,[\cdot,\cdot]_E)$ in the given local coordinates system and their verify the following relations:
\begin{equation}
\label{I3}
\rho^j_a\frac{\partial\rho^{i}_b}{\partial x^j}-\rho^j_b\frac{\partial\rho^{i}_a}{\partial x^j}=\rho^{i}_cC^c_{ab}\,,\,C^c_{ab}=-C^c_{ba}\,,\,\sum_{cycl(a,b,c)}\left(\rho^{i}_a\frac{\partial C^d_{bc}}{\partial x^{i}}+C^{e}_{ab}C^d_{ce}\right)=0.
\end{equation}

The equations \eqref{I3} are called the \textit{structure equations} of Lie algebroid $(E,\rho_E,[\cdot,\cdot]_E)$.

\subsection{Linear connections. Torsion and curvature}

\begin{definition}
A \textit{linear connection} on the Lie algebroid $(E,\rho_E,[\cdot,\cdot]_E)$ over $M$, is a map $\nabla:\Gamma(E)\times\Gamma(E)\rightarrow\Gamma(E)$, $(s_1,s_2)\mapsto\nabla(s_1,s_2):=\nabla_{s_1}s_2\in\Gamma(E)$ such that:
\begin{enumerate}
\item[(1)] $\nabla$ is $\mathbb{R}$--bilinear;
\item[(2)] $\nabla_{fs_1}s_2=f\nabla_{s_1}s_2$, for all $f\in C^\infty(M)$ and $s_1,s_2\in\Gamma(E)$;
\item[(3)] $\nabla_{s_1}(fs_2)=(\rho_E(s_1)f)s_2+f\nabla_{s_1}s_2$, for all $f\in C^\infty(M)$ and $s_1,s_2\in\Gamma(E)$.
\end{enumerate}
\end{definition}
\begin{remark}
A linear connection on a Lie algebroid $(E,\rho_E,[\cdot,\cdot]_E)$ is in fact an $E$--connection in the vector bundle $E$. See for instance \cite{Fe} for the definition of an $E$-connection in a general vector bundle $F$.
\end{remark}
For every $s_1,s_2\in\Gamma(E)$, the section $\nabla_{s_1}s_2\in\Gamma(E)$ is called the \textit{covariant derivative of the section $s_2$ with respect to section $s_1$}. If $\{e_a\}$, $a=1,\ldots,m$ a local basis of sections on $E$ over $U\subset M$  then a linear connection $\nabla$ on $(E,\rho_E,[\cdot,\cdot]_E)$ is locally defined by a set of \textit{coefficient functions} $\Gamma^c_{ab}\in C^\infty(M)$ given by $\nabla_{e_a}e_b=\Gamma^c_{ab}e_c$. Then for every sections $s_1,s_2\in\Gamma(E)$ locally given by $s_1=s_1^{a}e_a$, $s_2=s_2^{b}e_b$, the covariant
derivative of the section $s_2$ with respect to section $s_1$ is given by $\nabla_{s_1}s_2=\left(s_1^{a}\rho_a^{i}\frac{\partial s_2^c}{\partial x^{i}}+\Gamma^c_{ab}s_1^{a}s_2^b\right)e_c$.

If $\nabla$ is a linear connection on the Lie algebroid $(E,\rho_E,[\cdot,\cdot]_E)$, the map $T:\Gamma(E)\times\Gamma(E)\rightarrow \Gamma(E)$ defined by
\begin{equation}
\label{I7}
T(s_1,s_2)=\nabla_{s_1}s_2-\nabla_{s_2}s_1-[s_1,s_2]_E\,,\,\,\forall\,s_1,s_2\in\Gamma(E),
\end{equation}
is called the \textit{torsion} of $\nabla$. We have that $T$ defined above is a tensor of type $(2,1)$ on $E$ which is $C^\infty(M)$--bilinear and antisymmetric.

Also, for a given linear connection $\nabla$ on the Lie algebroid $(E,\rho_E,[\cdot,\cdot]_E)$ we consider the map
$R:\Gamma(E)\times\Gamma(E)\times\Gamma(E)\rightarrow\Gamma(E)\,,\,(s_1,s_2,s_3)\mapsto R(s_1,s_2,s_3)=R(s_1,s_2)s_3$,
where the section $R(s_1,s_2)s_3$ is defined by
\begin{equation}
\label{I9}
R(s_1,s_2)s_3=\nabla_{s_1}\nabla_{s_2}s_3-\nabla_{s_2}\nabla_{s_1}s_3-\nabla_{[s_1,s_2]_E}s_3\,,\,\,\forall\,s_1,s_2,s_3\in\Gamma(E).
\end{equation}
It is easy to see that the map $R$ is $C^\infty(M)$--linear in every argument and it is antisymmetric with respect to the first two arguments, that is $R(s_1,s_2,s_3)=-R(s_2,s_1,s_3)\,,\,\,\forall\,s_1,s_2,s_3\in\Gamma(E)$. The map $R$ defined by \eqref{I9} is called the \textit{curvature} of the linear connection $\nabla$.
\begin{remark}
A linear connection $\nabla$ the Lie algebroid $(E,\rho_E,[\cdot,\cdot]_E)$ can be viewed as a map denoted again by $\nabla$ from $\Gamma(E)$ in $\Omega^1(E)\otimes\Gamma(E)$. More exactly, if $\{e_a\}$, $a=1,\ldots,m$ is a local basis for the sections of $E$, with respect to this basis, we can associate to the connection $\nabla$ the matrix $\theta=(\theta^b_a)$, $a,b=1,\ldots,m$ with elements $1$--forms on $E$ such that $\nabla_se_a=\sum\limits_{b=1}^m\theta_a^b(s)\otimes e_b$, $a=1,\ldots,m$.
Similarly, for the curvature of $\nabla$ we can associate in local basis $\{e_a\}$, $a=1,\ldots,m$ a matrix $(R^b_a)$, $a,b=1,\ldots,m$ with elements $2$--forms on $E$.
\end{remark}

We notice that for a given linear connection $\nabla$ on a Lie algebroid the exterior derivative $d_E$ can be usually expressed in terms of covariant derivative with respect to $\nabla$ and torsion of $\nabla$. Also the usual Bianchi identities hold in the Lie algebroid framework, see for instance \cite{Fe}.

\section{Almost complex Lie algebroids}
\setcounter{equation}{0}

In this section we define almost complex Lie algebroids over smooth manifolds, we present some examples and we obtain a Newlander-Nirenberg type theorem. Also the particular case when the base manifold is almost complex is discussed.

\subsection{Basic definitions, examples and results}

Let us consider a smooth manifold $M$, not necessarily almost complex, and a Lie algebroid $(E,\rho_E,[\cdot,\cdot]_E)$ over $M$ such that ${\rm rank}\,E=2m$. 

\begin{definition}  An \textit{almost complex structure $J_E$} on $(E,\rho_E,[\cdot,\cdot]_E)$ is an endomorphism $J_E:\Gamma(E)\rightarrow \Gamma(E)$, over the identity, such that $J_E^2=-{\rm id_{\Gamma(E)}}$ and a Lie algebroid $(E,\rho_E,[\cdot,\cdot]_E,J_E)$ endowed with such a structure will be called an \textit{almost complex Lie algebroid}.
\end{definition}

\begin{example}
Let $(M,J)$ be an almost complex manifold. Then its tangent bundle $TM$ is an almost complex Lie algebroid over $M$ with anchor the identity of $TM$ and with usual Lie bracket of vector fields.  
\end{example}

\begin{example}
\label{e2.2}
Let $(M,J,\pi^{2,0})$ be an almost complex Poisson manifold, see \cite{C-F-I-U}. Then $(M,\pi)$ is a real Poisson manifold, where $\pi=\pi^{2,0}+\overline{\pi^{2,0}}$. Then $\left(T^*M,\pi^{\#},\{\cdot,\cdot\}_{\pi}, J^*\right)$ is an almost complex Lie algebroid, where $J^*$ is the natural almost complex structure induced by $J$, $\pi^{\#}:\Gamma(T^*M)\rightarrow\Gamma(TM)$ is defined by $\pi^{\#}(\alpha)(\beta)=\pi(\alpha,\beta)$ and  $\{\alpha,\beta\}_{\pi}=\mathcal{L}_{\pi^{\#}(\alpha)}\beta-\mathcal{L}_{\pi^{\#}(\beta)}\alpha-d\pi(\alpha,\beta)$.
\end{example}

\begin{example}
\label{e2.1}
(Complete lift to the prolongation of a Lie algebroid). For a Lie algebroid $(E,\rho_E,[\cdot,\cdot]_E)$ with ${\rm rank}\,E=m$ we can consider the prolongation of $E$, see \cite{H-M,  Ma1, Ma3}, which is a vector bundle $p_L:\mathcal{L}^p(E)\rightarrow E$ of ${\rm rank}\,\mathcal{L}^p(E)=2m$ which has a Lie algebroid structure over $E$. More exactly, $\mathcal{L}^p(E)$ is the subset of $E\times TE$ defined by $\mathcal{L}^p(E)=\{(u,z)\,|\,\rho_E(u)=p_*(z)\}$, where $p_*:TE\rightarrow TM$ is the canonical projection. The projection on the second factor $\rho_{\mathcal{L}^p(E)}:\mathcal{L}^p(E)\rightarrow TE$, given by $\rho_{\mathcal{L}^p(E)}(u,z)=z$ will be the anchor of the prolongation Lie algebroid $\left(\mathcal{L}^p(E),\rho_{\mathcal{L}^p(E)},[\cdot,\cdot]_{\mathcal{L}^p(E)}\right)$ over $E$. For a smooth function $f\in C^\infty(M)$ its \textit{complete} and \textit{vertical lift} to $E$, $f^c$ and $f^v$ respectively, are given by $f^c(u)=\rho_E(u)f$ and $f^v(u)=(f\circ p)(u)$ for every $u\in E$. According to \cite{Ma1,Ma3}, we can consider the \textit{vertical lift} $s^v$ and the \textit{complete lift} $s^c$ of a section $s\in\Gamma(E)$ as sections of $\mathcal{L}^p(E)$ as follows. The local basis of $\Gamma(\mathcal{L}^p(E))$ is given by $\left\{\mathcal{X}_a(u)=\left(e_a(p(u)),\rho_a^{i}\frac{\partial}{\partial x^{i}}|_u\right),\mathcal{V}_a=\left(0,\frac{\partial}{\partial y^{a}}\right)\right\}$, where $\left\{\frac{\partial}{\partial x^{i}},\frac{\partial}{\partial y^{a}}\right\}$, $i=1,\ldots,n=\dim M$, $a=1\ldots,m={\rm rank}\,E$, is the local basis on $TE$. Then, the vertical and complete lifts, respectively, of a section $s=s^{a}e_a\in\Gamma(E)$ are given by
\begin{displaymath}
s^v=s^{a}\mathcal{V}_a\,,\,s^c=s^{a}\mathcal{X}_a+\left(\rho_E(e_c)(s^{a})-C^{a}_{bc}s^b\right)y^c\mathcal{V}_a.
\end{displaymath}
In particular, $e_a^v=\mathcal{V}_a$ and $e_a^c=\mathcal{X}_a-C^b_{ac}y^c\mathcal{V}_b$. 

Now, if ${\rm rank}\,E=2m$ and $J_E$ is an almost complex structure on the Lie algebroid $(E,\rho_E,[\cdot,\cdot]_E)$, then $J_E^c$ is an almost complex structure on $\left(\mathcal{L}^p(E),\rho_{\mathcal{L}^p(E)},[\cdot,\cdot]_{\mathcal{L}^p(E)}\right)$ (because one of the properties of the complete lift is: for $p(T)$ a polynomial, then $p(T^c)=p(T)^c$), and moreover, $J_{E}^{c}s^{v}=(J_{E}s)^{v}$ and $J_{E}^{c}s^{c}=(J_{E}s)^{c}$. 
\end{example}
\begin{example}
\label{e2.4}
Let us consider the prolongation Lie algebroid $\left(\mathcal{L}^p(E),\rho_{\mathcal{L}^p(E)},[\cdot,\cdot]_{\mathcal{L}^p(E)}\right)$ over $E$ from Example \ref{e2.1}. Let  $\nabla$ a linear connection on the Lie algebroid $E$ (in particular a Riemannian Lie algebroid $(E,g)$ and the Levi-Civita connection, see the next section). Then, the connection $\nabla$ leads to a natural decomposition of $\mathcal{L}^p(E)$ into vertical and horizontal subbundles, namely $\mathcal{L}^p(E)=H\mathcal{L}^p(E)\oplus V\mathcal{L}^p(E)$, where $V\mathcal{L}^p(E)=\mathrm{span}\,\{\mathcal{V}_a\}$ and $H\mathcal{L}^p(E)=\mathrm{span}\,\{\mathcal{H}_a=\mathcal{X}_a-\Gamma^b_{ac}y^c\mathcal{V}_b\}$, where $\Gamma^b_{ac}(x)$ are the local coefficients of the linear
connection $\nabla$. We notice that, the above decomposition can be obtained also by a nonlinear (Ehresmann) connection, see for instance \cite{PoL1}. The horizontal lift $s^h$ of a section $s=s^{a}e_a\in\Gamma(E)$ to $\mathcal{L}^p(E)$ is locally given by 
$s^h=s^{a}\mathcal{H}_a=s^{a}(\mathcal{X}_a-\Gamma^b_{ac}y^c\mathcal{V}_b)$.

Every section $\sigma\in\Gamma(\mathcal{L}^p(E))$ can be written as $\sigma=\sigma^h+\sigma^v$, accordingly to previous decomposition. Then a natural almost complex structure on $\mathcal{L}^p(E)$ is defined by
\begin{displaymath}
J_{\mathcal{L}^p(E)}:\Gamma(\mathcal{L}^p(E))\rightarrow \Gamma(\mathcal{L}^p(E))\,,\,J_{\mathcal{L}^p(E)}(\sigma^h)=-\sigma^v\,,\,J_{\mathcal{L}^p(E)}(\sigma^v)=\sigma^h
\end{displaymath}
and $\left(\mathcal{L}^p(E),\rho_{\mathcal{L}^p(E)},[\cdot,\cdot]_{\mathcal{L}^p(E)}, J_{\mathcal{L}^p(E)}\right)$ is an almost complex Lie algebroid over $E$.
\end{example}

\begin{example}
Let $M$ be a differentiable manifold of dimension $2m + n$ endowed with a
codimension $n$ foliation $\mathcal{F}$ (then the dimension of $\mathcal{F}$  is $2m$). According to \cite{E-K2}, the foliation $\mathcal{F}$ is said to be \textit{complex} if it can be defined by an open cover $\{U_i\},\,i\in I$, of $M$ and diffeomorphisms $\phi_i:\Omega_i\times\mathcal{O}_i\rightarrow U_i$ (where $\Omega_i$ is an open polydisc in $\mathbb{C}^m$ and $\mathcal{O}_i$ is an open ball in $\mathbb{R}^n$) such that, for every pair $(i,j)\in I\times I$ with $U_i\cap U_j\neq \phi$, the coordinate change 
\begin{displaymath}
\phi_{ij}=\phi_j^{-1}\circ\phi_i:\phi_i^{-1}(U_i\cap U_j)\rightarrow\phi_j^{-1}(U_i\cap U_j)
\end{displaymath}
is of the form $(z^{'}, x^{'})=(\phi_{ij}^1(z,x), \phi_{ij}^2(x))$ with $\phi_{ij}^1(z,x)$ holomorphic in $z$ for $x$ fixed.

If we set $z^k=u^k+iv^k$, $k=1,\ldots,m$, then the \textit{almost complex structure along the leaves} $J_{\mathcal{F}}:T\mathcal{F}\rightarrow T\mathcal{F}$, is given by 
$J_{\mathcal{F}}(\frac
\partial {\partial u ^k})=\frac \partial {\partial v ^k}$, $J_{\mathcal{F}}(\frac \partial {\partial v^k})=-\frac \partial
{\partial u^k}$, $k=1,\ldots,m$.

Then $\left(T\mathcal{F},i_{\mathcal{F}},[\cdot,\cdot]_{\mathcal{F}},J_{\mathcal{F}}\right)$ is an almost complex Lie algebroid with anchor the inclusion $i_{\mathcal{F}}:T\mathcal{F}\rightarrow TM$ and the usual Lie bracket $[\cdot,\cdot]_{\mathcal{F}}$ of the vector fields tangent to $\mathcal{F}$.
\end{example}

\begin{example}
\label{e2.5}
(Direct product structure). The direct product of two Lie algebroids $\left(E_1,\rho_{E_1},[\cdot,\cdot]_{E_1}\right)$ over $M_1$ and $\left(E_2,\rho_{E_2},[\cdot,\cdot]_{E_2}\right)$ over $M_2$ is defined in, \cite{Mack2} pg. 155, as a Lie algebroid structure $E_1\times E_2\rightarrow M_1\times M_2$. Let us briefly recall this construction. The general sections of $E_1\times E_2$ are of the form $s=\sum (f_i\otimes s_i^1)\oplus\sum(g_j\otimes s_j^2)$, where $f_i,g_j\in C^\infty(M_1\times M_2)$, $s_i^1\in\Gamma(E_1)$, $s_j^2\in\Gamma(E_2)$, and the anchor map is defined by
\begin{displaymath}
\rho_E\left(\sum (f_i\otimes s_i^1)\oplus\sum(g_j\otimes s_j^2)\right)=\sum(f_i\otimes\rho_{E_1}(s_i^1))\oplus\sum(g_j\otimes\rho_{E_2}(s_j^2)).
\end{displaymath}
Imposing the conditions
\begin{displaymath}
[1\otimes s^1,1\otimes t^1]_E=1\otimes[s^1,t^1]_{E_1}\,,\,[1\otimes s^1,1\otimes t^2]_E=0,
\end{displaymath}
\begin{displaymath}
[1\otimes s^2,1\otimes t^2]_E=1\otimes[s^2,t^2]_{E_2}\,,\,[1\otimes s^2,1\otimes t^1]_E=0,
\end{displaymath}
for every $s^1,t^1\in\Gamma(E_1)$ and $s^2,t^2\in\Gamma(E_2)$, it follows that for $s=\sum (f_i\otimes s_i^1)\oplus\sum(g_j\otimes s_j^2)$ and $s^{\prime}=\sum (f_k^{\prime}\otimes s_k^{\prime 1})\oplus\sum(g^{\prime}_l\otimes s_l^{\prime 2})$, we have, using Leibniz condition, the following expression for bracket on $E=E_1\times E_2$:
\begin{eqnarray*}
[s,s^\prime]_E&=&\left(\sum f_if^\prime_k\otimes[s_i^1,s_k^{\prime 1}]_{E_1}+\sum f_i\rho_{E_1}(s_i^1)(f_k^\prime)\otimes s_k^{\prime 1}-\sum f_k^\prime\rho_{E_1}(s_k^{\prime 1})(f_i)\otimes s_i^1\right)\\
&&\oplus\left(\sum g_jg^\prime_l\otimes[s_j^2,s_l^{\prime 2}]_{E_2}+\sum g_j\rho_{E_2}(s_j^2)(g_l^\prime)\otimes s_l^{\prime 2}-\sum g_l^\prime\rho_{E_2}(s_l^{\prime 2})(g_j)\otimes s_j^2\right).
\end{eqnarray*}
Now, if $E_1$ and $E_2$ are endowed with almost complex structures $J_{E_1}$ and $J_{E_2}$, respectively, then an almost complex structure $J_E$ on $E=E_1\times E_2$ can be defined by 
\begin{equation}
\label{C1}
J_E(s)=\sum (f_i\otimes J_{E_1}(s_i^1))\oplus\sum(g_j\otimes J_{E_2}(s_j^2)).
\end{equation}
\end{example}

Complexifying the real vector bundle $E$ we obtain the complex vector bundle $E_{\mathbb{C}}:=E\otimes_{\mathbb{R}}\mathbb{C}\rightarrow M$ and by extending the anchor map and the Lie bracket $\mathbb{C}$--linearly, we obtain a complex Lie algebroid $(E_{\mathbb{C}},[\cdot,\cdot]_E,\rho_E)$ with the anchor map $\rho_E:\Gamma(E_{\mathbb{C}})\rightarrow\Gamma(TM_{\mathbb{C}})$, that is, a homomorphism of the complexified of corresponding Lie algebras, and $[s_1,fs_2]_E=f[s_1,s_2]_E+\rho_E(s_1)(f)s_2$, for every $s_1,s_2\in\Gamma(E_{\mathbb{C}})$ and $f\in C^\infty(M)_{\mathbb{C}}= C^{\infty}(M)\otimes_{\mathbb{R}}\mathbb{C}$. Also, extending $\mathbb{C}$--linearly the almost complex structure $J_E$, we obtain the almost complex structure $J_E$ on $E_{\mathbb{C}}$.

As usual, we have a splitting
\begin{displaymath}
E_{\mathbb{C}}=E^{1,0}\oplus E^{0,1}
\end{displaymath}
according to the eigenvalues $\pm i$ of $J_E$ on $E_{\mathbb{C}}$. We also have
\begin{equation}
\label{II1}
\Gamma(E^{1,0})=\{s-iJ_Es\,|\,s\in\Gamma(E)\}\,,\,\Gamma(E^{0,1})=\{s+iJ_Es\,|\,s\in\Gamma(E)\}.
\end{equation}
Similarly, we have the splitting
\begin{displaymath}
E^*_{\mathbb{C}}:=E^*\otimes_{\mathbb{R}}\mathbb{C}=(E^{1,0})^*\oplus (E^{0,1})^*
\end{displaymath}
according to the eigenvalues $\pm i$ of $J^*_E$ on $E^*_{\mathbb{C}}$, where $J_E^*$ is the natural almost complex structure induced on $E^*$. We also have
\begin{equation}
\label{II2}
\Gamma((E^{1,0})^*)=\{\omega-iJ^*_E\omega\,|\,\omega\in\Gamma(E^*)\}\,,\,\Gamma((E^{0,1})^*)=\{\omega+iJ^*_E\omega\,|\,\omega\in\Gamma(E^*)\}.
\end{equation}
We set
\begin{displaymath}
\bigwedge^{p,q}(E)=\bigwedge^p(E^{1,0})^*\otimes\bigwedge^q(E^{0,1})^* \,\,{\rm and}\,\,\Omega^{p,q}(E)=\Gamma\left(\bigwedge^{p,q}(E)\right).
\end{displaymath}
Then, the differential $d_E$ of the complex $\Omega^{\bullet}(E)=\bigoplus_{p,q}\Omega^{p,q}(E)$ splits into the sum
\begin{displaymath}
d_E=\partial^\prime_E+\partial_E+\overline{\partial}_E+\partial^{\prime\prime}_E,
\end{displaymath}
where
\begin{displaymath}
\partial^\prime_E:\Omega^{p,q}(E)\rightarrow\Omega^{p+2,q-1}(E)\,,\,\partial_E:\Omega^{p,q}(E)\rightarrow\Omega^{p+1,q}(E),
\end{displaymath}
\begin{displaymath}
\overline{\partial}_E:\Omega^{p,q}(E)\rightarrow\Omega^{p,q+1}(E)\,,\,\partial^{\prime\prime}_E:\Omega^{p,q}(E)\rightarrow\Omega^{p-1,q+2}(E).
\end{displaymath}

For an almost complex Lie algebroid $(E,\rho_E,[\cdot,\cdot]_E, J_E)$ we can consider the Nijenhuis tensor of $J_E$ defined by
\begin{equation}
\label{2.1}
N_{J_E}(s_1,s_2)=[J_Es_1,J_Es_2]_E-J_E[s_1,J_Es_2]_E-J_E[J_Es_1,s_2]_E-[s_1,s_2]_E\,,\,\forall\,s_1,s_2\in\Gamma(E).
\end{equation}

\begin{proposition}
If we consider $\{e^{a}\}$, $a=1,\ldots,2m$ the dual basis of $\{e_{a}\}$, $a=1,\ldots,2m$, then the Nijenhuis tensor $N_{J_E}$ is locally defined by $N_{J_E}=N^c_{ab}e_c\otimes e^{a}\otimes e^b$, and its local coefficients are given by
\begin{eqnarray*}
N^c_{ab}&=&\rho_b^{i}\frac{\partial J_a^d}{\partial x^{i}}J_d^c-\rho_a^{i}\frac{\partial J_b^e}{\partial x^{i}}J_e^c+\rho_d^{i}\frac{\partial J_b^c}{\partial x^{i}}J_a^d-\rho_e^{i}\frac{\partial J_a^c}{\partial x^{i}}J_b^e\\
&&+J_a^dJ_e^cC^{e}_{bd}-J_b^dJ_e^cC^{e}_{ad}+J_b^eJ_a^dC^{c}_{de}-C^c_{ab},
\end{eqnarray*} 
where $J_E=J_a^be_b\otimes e^{a}$ is the local expression of the almost complex structure $J_E$.
\end{proposition}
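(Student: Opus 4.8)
The plan is to reduce the statement to a direct computation in the local frame $\{e_a\}$. First I would observe that $N_{J_E}$ defined by \eqref{2.1} is antisymmetric (immediate from the formula) and $C^\infty(M)$--bilinear: replacing $s_2$ by $fs_2$ and repeatedly using the Leibniz rule ii) of a Lie algebroid, all the $\rho_E$--derivative terms cancel exactly as in the classical case. Hence $N_{J_E}$ is a tensor of type $(2,1)$, so it is determined by its values $N_{J_E}(e_a,e_b)=N^c_{ab}e_c$ on the basis, and the task is just to compute these coefficients.

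Next I would expand each of the four terms of \eqref{2.1} with $s_1=e_a$, $s_2=e_b$, using $J_E e_a=J_a^b e_b$, the structure relations $\rho_E(e_a)=\rho_a^i\partial/\partial x^i$ and $[e_a,e_b]_E=C^c_{ab}e_c$ from \eqref{I2}, and the Leibniz rule in the form $[fe_a,e_b]_E=f[e_a,e_b]_E-\rho_b^i(\partial f/\partial x^i)e_a$. The term $-[e_a,e_b]_E$ gives $-C^c_{ab}e_c$. For $-J_E[J_Ee_a,e_b]_E$ I first get $[J_a^d e_d,e_b]_E=J_a^d C^e_{db}e_e-\rho_b^i(\partial J_a^d/\partial x^i)e_d$ and then apply $-J_E$; similarly for $-J_E[e_a,J_Ee_b]_E$, starting from $[e_a,J_b^d e_d]_E=J_b^d C^e_{ad}e_e+\rho_a^i(\partial J_b^d/\partial x^i)e_d$. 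The remaining term $[J_Ee_a,J_Ee_b]_E=[J_a^d e_d,J_b^e e_e]_E$ needs the Leibniz rule in both slots and produces $J_a^d J_b^e C^c_{de}e_c$ together with the two anchor--derivative terms $\rho_d^i(\partial J_b^c/\partial x^i)J_a^d\,e_c$ and $-\rho_e^i(\partial J_a^c/\partial x^i)J_b^e\,e_c$.

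Finally I would add the four contributions with the signs prescribed by \eqref{2.1}, relabel the dummy indices $d,e$ so that the free index $c$ always sits on $e_c$, and use the antisymmetry $C^e_{db}=-C^e_{bd}$ from \eqref{I3} to match the sign of the $J_a^d J_e^c C^c_{bd}$ term; this yields exactly the claimed formula for $N^c_{ab}$. There is no conceptual obstacle here: the only real care needed is the bookkeeping of the (many) dummy indices and of the four sign patterns coming from \eqref{2.1}, together with checking in the first step that the test-function derivatives genuinely cancel, so that writing $N_{J_E}=N^c_{ab}e_c\otimes e^a\otimes e^b$ is legitimate.
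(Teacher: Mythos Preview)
Your proposal is correct and is exactly the standard direct computation the paper has in mind; in fact the paper states this proposition without proof, treating it as a routine local calculation, and your outline---checking $C^\infty(M)$--bilinearity of $N_{J_E}$ and then expanding \eqref{2.1} on basis sections via \eqref{I2} and the Leibniz rule---is precisely that calculation carried out. There is nothing to add beyond the index bookkeeping you already flag.
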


\begin{definition}
An almost complex structure $J_E$ on the Lie algebroid $(E,\rho_E,[\cdot,\cdot]_E)$ of ${\rm rank}\,E=2m$ is called \textit{integrable} if $N_{J_E}=0$.
\end{definition}

Now, using a standard procedure from almost complex geometry, \cite{G-O,Hs,Ya}, we can prove the following Newlander-Nirenberg type theorem:
\begin{theorem}
\label{t1}
For an almost complex Lie algebroid $(E,\rho_E,[\cdot,\cdot]_E, J_E)$ over a smooth manifold $M$ the following assertions are equivalent:
\begin{enumerate}
\item[(i)] If $s_1,s_2\in\Gamma(E^{1,0})$ then $[s_1,s_2]_E\in\Gamma(E^{1,0})$;
\item[(ii))] If $s_1,s_2\in\Gamma(E^{0,1})$ then $[s_1,s_2]_E\in\Gamma(E^{0,1})$;
\item[(iii)] $d_E\Omega^{1,0}(E)\subset \Omega^{2,0}(E)+\Omega^{1,1}(E)$ and $d_E\Omega^{0,1}(E)\subset \Omega^{1,1}(E)\oplus\Omega^{0,2}(E)$;
\item[(iv)] $d_E\Omega^{p,q}(E)\subset \Omega^{p+1,q}(E)+\Omega^{p,q+1}(E)$;
\item[(v)] the real Nijenhuis $N_{J_E}$ from \eqref{2.1} vanish, namely $J_E$ is integrable.
\end{enumerate}
\end{theorem}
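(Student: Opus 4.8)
The plan is to prove the two chains of equivalences $(i)\Leftrightarrow(ii)\Leftrightarrow(v)$ and $(ii)\Leftrightarrow(iii)\Leftrightarrow(iv)$, throughout extending $[\cdot,\cdot]_E$, $\rho_E$ and $J_E$ $\mathbb{C}$--linearly to $E_{\mathbb{C}}$ as before \eqref{II1}. The equivalence $(i)\Leftrightarrow(ii)$ is immediate: complex conjugation on $E_{\mathbb{C}}$ interchanges $E^{1,0}$ and $E^{0,1}$ by \eqref{II1}, and it commutes with $[\cdot,\cdot]_E$ because the latter is the $\mathbb{C}$--linear extension of a real bracket, so $\overline{[s_1,s_2]_E}=[\,\overline{s_1},\overline{s_2}\,]_E$; applying this to $(ii)$ gives $(i)$ and conversely.

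For $(ii)\Leftrightarrow(v)$ I would argue on sections. By \eqref{II1} every section of $E^{0,1}$ has the form $Z_j=s_j+iJ_Es_j$ with $s_j\in\Gamma(E)$, and $E^{0,1}$ is the $(-i)$--eigenbundle of $J_E$, so $Z\in\Gamma(E^{0,1})$ if and only if $P^{1,0}Z=0$, where $P^{1,0}=\frac{1}{2}(\mathrm{id}-iJ_E)$ is the projection onto $E^{1,0}$. Expanding $[Z_1,Z_2]_E$ by $\mathbb{C}$--bilinearity, separating real and imaginary parts, and simplifying with $J_E^2=-\mathrm{id}$ and the definition \eqref{2.1} of $N_{J_E}$, one obtains the key identity
\begin{displaymath}
P^{1,0}[Z_1,Z_2]_E=-P^{1,0}\bigl(N_{J_E}(s_1,s_2)\bigr),\qquad s_1,s_2\in\Gamma(E).
\end{displaymath}
Since $N_{J_E}$ is a real $C^\infty(M)$--bilinear tensor (as shown by the Proposition above), $P^{1,0}N_{J_E}\equiv 0$ forces $P^{0,1}N_{J_E}=\overline{P^{1,0}N_{J_E}}\equiv 0$ as well, hence $N_{J_E}=0$; conversely $N_{J_E}=0$ makes the right--hand side vanish, so $[\Gamma(E^{0,1}),\Gamma(E^{0,1})]_E\subset\Gamma(E^{0,1})$. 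This yields $(ii)\Leftrightarrow(v)$.

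For $(ii)\Leftrightarrow(iii)$ I would use the degree--one instance of \eqref{I11}, extended $\mathbb{C}$--linearly: $d_E\omega(t_1,t_2)=\rho_E(t_1)(\omega(t_2))-\rho_E(t_2)(\omega(t_1))-\omega([t_1,t_2]_E)$. If $\omega\in\Omega^{1,0}(E)=\Gamma((E^{1,0})^*)$ then $\omega$ annihilates $\Gamma(E^{0,1})$, so for $t_1,t_2\in\Gamma(E^{0,1})$ we get $d_E\omega(t_1,t_2)=-\omega([t_1,t_2]_E)$; since forms of type $(2,0)$ and $(1,1)$ vanish on $\bigwedge^2E^{0,1}$, the $(0,2)$--component of $d_E\omega$ vanishes for all such $\omega$ precisely when $[t_1,t_2]_E$ has no $E^{1,0}$--part, i.e. precisely when $(ii)$ holds, which is exactly $d_E\Omega^{1,0}(E)\subset\Omega^{2,0}(E)+\Omega^{1,1}(E)$. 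Symmetrically, with $\omega\in\Omega^{0,1}(E)=\Gamma((E^{0,1})^*)$ (which annihilates $\Gamma(E^{1,0})$) and $t_1,t_2\in\Gamma(E^{1,0})$, vanishing of the $(2,0)$--component of $d_E\omega$ is equivalent to $(i)$; since $(i)\Leftrightarrow(ii)$, the second inclusion in $(iii)$ is also equivalent to $(ii)$, so $(iii)\Leftrightarrow(ii)$. Finally, $(iv)\Rightarrow(iii)$ is the special cases $(p,q)=(1,0),(0,1)$, while $(iii)\Rightarrow(iv)$ follows from the antiderivation property of $d_E$: $d_Ef\in\Omega^{1,0}(E)\oplus\Omega^{0,1}(E)$ automatically for $f\in C^\infty(M)_{\mathbb{C}}$, any $(p,q)$--form is locally a $C^\infty(M)_{\mathbb{C}}$--combination of wedges of $p$ local $(1,0)$--forms and $q$ local $(0,1)$--forms, and applying $d_E$ factor by factor together with $(iii)$ keeps the result in $\Omega^{p+1,q}(E)+\Omega^{p,q+1}(E)$.

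The main obstacle is the identity in the second paragraph: correctly isolating the $E^{1,0}$--part of $[Z_1,Z_2]_E$ and matching it, up to the projector $P^{1,0}$ and a sign, with $N_{J_E}(s_1,s_2)$ — this is where $J_E^2=-\mathrm{id}$ and the precise form of \eqref{2.1} must be used carefully — together with the bookkeeping of $(p,q)$--types of the arguments fed to $d_E$; once the conjugation symmetry and the antiderivation property of $d_E$ are in hand, everything else is formal.
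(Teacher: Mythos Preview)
Your proposal is correct and follows exactly the standard route the paper alludes to (the paper gives no detailed proof, only the remark ``using a standard procedure from almost complex geometry, \cite{G-O,Hs,Ya}''). Your key identity $P^{1,0}[Z_1,Z_2]_E=-P^{1,0}N_{J_E}(s_1,s_2)$ for $Z_j=s_j+iJ_Es_j$ checks out against \eqref{2.1}, the conjugation argument for $(i)\Leftrightarrow(ii)$ is the right one, and the use of the degree--one formula \eqref{I11} together with the antiderivation property of $d_E$ for $(ii)\Leftrightarrow(iii)\Leftrightarrow(iv)$ is the standard mechanism.
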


The above Newlander-Nirenberg type theorem says that for any integrable almost complex structure $J_E$ on an almost complex Lie algebroid $(E,\rho_E,[\cdot,\cdot]_E)$ we have the usual decomposition
\begin{displaymath}
d_E=\partial_E+\overline{\partial}_E.
\end{displaymath}
From $d_E^2=d_E\circ d_E=0$ we obtain the following identities:
\begin{equation}
\label{II5}
\partial_E^2=\overline{\partial}_E^2=\partial_E\overline{\partial}_E+\overline{\partial}_E\partial_E=0.
\end{equation}
Hence, in this case we obtain a Dolbeault type Lie algebroid cohomology as the cohomology of the complex $(\Omega^{p,\bullet}(E),\overline{\partial}_E)$.

\begin{remark}
In \cite{B-R} the integrability of the almost complex structure $J_E$ is defined in order to obtain a reduction to a holomorphic Lie algebroid which is equivalent to vanishing of a suitable Nijenhuis tensor on $E$.
\end{remark}

\begin{example}
Let $\left( E^{\prime },\rho _{E^{\prime }},[\cdot ,\cdot ]_{E^{\prime}}\right) $ be a Lie algebroid over a base manifold $M^{\prime }$ and $(E,\rho _{E},$ $[\cdot ,\cdot ]_{E})$ be a Lie subalgebroid. Let us denote as $i:M\rightarrow M^{\prime }$ and $I:E\rightarrow E^{\prime \prime }=i^{\ast}E^{\prime}$ be the inclusions; notice that $i$ is an inclusion submanifold
and $I$ is the incusion of a vector subbundle over $M$. Consider a projector $\Pi $ in the fibers of $E^{\prime\prime}$ such that the image of $\Pi$ is $E$, i.e. $\Pi \left( E^{\prime \prime }\right) =E$. A such projector $\Pi $ can be the ortogonal projection according to a Riemannian metric $g^{\prime \prime }$ in the fibers of $E^{\prime \prime }$, that can be
induced particularly by a metric $g^{\prime \prime }$ in the fibers of $E^{\prime }$. We can define an anchor $\rho _{E^{\prime \prime }}=\rho_{E}\circ \Pi :E^{\prime \prime }\rightarrow TM$ and a $\rho _{E^{\prime\prime }}$--bracket 
\begin{equation}
\lbrack s_1^{\prime \prime },s_2^{\prime \prime }]_{E^{\prime \prime }}=[\Pi
\left( s_1^{\prime \prime }\right) ,\Pi \left( s_2^{\prime \prime }\right)
]_{E},\,\forall\,s_1^{\prime \prime },s_2^{\prime \prime }\in \Gamma \left(
E^{\prime \prime }\right) .  \label{form-br}
\end{equation}

It is easy to check that $\left( E^{\prime \prime },\rho _{E^{\prime \prime}},[\cdot ,\cdot ]_{E^{\prime \prime }}\right) $ is a Lie algebroid. Indeed, if $s_1^{\prime \prime },s_2^{\prime \prime }\in \Gamma \left( E^{\prime \prime
}\right) $, then 

\begin{eqnarray*}
\rho _{E^{\prime \prime }}\left( [s_1^{\prime \prime },s_2^{\prime\prime }]_{E^{\prime \prime }}\right) &=&\rho _{E}\circ \Pi ([\Pi \left(s_1^{\prime \prime }\right) ,\Pi \left( s_2^{\prime \prime }\right) ]_{E})=\rho _{E}([\Pi \left( s_1^{\prime \prime }\right) ,\Pi \left( s_2^{\prime \prime}\right) ]_{E})\\
&=&[\rho _{E}\circ \Pi \left( s_1^{\prime \prime }\right)
,\rho _{E}\circ \Pi \left( s_2^{\prime \prime }\right) ]_{TM}=[\rho
_{E^{\prime \prime }}\left( s_1^{\prime \prime }\right) ,\rho _{E^{\prime\prime }}\left( s_2^{\prime \prime }\right) ]_{TM},
\end{eqnarray*} 
where $[\cdot ,\cdot ]_{TM}$ denotes the usual Lie bracket. 

Also, $\sum\limits_{cycl.}[[s_1^{\prime \prime },s_2^{\prime \prime
}]_{E^{\prime \prime }},s_3^{\prime \prime }]_{E^{\prime \prime }}=\sum\limits_{cycl.}[[\Pi \left( s_1^{\prime \prime }\right) ,\Pi \left(s_2^{\prime \prime }\right) ]_{E},\Pi \left(s_3^{\prime \prime }\right) ]_{E}=0$, since $[\cdot ,\cdot ]_{E}$ is a bracket of a Lie algebroid.

If $J_{E^{\prime }}$ is an almost complex structure in the fibers of $E^{\prime }$, then its restriction to $M$ gives an almost complex structure $J_{E^{\prime \prime }}$ in the fibers of $E^{\prime \prime }$.

If $s_1^{\prime \prime },s_2^{\prime \prime }\in \Gamma \left( E^{\prime \prime}\right) $, then for every $s_1^{\prime },s_2^{\prime }\in \Gamma \left(E^{\prime }\right) $ that extend $s_1^{\prime\prime}$ and $s_2^{\prime\prime}$, respectively, then the
restriction of $[s_1^{\prime },s_2^{\prime }]_{E^{\prime }}$ to $M$ does not depend on the extensions and it defines a section in $\Gamma \left(E^{\prime \prime }\right) $, that we denote as $[s_1^{\prime \prime},s_2^{\prime \prime }]_{E^{\prime }}^{\prime }$; it is not a bracket, but a restriction of $[\cdot ,\cdot ]_{E^{\prime }}$ to $M$. We say that the projection $\Pi $ is \emph{flat} if $[\Pi \left( s_1^{\prime \prime }\right),\Pi \left( s_2^{\prime \prime }\right) ]_{E}=\Pi \left( \lbrack s_1^{\prime\prime },s_2^{\prime \prime }]_{E^{\prime }}^{\prime }\right) $. 

Let us suppose that the projection $\Pi $ is flat. If $J_{E^{\prime }}$ is integrable and $\Pi\circ J_{E^{\prime\prime}}=J_{E^{\prime\prime}}\circ\Pi$  then $J_{E^{\prime \prime }}$ is integrable as well. Indeed, 
\begin{eqnarray*}
N_{J_{E^{\prime\prime }}}(s_1^{\prime \prime },s_2^{\prime \prime })&=&[J_{E^{\prime \prime}}s_1^{\prime \prime },J_{E^{\prime \prime }}s_2^{\prime \prime }]_{E^{\prime\prime }}-[s_1^{\prime \prime },s_2^{\prime \prime }]_{E^{\prime \prime}}-J_{E^{\prime\prime}}[J_{E^{\prime \prime }}s_1^{\prime \prime },s_2^{\prime \prime }]_{E^{\prime\prime }}-J_{E^{\prime\prime}}[s_1^{\prime \prime },J_{E^{\prime \prime }}s_2^{\prime \prime}]_{E^{\prime \prime }}\\
&=&[\Pi J_{E^{\prime \prime }}s_1^{\prime \prime },\Pi J_{E^{\prime \prime }}s_2^{\prime \prime }]_{E}-[\Pi s_1^{\prime \prime },\Pi s_2^{\prime \prime }]_{E}-J_{E^{\prime\prime}}[\Pi J_{E^{\prime \prime }}s_1^{\prime \prime },\Pi s_2^{\prime \prime }]_{E}-J_{E^{\prime\prime}}[\Pi s_1^{\prime \prime },\Pi J_{E^{\prime \prime}}s_2^{\prime \prime }]_{E}\\
&=&\Pi ([J_{E^{\prime \prime }}s_1^{\prime \prime },J_{E^{\prime \prime }}s_2^{\prime \prime }]_{E^{\prime }}^{\prime }-[s_1^{\prime \prime },s_2^{\prime \prime }]_{E^{\prime }}^{\prime }-J_{E^{\prime\prime}}[J_{E^{\prime \prime }}s_1^{\prime \prime },s_2^{\prime \prime }]_{E^{\prime}}^{\prime }-J_{E^{\prime\prime}}[s_1^{\prime \prime },J_{E^{\prime \prime }}s_2^{\prime\prime }]_{E^{\prime }}^{\prime })\\
&=&\Pi ([J_{E^{\prime }}s_1^{\prime },J_{E^{\prime }}s_2^{\prime }]_{E^{\prime }}-[s_1^{\prime },s_2^{\prime}]_{E^{\prime }}-J_{E^{\prime}}[J_{E^{\prime }}s_1^{\prime },s_2^{\prime }]_{E^{\prime}}-J_{E^{\prime}}[s_1^{\prime },J_{E^{\prime }}s_2^{\prime }]_{E^{\prime }})_{|M}\\
&=&N_{J_{E^{\prime }}}\left( s_1^{\prime },s_2^{\prime }\right) _{|M}=0,
\end{eqnarray*}
where $s_1^{\prime \prime },s_2^{\prime \prime }\in \Gamma \left( E^{\prime \prime}\right) $ are the restrictions to $M$ of $s_1^{\prime },s_2^{\prime }\in \Gamma\left( E^{\prime }\right) $ and we have used that $N_{J_{E^{\prime}}}=0$.

An example of such a case is when the algebroids are the tangent spaces $E=TS^{2n-1}\subset E^{\prime }=TI\!\!R^{2n}$  and $\Pi $ is the orthogonal projection according to the euclidean metric (see \cite{Po2}). It can be easily proved that $\Pi $ is flat, thus the canonical complex structure on $I\!\!R^{2n}$ gives an integrable almost complex structure on the algebroid $E=S^{2n-1}\times I\!\!R^{2n}$. The anchor of $E$ is the orthogonal projection on the tangent hyperplane, along on $S^{2n-1}$, and the bracket is given by a similar formula (\ref{form-br}).

\end{example}

We notice that in more situations in this paper we will consider the case when $J_E$ is integrable.

Let us consider now $(E,\rho_E,[\cdot,\cdot]_E, J_E)$ and $(E^\prime,\rho_{E^\prime},[\cdot,\cdot]_{E^\prime},J_{E^\prime})$ be two almost complex Lie algebroids over a smooth manifold $M$. 

A $M$--morphism $\varphi$ of almost complex Lie algebroids is naturally extended by $\mathbb{C}$--linearity to  $\varphi:(E_{\mathbb{C}},\rho_E,J_E)\rightarrow(E^\prime_{\mathbb{C}},\rho_{E^\prime},J_{E^\prime})$ over $M$ and it is called \textit{almost complex} if 
\begin{equation}
\label{II6}
\varphi\circ J_E=J_{E^\prime}\circ\varphi.
\end{equation}

\begin{proposition}
If $\varphi:(E,\rho_E,J_E)\rightarrow(E^\prime,\rho_{E^\prime},J_{E^\prime})$ is a $M$--morphism of almost complex Lie algebroids over $M$, then the following assertions are equivalent:
\begin{enumerate}
\item[(i)] If $s_1\in\Gamma(E^{1,0})$ then $\varphi(s_1)\in\Gamma(E^{\prime 1,0})$;
\item[(ii)] If $s_1\in\Gamma(E^{0,1})$ then $\varphi(s_1)\in\Gamma(E^{\prime 0,1})$;
\item[(iii)] If $\omega^{\prime}\in\Omega^{p,q}(E^{'})$ then $\varphi^*\omega^{\prime}\in\Omega^{p,q}(E)$, where
\begin{displaymath}
(\varphi^*\omega^\prime)(s_1,\ldots,s_p,t_1,\ldots,t_q)=\omega^\prime(\varphi(s_1),\ldots,\varphi(s_p),\varphi(t_1),\ldots,\varphi(t_q))
\end{displaymath}
for any $s_1,\ldots,s_p\in\Gamma(E^{1,0})$ and $t_1,\ldots,t_q\in\Gamma(E^{0,1})$.
\item[(iv)] The morphism $\varphi$ is almost complex.
\end{enumerate}
\end{proposition}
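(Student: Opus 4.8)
The plan is to establish the chain of implications (iv) $\Rightarrow$ (i) $\Rightarrow$ (ii) $\Rightarrow$ (iv) together with the equivalence (iv) $\Leftrightarrow$ (iii). The key algebraic input is the explicit description of the $\pm i$-eigenbundles from \eqref{II1}, namely $\Gamma(E^{1,0})=\{s-iJ_Es\,|\,s\in\Gamma(E)\}$ and $\Gamma(E^{0,1})=\{s+iJ_Es\,|\,s\in\Gamma(E)\}$, and the fact that a section $\sigma\in\Gamma(E_{\mathbb{C}})$ lies in $\Gamma(E^{1,0})$ (resp. $\Gamma(E^{0,1})$) precisely when $J_E\sigma=i\sigma$ (resp. $J_E\sigma=-i\sigma$). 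Since $\varphi$ is $\mathbb{C}$-linear and a morphism of anchored vector bundles, all three conditions will be checked by applying $\varphi$ to such eigenvector representatives and using the compatibility relation \eqref{II6}.

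First I would prove (iv) $\Rightarrow$ (i): if $s_1\in\Gamma(E^{1,0})$ then $J_Es_1=is_1$, so $J_{E'}(\varphi(s_1))=\varphi(J_Es_1)=\varphi(is_1)=i\varphi(s_1)$ by \eqref{II6} and $\mathbb{C}$-linearity, whence $\varphi(s_1)\in\Gamma(E^{\prime 1,0})$. The implication (i) $\Rightarrow$ (ii) follows by conjugation: complex conjugation interchanges $E^{1,0}$ and $E^{0,1}$ (and likewise on $E'$), and since $\varphi$ is the complexification of a real bundle map it commutes with conjugation, so $\varphi(\overline{s_1})=\overline{\varphi(s_1)}$; applying this to $s_1\in\Gamma(E^{0,1})$, one has $\overline{s_1}\in\Gamma(E^{1,0})$, hence $\varphi(\overline{s_1})\in\Gamma(E^{\prime 1,0})$ by (i), and conjugating back gives $\varphi(s_1)\in\Gamma(E^{\prime 0,1})$. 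For (ii) $\Rightarrow$ (iv) one decomposes an arbitrary $s\in\Gamma(E)$ as $s=\tfrac12(s+iJ_Es)+\tfrac12(s-iJ_Es)$, a sum of an $E^{0,1}$-section and an $E^{1,0}$-section; applying $\varphi$ and using (ii) for the first summand and the conjugate of (ii) (equivalently (i), obtained from (ii) by the same conjugation argument) for the second, one computes $\varphi(J_Es)$ and checks it equals $J_{E'}\varphi(s)$ directly on each eigencomponent, since $\varphi$ maps each eigencomponent into the corresponding eigenbundle on which $J_{E'}$ acts as $\pm i$.

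For the equivalence with (iii), I would argue (iv) $\Rightarrow$ (iii) as follows: an element $\omega'\in\Omega^{p,q}(E')$ is characterized by vanishing whenever more than $p$ of its arguments lie in $\Gamma(E^{\prime 1,0})$ or more than $q$ lie in $\Gamma(E^{\prime 0,1})$; feeding $\varphi^*\omega'$ the arguments $s_1,\dots,s_p\in\Gamma(E^{1,0})$ and $t_1,\dots,t_q\in\Gamma(E^{0,1})$ and using (i) and (ii) shows $\varphi(s_k)\in\Gamma(E^{\prime 1,0})$ and $\varphi(t_l)\in\Gamma(E^{\prime 0,1})$, so $\varphi^*\omega'$ has the correct bidegree and the remaining bidegree components vanish. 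Conversely, for (iii) $\Rightarrow$ (iv), I would test (iii) against suitably chosen $(1,0)$-forms on $E'$: if $\varphi$ failed to be almost complex, then for some local section $s$ the image $\varphi(s-iJ_Es)$ of an $E^{1,0}$-section would have a nonzero $E^{\prime 0,1}$-component, and pairing with an element of $\Omega^{0,1}(E')$ that detects that component would produce a $(1,0)$-form on $E'$ whose pullback is not of bidegree $(1,0)$, a contradiction; so (iii) forces $\varphi(\Gamma(E^{1,0}))\subset\Gamma(E^{\prime 1,0})$, which is (i), and we already have (i) $\Rightarrow$ (iv).

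The main obstacle I anticipate is purely bookkeeping rather than conceptual: making the argument (iii) $\Rightarrow$ (i) precise requires choosing, pointwise or on a trivializing neighborhood, a $(1,0)$-covector on $E'$ that separates the $E^{\prime 1,0}$ and $E^{\prime 0,1}$ components of a given image vector, and checking that the resulting form genuinely lies in $\Omega^{1,0}(E')$ rather than some mixed space; the subtlety is that one must exhibit a $\emph{global}$ or at least well-defined object, which one handles by the standard device of using a partition of unity or by noting that the bidegree conditions are pointwise-checkable. Everything else reduces to linear algebra in the fibers of $E_{\mathbb{C}}$ together with the identities \eqref{II1}, \eqref{II2} and the hypothesis \eqref{II6}, and I would present those computations compactly.
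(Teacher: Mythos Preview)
Your proposal is correct and is precisely the ``standard argument from almost complex geometry'' that the paper invokes without giving any details. One small slip: in your sketch of (iii) $\Rightarrow$ (iv), the form $\omega'$ you choose lies in $\Omega^{0,1}(E')$, so what you obtain is a $(0,1)$-form on $E'$ whose pullback fails to be of bidegree $(0,1)$ (not a $(1,0)$-form as written), but the logic is otherwise sound.
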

\begin{proof}
It follows using a standard argument from the almost complex geometry.
\end{proof}

If we consider $\{e_a\}$, $a=1,\ldots,m$ be a local basis of $\Gamma(E^{1,0})$ and $\{e_{\overline{b}}=\overline{e_b}\}$, $b=1,\ldots,m$ be a local basis of $\Gamma(E^{0,1})$, then we have
\begin{displaymath}
[e_a,e_b]_E=C^c_{ab}e_c+C^{\overline{c}}_{ab}\overline{e_c}\,,\,[e_a,e_{\overline{b}}]_E=C^c_{a\overline{b}}e_c+C^{\overline{c}}_{a\overline{b}}\overline{e_c},
\end{displaymath}
\begin{displaymath}
[e_{\overline{a}},e_b]_E=C^c_{\overline{a}b}e_c+C^{\overline{c}}_{\overline{a}b}\overline{e_c}\,,\,[e_{\overline{a}},e_{\overline{b}}]_E=C^c_{\overline{a}\,\overline{b}}e_c+C^{\overline{c}}_{\overline{a}\,\overline{b}}\overline{e_c},
\end{displaymath}
where $\overline{C^{a}_{bc}}=C^{\overline{a}}_{\overline{b}\,\overline{c}}$, $\overline{C^{\overline{c}}_{ab}}=C^{c}_{\overline{a}\,\overline{b}}$, $\overline{C^{c}_{\overline{a}b}}=C^{\overline{c}}_{a\overline{b}}$ and $\overline{C^{c}_{a\overline{b}}}=C^{\overline{c}}_{\overline{a}b}$, since $\overline{[s_1,s_2]_E}=[\overline{s_1},\overline{s_2}]_E$, for every $s_1,s_2\in\Gamma(E_{\mathbb{C}})$ and they are antisymmetric in below indices.

\begin{remark}
By Newlander-Nirenberg theorem, if $J_E$ is integrable then $C^{\overline{c}}_{ab}=C^c_{\overline{a}\,\overline{b}}=0$.
\end{remark}

\begin{definition}
Let $(E,\rho_E,[\cdot,\cdot]_E, J_E)$ be an almost complex Lie algebroid over $M$. A $M$-diffeomorphism $\varphi:(E,\rho_E,J_E)\rightarrow(E,\rho_E,J_{E})$ such that $\varphi\circ J_E=J_E\circ\varphi$ is called an \textit{automorphism} of the almost complex structure $J_E$. 
\end{definition}
\begin{definition}
\label{infinitezimal}
A section $s_1\in\Gamma(E)$ is called an \textit{infinitesimal automorphism} of $J_E$ if and only if
\begin{equation}
\label{II7}
[s_1,J_E(s_2)]_E-J_E\left([s_1,s_2]_E\right)=0,
\end{equation}
for any section $s_2\in\Gamma(E)$. 
\end{definition}
It is easy to see that the set of all infinitesimal automorphisms of $J_E$ is a Lie subalgebra of the Lie algebra of sections of $E$. Also, the following remarks hold:
\begin{remark}
If $s_1$ is an infinitesimal automorphism of $J_E$ then
\begin{displaymath}
N_{J_E}(s_1,s_2)=[J_E(s_1),J_E(s_2)]_E-J_E([J_E(s_1),s_2]_E)\,,\,\,\forall\,s_2\in\Gamma(E).
\end{displaymath}
\end{remark}
\begin{remark}
If $N_{J_E}=0$ then $s$ and $J_E(s)$ are simultaneously infinitesimal automorphisms of $J_E$.
\end{remark}
Extending the Definition \ref{infinitezimal} for the sections of $E_{\mathbb{C}}$ with respect to the almost complex $J_E$ on $E_{\mathbb{C}}$, we obtain
\begin{proposition}
Let $\{e_a\}$, $a=1,\ldots,m$ be a local basis of $E^{1,0}$. If all this sections are infinitesimal automorhisms of $J_E$ on $E_{\mathbb{C}}$, then the sections $\{e_{\overline{b}}\}$, $b=1,\ldots,m$, that give a local basis of $E^{0,1}$, are also infinitesimal automorphisms of $J_E$ on $E_{\mathbb{C}}$.
\end{proposition}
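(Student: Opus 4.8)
The plan is to exploit the reality structure on $E_{\mathbb C}$ together with the $\mathbb C$-linear extension of $J_E$. Recall that by hypothesis $e_a\in\Gamma(E^{1,0})$, so $J_E e_a = i e_a$, and each $e_a$ satisfies $[e_a,J_E(t)]_E - J_E([e_a,t]_E)=0$ for all $t\in\Gamma(E_{\mathbb C})$. I want to deduce the same identity for $e_{\bar b}=\overline{e_b}$. Since the $\{e_{\bar b}\}$ span $\Gamma(E^{0,1})$ over $C^\infty(M)_{\mathbb C}$ and condition \eqref{II7} is $C^\infty(M)$-tensorial in $s_2$ (indeed the left side of \eqref{II7} is $C^\infty(M)$-linear in $s_2$ by the Leibniz rule, the extra terms cancelling), it suffices to check \eqref{II7} with $s_1 = e_{\bar b}$ against a basis $s_2$ running over $\{e_c\}\cup\{e_{\bar c}\}$; and by additivity it is even enough to handle arbitrary $s_2\in\Gamma(E_{\mathbb C})$.

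The key step is conjugation. For the section $e_b\in\Gamma(E^{1,0})$ we are given, for every $s\in\Gamma(E_{\mathbb C})$,
\begin{equation}
\label{II7plan}
[e_b,J_E(s)]_E - J_E([e_b,s]_E)=0.
\end{equation}
Apply complex conjugation to \eqref{II7plan}. Using $\overline{[u,v]_E}=[\bar u,\bar v]_E$ (stated in the excerpt) and the fact that the $\mathbb C$-linear extension of $J_E$ commutes with conjugation — because $J_E$ is a real endomorphism, i.e. $\overline{J_E(s)}=J_E(\bar s)$ — we get
\begin{displaymath}
[\,\overline{e_b}\,,\,J_E(\bar s)\,]_E - J_E([\,\overline{e_b}\,,\bar s\,]_E)=0.
\end{displaymath}
Now let $s$ range over all of $\Gamma(E_{\mathbb C})$; then $\bar s$ also ranges over all of $\Gamma(E_{\mathbb C})$, so writing $t=\bar s$ this says precisely that $e_{\bar b}=\overline{e_b}$ satisfies \eqref{II7} for every $t\in\Gamma(E_{\mathbb C})$, i.e. $e_{\bar b}$ is an infinitesimal automorphism of $J_E$ on $E_{\mathbb C}$.

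The only point requiring a word of justification — and the mild obstacle — is the compatibility $\overline{J_E(s)}=J_E(\bar s)$ of the $\mathbb C$-linear extension with the conjugation: this is immediate since $J_E$ is defined over $\mathbb R$ and extended $\mathbb C$-linearly, so on $s = u+iv$ with $u,v\in\Gamma(E)$ one has $J_E(\overline{s})=J_E(u)-iJ_E(v)=\overline{J_E(u)+iJ_E(v)}=\overline{J_E(s)}$. With this observed, the argument above is complete; no structure equations or local coefficient computations are needed, only the reality of $J_E$, the conjugation rule for the bracket, and the tensorial character of \eqref{II7} in its second slot.
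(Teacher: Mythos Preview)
Your argument is correct. Both your proof and the paper's rest on the same idea---complex conjugation---but you apply it at the level of the defining identity \eqref{II7} itself, whereas the paper works through the structure constants: from $[e_a,J_E(e_b)]_E=J_E([e_a,e_b]_E)$ and $[e_a,J_E(e_{\overline b})]_E=J_E([e_a,e_{\overline b}]_E)$ it extracts $C^{\overline c}_{ab}=0$ and $C^{c}_{a\overline b}=0$, then conjugates these to get $C^{c}_{\overline a\,\overline b}=C^{\overline c}_{\overline a b}=0$, and finally reassembles the infinitesimal-automorphism condition for $e_{\overline a}$ against $e_b$ and $e_{\overline b}$. Your route is cleaner and coordinate-free, relying only on the reality of $J_E$ and of the bracket; the paper's route, while more laborious, has the minor side benefit of recording explicitly which structure constants vanish. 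Your remark on the $C^\infty(M)$-linearity of \eqref{II7} in $s_2$ is correct but, as you note, not actually needed for your conjugation argument.
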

\begin{proof}
If $\{e_a\}$, $a=1,\ldots,m$ are infinitesimal automorphisms of $J_E$ on $E_{\mathbb{C}}$, then from $[e_a,J_E(e_b)]_E=J_E([e_a,e_b]_E)$ we obtain $C^{\overline{c}}_{ab}=0$ and from $[e_a,J_E(e_{\overline{b}})]_E=J_E([e_a,e_{\overline{b}}]_E)$ we obtain $C^c_{a\overline{b}}=0$. Then, by conjugation, we have $C^{c}_{\overline{a}\,\overline{b}}=C^{\overline{c}}_{\overline{a}b}=0$ which implies $[e_{\overline{a}},J_E(e_b)]_E=J_E([e_{\overline{a}},e_b]_E)$ and $[e_{\overline{a}},J_E(e_{\overline{b}})]_E=J_E([e_{\overline{a}},e_{\overline{b}}]_E)$.
\end{proof}

In the end of this subsection we prove that if $J_E$ is an integrable almost complex structure on $E$ then $(E^{1,0},E^{0,1})$ is a matched pairs of complex Lie algebroids. The notion of matched pairs of Lie algebroids was introduced in \cite{Lu} and further studied \cite{Mack3, Mo} and other authors.
\begin{definition}
\label{dmp}
A \textit{matched pair} of Lie algebroids is a pair of (complex or real) Lie algebroids $E_1$ and $E_2$ over the same base manifold $M$, where $E_2$ is an $E_1$-module and $E_1$ is an $E_2$-module such that the following identities hold:
\begin{equation}
\label{mp1}
\left[\rho_{E_1}(s),\rho_{E_2}(t)\right]=-\rho_{E_1}\left(\nabla_ts\right)+\rho_{E_2}(\nabla_st),
\end{equation}
\begin{equation}
\label{mp2}
\nabla_s[t_1,t_2]_{E_2}=\left[\nabla_st_1,t_2\right]_{E_2}+\left[t_1,\nabla_st_2\right]_{E_2}+\nabla_{\nabla_{t_2}s}t_1-\nabla_{\nabla_{t_1}s}t_2,
\end{equation}
\begin{equation}
\label{mp3}
\nabla_t[s_1,s_2]_{E_1}=\left[\nabla_ts_1,s_2\right]_{E_1}+\left[s_1,\nabla_ts_2\right]_{E_1}+\nabla_{\nabla_{s_2}t}s_1-\nabla_{\nabla_{s_1}t}s_2,
\end{equation}
where $s,s_1,s_2\in\Gamma(E_1)$ and $t,t_1,t_2\in\Gamma(E_2)$. Here $\rho_{E_1}$ and $\rho_{E_2}$ are the anchor maps of $E_1$ and $E_2$, respectively, and $\nabla$ denotes both $E_1$-connection on $E_2$ and $E_2$-connection on $E_1$, respectively
\begin{displaymath}
\Gamma(E_1)\times\Gamma(E_2)\rightarrow\Gamma(E_2)\,,\,(s,t)\mapsto\nabla_st\,\,{\rm and}\,\,\Gamma(E_2)\times\Gamma(E_1)\rightarrow\Gamma(E_1)\,,\,(t,s)\mapsto\nabla_ts.
\end{displaymath}
\end{definition}
Let $(E,\rho_E,[\cdot,\cdot]_E,J_E)$ be an almost complex Lie algebroid over a smooth manifold $M$ such that $J_E$ is integrable and $E_{\mathbb{C}}=E^{1,0}\oplus E^{0,1}$ its complexification. We consider the natural projections $p^{1,0}:E_{\mathbb{C}}\rightarrow E^{1,0}$ and $p^{0,1}:E_{\mathbb{C}}\rightarrow E^{0,1}$ from $E_{\mathbb{C}}$ onto $E^{1,0}$ and $E^{0,1}$, respectively, given by
\begin{equation}
\label{m1}
p^{1,0}=\frac{1}{2}(I_E-iJ_E)\,,\,p^{0,1}=\frac{1}{2}(I_E+iJ_E).
\end{equation}
We have
\begin{proposition}
\label{pmp}
If $(E,\rho_E,[\cdot,\cdot]_E,J_E)$ is an almost complex Lie algebroid over a smooth manifold $M$ such that $J_E$ is integrable then $(E^{1,0},E^{0,1})$ is a matched pair, where the actions are given by
\begin{displaymath}
\nabla_{s^{0,1}}s^{1,0}=p^{1,0}\left[s^{0,1},s^{1,0}\right]_{E}\,\,{\rm and}\,\,\nabla_{s^{1,0}}s^{0,1}=p^{0,1}\left[s^{1,0},s^{0,1}\right]_E
\end{displaymath}
for every $s^{1,0}\in\Gamma(E^{1,0})$ and $s^{0,1}\in\Gamma(E^{0,1})$. 
\end{proposition}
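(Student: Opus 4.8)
The plan is to verify the three defining identities \eqref{mp1}, \eqref{mp2}, \eqref{mp3} of Definition \ref{dmp} directly, with $E_1=E^{1,0}$, $E_2=E^{0,1}$, and the given actions. The crucial preliminary observation is that, because $J_E$ is integrable, the Newlander--Nirenberg theorem (Theorem \ref{t1}) gives that $\Gamma(E^{1,0})$ and $\Gamma(E^{0,1})$ are each closed under $[\cdot,\cdot]_E$. Hence, for $s\in\Gamma(E^{1,0})$ and $t\in\Gamma(E^{0,1})$ we may split the mixed bracket as
\begin{equation}
\label{eq:split}
[t,s]_E = p^{1,0}[t,s]_E + p^{0,1}[t,s]_E = \nabla_t s - \nabla_s t,
\end{equation}
using that $[s,t]_E=-[t,s]_E$ so $p^{0,1}[t,s]_E=-p^{0,1}[s,t]_E=-\nabla_s t$. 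This identity \eqref{eq:split} is the workhorse for everything that follows, and it already shows that $\nabla$ is an $E^{0,1}$-connection on $E^{1,0}$ and an $E^{1,0}$-connection on $E^{0,1}$: the Leibniz rules $\nabla_{ft}s = f\nabla_t s$ and $\nabla_t(fs)=\rho_E(t)(f)s+f\nabla_t s$ follow at once from the Leibniz rule for $[\cdot,\cdot]_E$ and the $C^\infty(M)$-linearity of the projectors $p^{1,0},p^{0,1}$ (which are $C^\infty(M)$-linear since $J_E$ is). One should remark that here $C^\infty(M)$ is understood complexified, and $\rho_E$ is the $\mathbb{C}$-linear extension.

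For \eqref{mp1}: apply $\rho_E$ to \eqref{eq:split}. Since $\rho_E$ is a Lie algebra homomorphism on $\Gamma(E_{\mathbb C})$, $\rho_E([t,s]_E)=[\rho_E(t),\rho_E(s)]$, and \eqref{eq:split} gives $\rho_E([t,s]_E)=\rho_{E^{1,0}}(\nabla_t s)-\rho_{E^{0,1}}(\nabla_s t)$, where $\rho_{E^{1,0}},\rho_{E^{0,1}}$ denote the restrictions of $\rho_E$; rearranging yields exactly \eqref{mp1} after renaming $(s,t)$. For \eqref{mp2} and \eqref{mp3}: these are the two Jacobi-type identities, and by the symmetry of the construction (conjugation swaps $E^{1,0}\leftrightarrow E^{0,1}$ and the two actions) it suffices to prove one, say \eqref{mp3}, with $t\in\Gamma(E^{0,1})$, $s_1,s_2\in\Gamma(E^{1,0})$. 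The idea is to expand the Jacobi identity for $[\cdot,\cdot]_E$ applied to the triple $t,s_1,s_2$,
\begin{displaymath}
\big[[t,s_1]_E,s_2\big]_E + \big[[s_1,s_2]_E,t\big]_E + \big[[s_2,t]_E,s_1\big]_E = 0,
\end{displaymath}
then project onto $E^{1,0}$ using $p^{1,0}$, and substitute the splitting \eqref{eq:split} for each mixed bracket wherever it occurs (noting $[s_1,s_2]_E\in\Gamma(E^{1,0})$ so $[[s_1,s_2]_E,t]_E$ is again mixed). After collecting terms, $p^{1,0}\big[[s_1,s_2]_E,t\big]_E = -\nabla_t[s_1,s_2]_{E^{1,0}}$ appears, and the remaining projected terms reorganize into $[\nabla_t s_1,s_2]_{E^{1,0}}+[s_1,\nabla_t s_2]_{E^{1,0}}+\nabla_{\nabla_{s_2}t}s_1-\nabla_{\nabla_{s_1}t}s_2$; one also needs the bracket-closure of $\Gamma(E^{1,0})$ to identify certain $p^{1,0}$-projections of brackets of $(1,0)$-sections with the honest $E^{1,0}$-bracket.

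The main obstacle is the bookkeeping in this last step: each mixed bracket splits into a $(1,0)$ and a $(0,1)$ piece, so the three Jacobi terms produce a fair number of summands, and one must carefully track which pieces survive the projection $p^{1,0}$ and how the ``inner'' brackets $\nabla_{s_i}t$ (which land in $E^{0,1}$) then re-enter as arguments of $\nabla$. The key facts that make it close are: (a) $p^{1,0}$ is $C^\infty(M)_{\mathbb C}$-linear and a genuine projection, (b) integrability, via Theorem \ref{t1}, guarantees $[\Gamma(E^{1,0}),\Gamma(E^{1,0})]_E\subset\Gamma(E^{1,0})$ and likewise for $E^{0,1}$, so that pure brackets need no projection, and (c) $\nabla$ as defined is exactly the ``off-diagonal'' part of the single bracket $[\cdot,\cdot]_E$, so every identity is a shadow of the corresponding identity for $[\cdot,\cdot]_E$. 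Once \eqref{mp3} is established, \eqref{mp2} follows by applying complex conjugation (which is an involutive real automorphism of $E_{\mathbb C}$ intertwining the two module structures), completing the verification that $(E^{1,0},E^{0,1})$ is a matched pair.
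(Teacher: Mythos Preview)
Your proof is correct and follows essentially the same strategy as the paper: both arguments reduce the matched-pair axioms to the Jacobi identity for $[\cdot,\cdot]_E$ on a mixed triple (one section from $E^{0,1}$, two from $E^{1,0}$, or the conjugate situation), together with the integrability condition $[\Gamma(E^{1,0}),\Gamma(E^{1,0})]_E\subset\Gamma(E^{1,0})$ supplied by Theorem~\ref{t1}. The only difference is presentational: the paper carries out the verification in the local bases $\{e_a\}$, $\{\overline{e_a}\}$ using the structure constants (so that integrability reads $C^{\overline c}_{ab}=C^{c}_{\overline a\,\overline b}=0$, and \eqref{mp2}, \eqref{mp3} come from $\sum_{(a,\overline b,\overline c)}[e_a,[e_{\overline b},e_{\overline c}]_E]_E=0$ and its conjugate), whereas you work invariantly via the splitting $[t,s]_E=\nabla_t s-\nabla_s t$ and project the global Jacobi identity by $p^{1,0}$. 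Your coordinate-free treatment is cleaner and makes the mechanism more transparent; the paper's local computation has the minor advantage of simultaneously exhibiting the Lie algebroid structures on $E^{1,0}$ and $E^{0,1}$ explicitly via \eqref{mp4}--\eqref{mp6}, a point you leave implicit.
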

\begin{proof}
It is sufficient to verify the Definition \ref{dmp} in the local bases $\{e_a\}$, $a=1\ldots,m$ of $\Gamma(E^{1,0})$ and $\{\overline{e_a}\}$, $a=1\ldots,m$ of $\Gamma(E^{0,1})$. If $J_E$ is integrable we have $C^{\overline{c}}_{ab}=C^{c}_{\overline{a}\,\overline{b}}=0$. Let us consider $\rho^{1,0}:E^{1,0}\rightarrow T_{\mathbb{C}}M$, $\rho^{0,1}:E^{0,1}\rightarrow T_{\mathbb{C}}M$ given by 
\begin{equation}
\label{mp4}
\rho^{1,0}=\frac{1}{2}(\rho_E-i\rho_E\circ J_E)\,,\,\rho^{0,1}=\frac{1}{2}(\rho_E+i\rho_E\circ J_E),
\end{equation}
and $[\cdot,\cdot]^{1,0}:\Gamma(E^{1,0})\times\Gamma(E^{1,0})\rightarrow\Gamma(E^{1,0})$, $[\cdot,\cdot]^{0,1}:\Gamma(E^{0,1})\times\Gamma(E^{0,1})\rightarrow\Gamma(E^{0,1})$ given by
\begin{equation}
\label{mp5}
[\cdot,\cdot]^{1,0}=\frac{1}{2}([\cdot,\cdot]_E-iJ_E\circ [\cdot,\cdot]_E)\,,\,[\cdot,\cdot]^{0,1}=\frac{1}{2}([\cdot,\cdot]_E+iJ_E\circ [\cdot,\cdot]_E).
\end{equation}
Then locally, we have
\begin{equation}
\label{mp6}
\rho^{1,0}(e_a)=\rho_E(e_a)\,,\,\rho^{0,1}(\overline{e_a})=\rho_E(\overline{e_a})\,,\,[e_a,e_b]^{1,0}=[e_a,e_b]_E\,,\,[e_{\overline{a}},e_{\overline{b}}]^{0,1}=[e_{\overline{a}},e_{\overline{b}}]_E
\end{equation}
and using the Lie algebroid structure of $E$ it follows that $\left(E^{1,0},\rho^{1,0},[\cdot,\cdot]^{1,0}\right)$ and $\left(E^{0,1},\rho^{0,1},[\cdot,\cdot]^{0,1}\right)$ are complex Lie algebroids over $M$.

Now, the fact that $(E^{1,0},E^{0,1})$ is matched pair follows by direct verification of the Definition \ref{dmp} in the local bases $\{e_a\}$ and $\{\overline{e_a}\}$, where for \eqref{mp2} and \eqref{mp3} we use the Jacobi identity
$\sum\limits_{(a,\overline{b},\overline{c})}\left[e_a,\left[e_{\overline{b}},e_{\overline{c}}\right]_E\right]_E=0$ and its conjugate.
\end{proof}

\subsection{Almost complex Lie algebroids over almost complex manifolds}
In \cite{B-R} is given the definition of almost complex Lie algebroids over almost complex manifolds, which generalizes the definition of almost complex Poisson manifolds, see \cite{C-F-I-U}. In this subsection we consider the almost complex Lie algebroids over almost complex manifolds and in the particular case when the base manifold is complex,  we describe some properties of the structure functions of the associated complex Lie algebroid.

Consider $M$ a $2n$--dimensional almost complex manifold with an almost complex structure $J_M:\Gamma(TM)\rightarrow \Gamma(TM)$ and $(E,\rho_E,[\cdot,\cdot]_E)$ be a Lie algebroid over $M$ with ${\rm rank}\,E=2m$.
\begin{definition}
\cite{B-R} An \textit{almost complex structure $J_E$} on $(E,\rho_E,[\cdot,\cdot]_E)$ is an endomorphism $J_E:\Gamma(E)\rightarrow \Gamma(E)$ such that $J_E^2=-{\rm id_{\Gamma(E)}}$ and $J_M\circ\rho_E=\rho_E\circ J_E$.
\end{definition}
A real Lie algebroid $(E,\rho_E,[\cdot,\cdot]_E)$ endowed with such a structure will be called \textit{almost complex Lie algebroid over $(M,J_M)$}.

We notice that by the relation $\rho_E\circ J_E=J_M\circ\rho_E$ the complexified anchor map $\rho_E:\Gamma(E_{\mathbb{C}})\rightarrow\Gamma(TM_{\mathbb{C}})$ has the properties: $\rho_E(E^{1,0})\subset\Gamma(T^{1,0}M)$ and $\rho_E(E^{0,1})\subset\Gamma(T^{0,1}M)$, where $TM_{\mathbb{C}}=T^{1,0}M\oplus T^{0,1}M$ is the complexified of the real tangent bundle $TM$, and $T^{1,0}M={\rm span}\,\{Z_i\}$ and $T^{0,1}M={\rm span}\,\{\overline{Z_i}\}$ are the tangent bundles of complex vector of type $(1,0)$ and of type $(0,1)$, respectively. Then we can write
\begin{displaymath}
\rho_E(e_a)=\rho_a^{i}(z)Z_i\,,\,\rho_E(\overline{e_a})=\rho_{\overline{a}}^{\overline{i}}(z)\overline{Z_i},
\end{displaymath}
where $\{e_a\}$, $a=1,\ldots,m$ is a local basis of $\Gamma(E^{1,0})$ and $\{e_{\overline{b}}=\overline{e_b}\}$, $b=1,\ldots,m$ is a local basis of $\Gamma(E^{0,1})$.

If $(M,J_M)$ is a complex manifold, then we can consider the complex coordinates $(z^1,\ldots,z^n)$ on $M$ and then $T^{1,0}M={\rm span}\,\{\frac{\partial}{\partial z^{i}}\}$ and $T^{0,1}M={\rm span}\,\{\frac{\partial}{\partial\overline{z}^{i}}\}$ are the holomorphic and antiholomorphic tangent subbundles, respectively. Then, the above relations becomes 
\begin{displaymath}
\rho_E(e_a)=\rho_a^{i}(z)\frac{\partial}{\partial z^{i}}\,,\,\rho_E(\overline{e_a})=\rho_{\overline{a}}^{\overline{i}}(z)\frac{\partial}{\partial \overline{z}^{i}}.
\end{displaymath}
\begin{proposition}
If $(M,J_M)$ is a complex manifold, then the structure functions of the complex Lie algebroid $(E_{\mathbb{C}},[\cdot,\cdot]_E,\rho_E)$ over $(M,J_M)$ satisfy the following relations:
\begin{displaymath}
\rho^j_a\frac{\partial\rho^{i}_b}{\partial z^j}-\rho^j_b\frac{\partial\rho^{i}_a}{\partial z^j}=\rho^{i}_cC^c_{ab}\,,\,C^{\overline{c}}_{ab}\rho_{\overline{c}}^{\overline{i}}=0\,,\,C^c_{a\overline{b}}\rho_c^{i}=-\rho_{\overline{b}}^{\overline{j}}\frac{\partial \rho_a^{i}}{\partial\overline{z}^j}\,,\,C^{\overline{c}}_{a\overline{b}}\rho_{\overline{c}}^{\overline{i}}=\rho_a^j\frac{\partial\rho_{\overline{b}}^{\overline{i}}}{\partial z^j},
\end{displaymath}
\begin{displaymath}
\rho^{\overline{j}}_{\overline{a}}\frac{\partial\rho^{\overline{i}}_{\overline{b}}}{\partial \overline{z}^j}-\rho^{\overline{j}}_{\overline{b}}\frac{\partial\rho^{\overline{i}}_{\overline{a}}}{\partial \overline{z}^j}=\rho^{\overline{i}}_{\overline{c}}C^{\overline{c}}_{\overline{a}\,\overline{b}}\,,\,C^{c}_{\overline{a}\,\overline{b}}\rho_{c}^{i}=0\,,\,C^{\overline{c}}_{\overline{a}b}\rho_{\overline{c}}^{\overline{i}}=-\rho_{b}^{j}\frac{\partial \rho_{\overline{a}}^{\overline{i}}}{\partial z^j}\,,\,C^{c}_{\overline{a}b}\rho_{c}^{i}=\rho_{\overline{a}}^{\overline{j}}\frac{\partial\rho_{b}^{i}}{\partial \overline{z}^j}.
\end{displaymath}
 \end{proposition}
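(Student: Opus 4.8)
The plan is to write out the anchor homomorphism property --- condition (i) in the definition of a Lie algebroid, extended $\mathbb{C}$--linearly to $E_{\mathbb{C}}$ --- on each of the four types of bracket $[e_a,e_b]_E$, $[e_a,e_{\overline{b}}]_E$, $[e_{\overline{a}},e_{\overline{b}}]_E$ and $[e_{\overline{a}},e_b]_E$, and to compare, on the two sides of each resulting identity, the components along $\frac{\partial}{\partial z^{i}}$ with those along $\frac{\partial}{\partial\overline{z}^{i}}$.

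First I would record the two ingredients the hypothesis provides. Since $(E,\rho_E,[\cdot,\cdot]_E,J_E)$ is an almost complex Lie algebroid over $(M,J_M)$, the relation $\rho_E\circ J_E=J_M\circ\rho_E$ gives $\rho_E(E^{1,0})\subset\Gamma(T^{1,0}M)$ and $\rho_E(E^{0,1})\subset\Gamma(T^{0,1}M)$, so that $\rho_E(e_a)=\rho_a^{i}\frac{\partial}{\partial z^{i}}$ and $\rho_E(\overline{e_a})=\rho_{\overline{a}}^{\overline{i}}\frac{\partial}{\partial\overline{z}^{i}}$, exactly as in the discussion preceding the statement. And since $(M,J_M)$ is a complex manifold, the coordinate vector fields commute: $\left[\frac{\partial}{\partial z^{i}},\frac{\partial}{\partial z^{j}}\right]=\left[\frac{\partial}{\partial z^{i}},\frac{\partial}{\partial\overline{z}^{j}}\right]=\left[\frac{\partial}{\partial\overline{z}^{i}},\frac{\partial}{\partial\overline{z}^{j}}\right]=0$.

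Next I would apply $\rho_E$ to $[e_a,e_b]_E=C^c_{ab}e_c+C^{\overline{c}}_{ab}\overline{e_c}$. Using that $\rho_E$ is a Lie algebra homomorphism, the image equals $\left[\rho_a^{i}\frac{\partial}{\partial z^{i}},\rho_b^{j}\frac{\partial}{\partial z^{j}}\right]$, which --- because the $\frac{\partial}{\partial z^{i}}$ commute among themselves --- reduces to $\left(\rho_a^{j}\frac{\partial\rho_b^{i}}{\partial z^{j}}-\rho_b^{j}\frac{\partial\rho_a^{i}}{\partial z^{j}}\right)\frac{\partial}{\partial z^{i}}$, so that only holomorphic derivatives of the coefficients appear. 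On the other hand $\rho_E([e_a,e_b]_E)=C^c_{ab}\rho_c^{i}\frac{\partial}{\partial z^{i}}+C^{\overline{c}}_{ab}\rho_{\overline{c}}^{\overline{i}}\frac{\partial}{\partial\overline{z}^{i}}$. Matching the $\frac{\partial}{\partial z^{i}}$--components then yields the first relation $\rho^j_a\frac{\partial\rho^{i}_b}{\partial z^j}-\rho^j_b\frac{\partial\rho^{i}_a}{\partial z^j}=\rho^{i}_cC^c_{ab}$, while matching the $\frac{\partial}{\partial\overline{z}^{i}}$--components yields $C^{\overline{c}}_{ab}\rho_{\overline{c}}^{\overline{i}}=0$. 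Then I would run the same computation on $[e_a,e_{\overline{b}}]_E=C^c_{a\overline{b}}e_c+C^{\overline{c}}_{a\overline{b}}\overline{e_c}$, where $\left[\rho_a^{i}\frac{\partial}{\partial z^{i}},\rho_{\overline{b}}^{\overline{j}}\frac{\partial}{\partial\overline{z}^{j}}\right]=\rho_a^{j}\frac{\partial\rho_{\overline{b}}^{\overline{i}}}{\partial z^{j}}\frac{\partial}{\partial\overline{z}^{i}}-\rho_{\overline{b}}^{\overline{j}}\frac{\partial\rho_a^{i}}{\partial\overline{z}^{j}}\frac{\partial}{\partial z^{i}}$; comparing components gives the remaining two identities of the first line, namely $C^c_{a\overline{b}}\rho_c^{i}=-\rho_{\overline{b}}^{\overline{j}}\frac{\partial \rho_a^{i}}{\partial\overline{z}^{j}}$ and $C^{\overline{c}}_{a\overline{b}}\rho_{\overline{c}}^{\overline{i}}=\rho_a^{j}\frac{\partial\rho_{\overline{b}}^{\overline{i}}}{\partial z^{j}}$.

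The second line of identities I would obtain either by applying complex conjugation to the first line --- using $\overline{\rho_a^{i}}=\rho_{\overline{a}}^{\overline{i}}$, $\overline{C^c_{ab}}=C^{\overline{c}}_{\overline{a}\,\overline{b}}$, $\overline{C^{\overline{c}}_{ab}}=C^c_{\overline{a}\,\overline{b}}$, $\overline{C^c_{a\overline{b}}}=C^{\overline{c}}_{\overline{a}b}$ together with $\overline{\frac{\partial}{\partial z^{i}}}=\frac{\partial}{\partial\overline{z}^{i}}$ --- or, equivalently, by repeating the argument verbatim on $[e_{\overline{a}},e_{\overline{b}}]_E$ and $[e_{\overline{a}},e_b]_E$. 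I do not expect a genuine obstacle here beyond the index bookkeeping and sign tracking; the one subtlety to keep in view is that $\frac{\partial}{\partial z^{j}}$ differentiates only in holomorphic directions and $\frac{\partial}{\partial\overline{z}^{j}}$ only in antiholomorphic ones, so that in each bracket exactly one of $\frac{\partial}{\partial z^{j}}$, $\frac{\partial}{\partial\overline{z}^{j}}$ occurs, which is precisely what produces the asymmetric shape of the four identities. (The first identity is, of course, just the $\mathbb{C}$--linear extension of the first structure equation in \eqref{I3}.)
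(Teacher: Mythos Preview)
Your proposal is correct and follows exactly the approach the paper indicates: the paper's proof is the single sentence ``Follows by direct calculations in the relation $\rho_E\left([e_a,e_b]_E\right)=[\rho_E(e_a),\rho_E(e_b)]$ and similarly for the other sections of $E_{\mathbb{C}}$,'' and you have simply written out those direct calculations in full, matching $\partial/\partial z^{i}$-- and $\partial/\partial\overline{z}^{i}$--components on both sides for each type of bracket.
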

\begin{proof}
Follows by direct calculations in the relation $\rho_E\left([e_a,e_b]_E\right)=[\rho_E(e_a),\rho_E(b)]$ and similarly for the other sections of $E_{\mathbb{C}}$.
\end{proof}

\begin{remark}
\label{rem 3.2}
Taking into account the relations $\rho_E\circ J_E=J_M\circ\rho_E$ and $\rho_E([s_1,s_2]_E)=[\rho_E(s_1),\rho_E(s_2)]$ we obtain
\begin{equation}
\label{II4}
\rho_E\left(N_{J_E}(s_1,s_2)\right)=N_{J_M}(\rho_E(s_1),\rho_E(s_2)),
\end{equation}
which says that if the almost complex Lie algebroid $(E,\rho_E,[\cdot,\cdot]_E, J_E)$ over $(M,J_M)$ is transitive and $J_E$ is integrable then $J_M$ is integrable, that is $M$ is a complex manifold.
\end{remark}

It is well known, see \cite{N-W,Va3}, that there is an one-to-one correspondence between Lie algebroid structures on the vector bundle $p:E\rightarrow M$ and specific Poisson structures on the total space of the corresponding dual bundle $E^*\rightarrow M$, called the \textit{dual Poisson structures}, defined by the following brackets of basic and fiber-linear functions (with respect to foliation by fibres of $E^*$):
\begin{equation}
\label{Pos1}
\{p^*f_1,p^*f_2\}=0\,,\,\{p^*f,l_s\}=-p^*(\rho_E(s)f)\,,\,\{l_{s_1},l_{s_2}\}=l_{[s_1,s_2]_E},
\end{equation}
where $f,f_1,f_2\in C^\infty(M)$, $s,s_1,s_2\in\Gamma(E)$ and $l_s$ is the evaluation of the fiber of $E^*$ on $s$. If $(x^{i})$ are local coordinates on $M$ and $(\zeta_a)$ are the fiber coordinates on $E^*$ with respect to the dual of a local basis $\{e_{a}\}$ of sections of $E$, then the corresponding Poisson bivector field is given by
\begin{equation}
\label{Pos2}
\Lambda=\frac{1}{2}C^{a}_{bc}\zeta_a\frac{\partial}{\partial\zeta_b}\wedge\frac{\partial}{\partial\zeta_c}+\rho_b^{i}\frac{\partial}{\partial\zeta_b}\wedge\frac{\partial}{\partial x^{i}}.
\end{equation}
Conversely, formulas \eqref{Pos2} together with \eqref{I2} produce an anchor $\rho_E$ and a bracket $[\cdot,\cdot]_E$ and the Poisson condition $[\Lambda,\Lambda]=0$ implies the Lie algebroid axioms. 

Now if $(M,J_M)$ is an almost complex manifold with $\dim M=2n$ and $(E,J_E)$ is an almost complex Lie algebroid over $(M,J_M)$, ${\rm rank}\,E=2m$, then the total space of $E^*$  has a natural structure of almost complex manifold with almost complex structure $\mathcal{J}$ induced naturally by $J_M$ and $J_E$. In this case, the real Poisson bivector field $\Lambda$ from \eqref{Pos2} admits the decomposition $\Lambda=\Lambda^{2,0}+\Lambda^{1,1}+\overline{\Lambda^{2,0}}$, according to bigraduation $\mathcal{V}^r(E^*,\mathcal{J})=\bigoplus_{p+q=r}\mathcal{V}^{p,q}(E^*,\mathcal{J})$ of multivector fields on $(E^*,\mathcal{J})$. A natural question to ask is if $\Lambda^{2,0}\in\mathcal{V}^{2,0}(E^*,\mathcal{J})$ defines an almost complex Poisson structure on $(E^*,\mathcal{J})$, that is  $[\Lambda^{2,0},\Lambda^{2,0}]=0$ and $[\Lambda^{2,0},\overline{\Lambda^{2,0}}]=0$. If $\Lambda^{1,1}=0$ then it is true when $\mathcal{J}$ is integrable or otherwise, if moreover the real bivector field $\Lambda^{\prime}=i(\Lambda^{2,0}-\overline{\Lambda^{2,0}})$ is also real Poisson, (see Proposition 3.2 from \cite{C-F-I-U}).

\section{Almost Hermitian Lie algebroids}
\setcounter{equation}{0}
In this section we make a general approach about almost Hermitian Lie algebroids and sectional curvature of K\"{a}hlerian Lie algebroids over smooth manifolds in relation with corresponding notions from the geometry of almost complex manifolds.

Firstly we notice that if the vector bundle $(E,p,M)$ is endowed with a Riemannian metric $g$ then, according to \cite{Bo, C-M}, there exists an unique linear connection $D$ in the Lie algebroid $(E,\rho_E,[\cdot,\cdot]_E)$ such that
$D$ is compatible with $g$ and it is torsion free. It is given by the formula
\begin{equation}
\begin{array}{ll}
2g_E(D_{s_1}s_2, s_3)=\rho _E(s_1)(g_E(s_2, s_3))+\rho _E(s_2)(g_E(s_1, s_3))-\rho _E(s_3)(g_E(s_1, s_2)) \\
\,\,\,\,\,\,\,\,\,\,\,\,\,\,\,\,\,\,\,\,\,\,\,\,\,\,\,\,\,\,\,\,\,\,\,\,\,\,\,\,\,\,\,\,\,\,+g_E([s_3,s_1]_E,s_2)+g_E([s_3,s_2]_E,s_1)+g_E([s_1,s_2]_E,s_3).
\end{array}
\label{III4}
\end{equation}
and its local coefficients are given  by
\begin{equation}
\label{III5}
\Gamma^{a}_{bc}=\frac{1}{2}g^{ad}\left(\rho_E(e_b)(g_{cd})+\rho_E(e_c)(g_{bd})-\rho_E(e_d)(g_{bc})+C^{e}_{dc}g_{eb}+C^{e}_{db}g_{ec}-C^{e}_{bc}g_{ed}\right),
\end{equation}
where $g_{ab}=g(e_a,e_b)$ and $(g^{ba})$ is the inverse matrix of $(g_{ab})$. The connection $D$ given by \eqref{III4} is called the Levi-Civita connection of $(E,\rho_E,[\cdot,\cdot]_E)$ endowed with a Riemannian metric $g$.

In the following we consider an almost complex Lie algebroid $(E,\rho_E,[\cdot,\cdot]_E,J_E)$ over a smooth manifold $M$.
\begin{definition}
A Hermitian metric on an almost complex Lie algebroid $(E,\rho_E,[\cdot,\cdot]_E, J_E)$ is a Riemannian metric $g$ on $E$ invariant by $J_E$, that is
\begin{equation}
g(J_E(s_1),J_E(s_2))=g(s_1,s_2)\,,\,\forall\,s_1,s_2\in\Gamma(E)
\label{VII1}
\end{equation}
and an almost complex Lie algebroid $(E,\rho_E,[\cdot,\cdot]_E, J_E)$ endowed with a Hermitian metric $g$ is called \textit{almost Hermitian Lie algebroid} and denote  $(E,\rho_E,[\cdot,\cdot]_E, J_E, g)$. Moreover if $N_{J_E}=0$ then it is called \textit{Hermitian Lie algebroid}.
\end{definition}
We notice that if an almost complex Lie algebroid $(E,\rho_E,[\cdot,\cdot]_E, J_E)$ admits a Riemannian metric $g$, then a Hermitian metric on $E$ can be usually defined by $h(s_1,s_2)=g(s_1,s_2)+g(J_E(s_1),J_E(s_2))$ for every $s_1,s_2\in\Gamma(E)$.

\begin{example}
\label{e3.1}
The complete lift $g^c$ to $\mathcal{L}^p(E)$ of almost Hermitian metric $g$ on $E$ is defined by
$g^c(s_1^c,s_2^c)=(g(s_1,s_2))^c$, and it is easy to see that $g^c(J_E^c(s_1^c),J_E^c(s_2^c))=g^c(s_1^c,s_2^c)$, that is $g^c$ is almost Hermitian metric on $\mathcal{L}^p(E)$ too with respect to $J_E^c$. Furthermore, since 
\begin{equation*}
[s_1^v,s_2^v]_{\mathcal{L}^p(E)}=0\,,\,[s_1^v,s_2^c]_{\mathcal{L}^p(E)}=[s_1,s_2]^v_{E}\,,\,[s_1^c,s_2^c]_{\mathcal{L}^p(E)}=[s_1,s_2]^c_{E},
\end{equation*}
see \cite{Ma}, it follows $N_{J_{E}^{c}}=\left(N_{J_{E}}\right) ^{c}$. Thus $(E,\rho_E,[\cdot,\cdot]_E, J_E,g)$ is a Hermitian Lie algebroid iff $(\mathcal{L}^p(E),\rho_{\mathcal{L}^p(E)},[\cdot,\cdot]_{\mathcal{L}^p(E)}, J^c_E,g^c)$ is also Hermitian.
\end{example}
\begin{example}
\label{e3.2}
Let us consider the almost complex Lie algberoid $\left(\mathcal{L}^p(E),\rho_{\mathcal{L}^p(E)},[\cdot,\cdot]_{\mathcal{L}^p(E)}, J_{\mathcal{L}^p(E)}\right)$  from Example \ref{e2.4}. Then, we can define the Sasaki type metric $g_{\mathcal{L}^p(E)}$ on $\mathcal{L}^p(E)$ by
\begin{equation*}
g_{\mathcal{L}^p(E)}(s_1^h,s_2^h)=g_{\mathcal{L}^p(E)}(s_1^v,s_2^v)=(g_E(s_1,s_2))^v\,,\,g_{\mathcal{L}^p(E)}(s_1^h,s_2^v)=0,
\end{equation*}
for every $s_1,s_2\in\Gamma(E)$ and it is easy to see that this metric is Hermitian with respect to $J_{\mathcal{L}^p(E)}$. Moreover since the Levi-Civita connection of the Riemannian metric $g$ on  $E$ is torsion free, then $J_{\mathcal{L}^p(E)}$ is integrable iff the horizontal bundle $H\mathcal{L}^p(E)$ is integrable.
\end{example}
\begin{example}
\label{e3.3}
If $(E_1,\rho_{E_1},[\cdot,\cdot]_{E_1}, J_{E_1}, g_1)$ and $(E_2,\rho_{E_2},[\cdot,\cdot]_{E_2}, J_{E_2}, g_2)$ are two almost Hermitian Lie algebroids then the metric 
\begin{eqnarray*}
&&g\left(\left(\sum(f_i\otimes s_i^1)\oplus\sum(g_j\otimes s_j^2)\right),\sum (f_k^{\prime}\otimes s_k^{\prime 1})\oplus\sum(g^{\prime}_l\otimes s_l^{\prime 2})\right)\\
&&=\sum f_if_k^\prime\otimes g_{1}(s_i^1,s_k^{\prime 1})\oplus\sum g_jg_l^{\prime}\otimes g_{2}(s_j^2,s_{l}^{\prime 2})
\end{eqnarray*}
is a Hermitian metric on  the almost complex Lie algebroid given by direct product $E=E_1\times E_2$ from Example \ref{e2.5} with respect to almost complex structure $J_E$ given by \eqref{C1}.
\end{example}

A Hermitian metric $g$ on an almost complex Lie algebroid $(E,\rho_E,[\cdot,\cdot]_E, J_E)$ defines a $2$--form $\Phi\in\Omega^2(E)$ by 
\begin{equation}
\Phi(s_1,s_2)=g(s_1,J_E(s_2))\,,\,s_1,s_2\in\Gamma(E).
\label{VII3}
\end{equation}
Indeed, is easy to see that $\Phi(s_1,s_2)=-\Phi(s_2,s_1)$. Since $g$ is invariant with respect to $J_E$ we obtain that $\Phi$ is also invariant by $J_E$, namely $\Phi(J_E(s_1),J_E(s_2))=\Phi(s_1,s_2)$.

Since the Hermitian metric $g$ is a particular case of a Riemannian metric on an almost complex Lie algebroid $(E,\rho_E,[\cdot,\cdot]_E, J_E)$, we can consider the associated Levi-Civita connection.

Using the calculus on Lie algebroids,  by a similar argument as in geometry of almost Hermitian manifolds we have:
\begin{proposition}
\label{pIII1}
Let $(E,\rho_E,[\cdot,\cdot]_E, J_E,g)$ be an almost Hermitian Lie algebroid, $N_{J_E}$ the Nijenhuis tensor of $J_E$, $\Phi$ the $2$-form associated to $g$ and $D$ the Levi-Civita connection associated to $g$. Then
\begin{equation}
2g((D_{s_1}J_E)s_2,s_3)=d_E\Phi(s_1,J_E(s_2),J_E(s_3))-d_E\Phi(s_1,s_2,s_3)+g(N_{J_E}(s_2,s_3),J_E(s_1))
\label{VII5}
\end{equation}
for every $s_1,s_2,s_3\in\Gamma(E)$.
\end{proposition}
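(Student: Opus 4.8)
The plan is to reproduce, in the Lie algebroid language, the classical derivation of this identity in almost Hermitian geometry: sections of $E$ replace vector fields, $[\cdot,\cdot]_E$ replaces the Lie bracket of vector fields, and $\rho_E(s)$ acting on functions replaces directional derivative. The three facts I will use over and over are: the Koszul formula \eqref{III4} characterising $D$; the fact that $D$ is metric and torsion free, i.e. $\rho_E(s_1)\big(g(s_2,s_3)\big)=g(D_{s_1}s_2,s_3)+g(s_2,D_{s_1}s_3)$ and $[s_1,s_2]_E=D_{s_1}s_2-D_{s_2}s_1$ (from \eqref{I7} with $T=0$); and that the Hermitian condition \eqref{VII1} is equivalent to $J_E$ being $g$-skew-adjoint, $g(J_E s_1,s_2)=-g(s_1,J_E s_2)$.

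First I would transform the left-hand side. Since $D_{s_1}$ is a derivation, $(D_{s_1}J_E)s_2=D_{s_1}(J_E s_2)-J_E(D_{s_1}s_2)$, and skew-adjointness of $J_E$ gives
\[
g\big((D_{s_1}J_E)s_2,s_3\big)=g\big(D_{s_1}(J_E s_2),s_3\big)+g\big(D_{s_1}s_2,J_E s_3\big).
\]
I then apply \eqref{III4} to each of these two covariant derivatives — to the first with the argument $J_E s_2$ in place of $s_2$, to the second with $J_E s_3$ in place of $s_3$ — obtaining an expression consisting of $\rho_E$-of-$g$ terms and of $g$-of-bracket terms, with all brackets among $s_1,s_2,s_3,J_E s_2,J_E s_3$.

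Next I would expand the right-hand side. Applying the formula \eqref{I11} for $d_E$ on the $2$-form $\Phi$ and substituting $\Phi(a,b)=g(a,J_E b)=-g(J_E a,b)$ yields
\[
d_E\Phi(s_1,s_2,s_3)=\rho_E(s_1)g(s_2,J_E s_3)-\rho_E(s_2)g(s_1,J_E s_3)+\rho_E(s_3)g(s_1,J_E s_2)-\Phi([s_1,s_2]_E,s_3)+\Phi([s_1,s_3]_E,s_2)-\Phi([s_2,s_3]_E,s_1),
\]
together with the analogous expansion of $d_E\Phi(s_1,J_E s_2,J_E s_3)$, in which I use $\Phi(J_E a,J_E b)=\Phi(a,b)$ and $J_E^2=-\mathrm{id}_{\Gamma(E)}$ to collapse repeated applications of $J_E$; I also expand $g(N_{J_E}(s_2,s_3),J_E s_1)$ directly from \eqref{2.1}, again moving every $J_E$ onto $s_1$ by skew-adjointness. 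Substituting all three expansions, rewriting each $\rho_E(\cdot)g(\cdot,\cdot)$ via metric compatibility and each remaining $[a,b]_E$ via torsion-freeness as $D_a b-D_b a$, the $\rho_E$-terms cancel among themselves and the $D$-terms cancel in pairs, leaving exactly $2g\big((D_{s_1}J_E)s_2,s_3\big)$.

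The main obstacle is the bookkeeping, not any conceptual point: it is a finite sign-and-index computation once the three tools above are fixed. The delicate part is the $d_E\Phi(s_1,J_E s_2,J_E s_3)$ term, where the bracket $[J_E s_2,J_E s_3]_E$ does not simplify and must be paired against the leading term $[J_E s_2,J_E s_3]_E$ of $N_{J_E}(s_2,s_3)$ — this matching is exactly what forces the Nijenhuis term to appear, and is why it cannot be dropped when $J_E$ fails to be integrable. Since the asserted identity is $C^\infty(M)$-linear (tensorial) in $s_1,s_2,s_3$, it is enough to verify it on sections of a local frame $\{e_a\}$, which keeps the computation strictly finite; equivalently one may run the above invariant argument verbatim.
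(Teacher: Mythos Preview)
Your proposal is correct and follows precisely the route the paper indicates: the paper gives no explicit proof but simply remarks that the identity follows ``by a similar argument as in geometry of almost Hermitian manifolds,'' which is exactly the classical Koszul-formula computation you outline, transported verbatim to the Lie algebroid setting. Your sketch is in fact more detailed than what the paper provides, and the observation that the $[J_Es_2,J_Es_3]_E$ term from $d_E\Phi(s_1,J_Es_2,J_Es_3)$ must be absorbed by the Nijenhuis term is the key bookkeeping point.
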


\begin{definition}
A linear connection $\nabla$ on an almost complex Lie algebroid $(E,\rho_E,[\cdot,\cdot]_E, J_E)$  is called \textit{almost complex connection} if the covariant derivative of $J_E$ with respect to $\nabla$ vanishes, that is
\begin{equation}
(\nabla_{s_1}J_E)s_2=\nabla_{s_1}(J_Es_2)-J_E(\nabla_{s_1}s_2)=0\,,\,\forall\,s_1,s_2\in\Gamma(E).
\label{V1}
\end{equation}
\end{definition}
The Proposition \ref{pIII1} says that the Levi-Civita connection of an almost Hermitian Lie algebroid is not an almost complex one. But, using also the Proposition \ref{pIII1} we can prove
\begin{theorem}
Let $(E,\rho_E,[\cdot,\cdot]_E, J_E,g)$ be an almost Hermitian Lie algebroid. The following conditions are equivalent:
\begin{enumerate}
\item[i)] the Levi-Civita connection is an almost complex connection;
\item[ii)] the almost complex structure $J_E$ is integrable and the associated $2$--form $\Phi$ is $d_E$--closed.
\end{enumerate}
\end{theorem}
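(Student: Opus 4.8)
The plan is to take Proposition~\ref{pIII1} as the central identity and read off both implications from it. Recall that it states, for all $s_1,s_2,s_3\in\Gamma(E)$,
\[
2g((D_{s_1}J_E)s_2,s_3)=d_E\Phi(s_1,J_E(s_2),J_E(s_3))-d_E\Phi(s_1,s_2,s_3)+g(N_{J_E}(s_2,s_3),J_E(s_1)),
\]
where $D$ is the Levi-Civita connection of $g$. Throughout I use that $g$ is nondegenerate, so that $g(v,s_3)=0$ for all $s_3$ forces $v=0$, and that $J_E$ is a bundle isomorphism (since $J_E^2=-\mathrm{id}$), so that $J_E(s_1)$ exhausts $\Gamma(E)$ as $s_1$ does.

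\textbf{Direction (ii)$\Rightarrow$(i).} If $N_{J_E}=0$ and $d_E\Phi=0$, then the entire right-hand side of the displayed identity vanishes, hence $g((D_{s_1}J_E)s_2,s_3)=0$ for every $s_3$, hence $(D_{s_1}J_E)s_2=0$ for all $s_1,s_2$. Thus $D$ is an almost complex connection. This is the immediate half.

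\textbf{Direction (i)$\Rightarrow$(ii).} Assume $D_{s_1}J_E=0$ for all $s_1$. I first produce $d_E\Phi=0$. Since $D$ is the Levi-Civita connection it is metric, $D_sg=0$, and $\Phi(s_1,s_2)=g(s_1,J_E(s_2))$; a short computation gives $(D_s\Phi)(s_1,s_2)=g(s_1,(D_sJ_E)s_2)$, so $D_s\Phi=0$ for all $s$. Because $D$ is torsion free, the Lie algebroid exterior derivative of a $2$-form equals the alternation of its covariant derivative,
\[
d_E\Phi(s_1,s_2,s_3)=(D_{s_1}\Phi)(s_2,s_3)-(D_{s_2}\Phi)(s_1,s_3)+(D_{s_3}\Phi)(s_1,s_2),
\]
which is obtained from \eqref{I11} by substituting $[s_i,s_j]_E=D_{s_i}s_j-D_{s_j}s_i$ and expanding the $\rho_E(s_i)$-terms via $D$ (this is the precise form of the remark in the preliminaries that $d_E$ can be expressed through a connection and its torsion; see also \cite{Fe}). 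Hence $d_E\Phi=0$. Now plug back into Proposition~\ref{pIII1}: its left-hand side is $0$ because $DJ_E=0$, and both $d_E\Phi$ terms vanish, so $g(N_{J_E}(s_2,s_3),J_E(s_1))=0$ for all $s_1,s_2,s_3$. Nondegeneracy of $g$ together with invertibility of $J_E$ then forces $N_{J_E}=0$, i.e. $J_E$ is integrable. Combined with $d_E\Phi=0$ this is precisely (ii).

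\textbf{Main obstacle.} Everything is elementary once Proposition~\ref{pIII1} is granted; the one point that requires care is the identity writing $d_E\Phi$ as the alternation of $D\Phi$ for the torsion-free connection $D$. One must check that the anchor terms $\rho_E(s_i)\Phi(\cdots)$ and the bracket terms $\Phi([s_i,s_j]_E,\cdots)$ in \eqref{I11} recombine exactly into the covariant derivatives $(D_{s_i}\Phi)(\cdots)$ after inserting $[s_i,s_j]_E=D_{s_i}s_j-D_{s_j}s_i$; this is a routine but slightly lengthy cancellation, which could alternatively be quoted from the Lie algebroid literature. As an independent check one could also derive $N_{J_E}=0$ directly, by expanding $N_{J_E}(s_1,s_2)$ through $[s_i,s_j]_E=D_{s_i}s_j-D_{s_j}s_i$ and using $DJ_E=0$: all terms cancel, reflecting the general fact that an almost complex structure parallel for a torsion-free connection is integrable.
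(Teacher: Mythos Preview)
Your proof is correct and follows exactly the approach the paper indicates: the paper does not spell out a proof but simply says the theorem follows ``using also the Proposition~\ref{pIII1}'', and you have supplied precisely those details. The two directions you give --- (ii)$\Rightarrow$(i) by reading off the identity directly, and (i)$\Rightarrow$(ii) by first deducing $d_E\Phi=0$ from $D\Phi=0$ via the alternation formula for a torsion-free connection and then feeding this back into the identity to force $N_{J_E}=0$ --- are the standard route, and your verification of $d_E\Phi=\sum (-1)^i(D_{s_i}\Phi)(\cdots)$ for torsion-free $D$ is the only step requiring care, as you note.
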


\begin{remark}
When $J_E$ is integrable then $\Phi$ is $d_E$--closed if and only if the Levi-Civita connection is almost complex.
\end{remark}

\begin{definition}
An almost Hermitian Lie algebroid $(E,\rho_E,[\cdot,\cdot]_E, J_E,g)$ is said to be \textit{almost K\"{a}hlerian} if the fundamental $2$--form $\Phi$ is $d_E$--closed and, if moreover the almost complex structure $J_E$ is integrable then it is said to be \textit{K\"{a}hlerian}.
\end{definition}

\begin{example}
\label{e3.4}
Let us consider the Hermitian Lie algebroid $(\mathcal{L}^p(E),\rho_{\mathcal{L}^p(E)},[\cdot,\cdot]_{\mathcal{L}^p(E)}, J^c_E,g^c)$ from Example \ref{e3.1} associated to a given Hermitian Lie algebroid $(E,\rho_E,[\cdot,\cdot]_E, J_E,g)$. As usual for tangent bundle case, see \cite{Ya-I}, the complet lift of the Levi-Civita $D$ of $g$ is defined by $D^c_{s_1^c}s_2^c=(D_{s_1}s_2)^c$ and it is the Levi-Civita connection of $g^c$ on  $\mathcal{L}^p(E)$. Now it is easy to see that if $DJ_E=0$ then $D^cJ_E^c=0$ which says that $(E,\rho_E,[\cdot,\cdot]_E, J_E,g)$ is a K\"{a}hlerian Lie algebroid iff $(\mathcal{L}^p(E),\rho_{\mathcal{L}^p(E)},[\cdot,\cdot]_{\mathcal{L}^p(E)}, J^c_E,g^c)$ is also K\"{a}hlerian.
\end{example}

\begin{remark}
Since a Hermitian metric $g$ on an almost complex Lie algebroid is a particular case of a Riemannian metric it is nondegenerated. On the other hand the almost complex structure $J_E$ is also nondegenerated, hence the associated $2$--form $\Phi$ is nondegenerated on $E$. Thus, every almost complex Lie algebroid admits a structure of almost symplectic Lie algebroid \cite{I-M-D-M-P} and a nondegenerate Poisson structure \cite{Po2}. 
\end{remark}

Let us consider now a K\"{a}hlerian Lie algebroid $(E,\rho_E,[\cdot,\cdot]_E, J_E,g)$ over a smooth manifold $M$, $D$ the associated Levi-Civita connection and $R$ its curvature tensor.

The Riemann curvature tensor of the  K\"{a}hlerian Lie algebroid $(E,\rho_E,[\cdot,\cdot]_E, J_E,g)$ is usually defined  as a $4$--linear map $\mathcal{R}:\Gamma(E)\times\Gamma(E)\times\Gamma(E)\times\Gamma(E)\rightarrow C^\infty(M)$ given by
\begin{displaymath}
\mathcal{R}(s_1,s_2,s_3,s_4)=g(R(s_3,s_4)s_2,s_1).
\end{displaymath}
The standard properties of this map follows the classical ones from K\"{a}hlerian geometry. Also the sectional curvature of a K\"{a}hlerian Lie algebroid can be introduced as follows:

For every $x\in M$ and a given $2$--plane $P$ of $E_x$ ($2$-dimensional space in $E_x$) we define the function $K(P)$ by $K(P)=\mathcal{R}(s_1,s_2,s_1,s_2)$, where  $\{s_1,s_2\}$ is an orthonormal frame in $P$.

We will consider the restriction of $K$ to all $2$--planes from $E_x$ which are invariant by $J_E$, that is, for every section $s\in E_x$ we have $J_E(s)\in E_x$. 

\begin{definition}
For a given K\"{a}hlerian Lie algebroid $(E,\rho_E,[\cdot,\cdot]_E, J_E,g)$, the sectional curvature of $E$ in direction of the $J_E$ invariant $2$--plane $P$ is defined as usual by 
\begin{displaymath}
K(P)=\frac{\mathcal{R}(s_1,s_2,s_1,s_2)}{g(s_1,s_1)g(s_2,s_2)-g^2(s_1,s_2)}
\end{displaymath}
and  is called the \textit{holomorphic sectional curvature in direction of $P$} of $(E,\rho_E,[\cdot,\cdot]_E, J_E,g)$.  The restriction of sectional curvature to all $J_E$ invariant $2$-planes $P$, is called \textit{holomorphic sectional curvature} of $(E,\rho_E,[\cdot,\cdot]_E, J_E,g)$. 
\end{definition}

For a given $J_E$ invariant $2$--plane $P$, we can consider orthonormal frames of the form $\{s,J_E(s)\}$, where $s$ are unitary sections in $E_x$. Then the holomorphic sectional curvature is then given by
\begin{displaymath}
K(P)=\mathcal{R}(s,J_E(s),s,J_E(s))
\end{displaymath}
and the Riemann curvature tensor of a K\"{a}hlerian Lie algebroid $(E,\rho_E,[\cdot,\cdot]_E, J_E,g)$ is completely determined by the holomorphic sectional curvature defined for all $J_E$ invariant planes.

If the function  $K$ is constant for all $J_E$ invariant $2$--planes from $E_x$ and for every $x\in M$ then the K\"{a}hlerian Lie algebroid $(E,\rho_E,[\cdot,\cdot]_E, J_E,g)$ is said to be of \textit{constant holomorphic sectional curvature}.

We have the following Schur type theorem:
\begin{theorem}
\label{Schur}
Let $(E,\rho_E,[\cdot,\cdot]_E, J_E,g)$ be a K\"{a}hlerian transitive Lie algebroid with ${\rm rank\,}E=2m\geq 4$. If the holomorphic sectional curvature depends only of $x\in M$ (it is independent of the $J_E$-invariant $2$-plane $P$) then $(E,\rho_E,[\cdot,\cdot]_E, J_E,g)$ is of constant holomorphic sectional curvature.
\end{theorem}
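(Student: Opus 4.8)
The plan is to mimic the classical proof of Schur's lemma for holomorphic sectional curvature (as given, e.g., in Kobayashi–Nomizu or in Yano's book on complex manifolds), transplanting it into the Lie algebroid setting, where all the structural ingredients — the Levi-Civita connection $D$ of \eqref{III4}, its curvature $R$ from \eqref{I9}, the Riemann tensor $\mathcal{R}(s_1,s_2,s_3,s_4)=g(R(s_3,s_4)s_2,s_1)$, the second Bianchi identity (valid on Lie algebroids, as noted in the excerpt), and the K\"ahler identity $DJ_E=0$ — are at our disposal. The hypothesis that $E$ is transitive will be used precisely to replace the classical argument ``differentiate along an arbitrary tangent direction of $M$'' by ``apply $\rho_E(s)$ for $s\in\Gamma(E)$ and use that $\rho_E$ is onto $\mathcal{X}(M)$,'' so that the pointwise constancy of the curvature function forces its constancy along $M$.

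First I would record the algebraic consequences of the K\"ahler condition: since $DJ_E=0$, the curvature operator commutes with $J_E$, $R(s_1,s_2)\circ J_E=J_E\circ R(s_1,s_2)$, and hence $\mathcal{R}$ satisfies the extra symmetry $\mathcal{R}(J_Es_1,J_Es_2,s_3,s_4)=\mathcal{R}(s_1,s_2,s_3,s_4)$ in addition to the usual symmetries of a Riemann tensor on a Lie algebroid (antisymmetry in the first pair, in the second pair, pair-interchange symmetry, and the first Bianchi identity — all of which go through on a Lie algebroid with a torsion-free metric connection by the same formal manipulations). The second step is the standard polarization argument: the assumption is that there is a function $c\in C^\infty(M)$ with $\mathcal{R}(s,J_Es,s,J_Es)=c\,g(s,s)^2$ for every $x\in M$ and every $s\in E_x$. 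Since $\mathrm{rank}\,E=2m\ge 4$, one can expand $\mathcal{R}(s+t,J_E(s+t),s+t,J_E(s+t))$, together with the variant replacing $t$ by $J_Et$, and use the symmetries above to solve for the full tensor; the outcome is the familiar closed formula
\begin{displaymath}
\mathcal{R}(s_1,s_2,s_3,s_4)=\frac{c}{4}\bigl(g(s_1,s_3)g(s_2,s_4)-g(s_1,s_4)g(s_2,s_3)+g(s_1,J_Es_3)g(s_2,J_Es_4)-g(s_1,J_Es_4)g(s_2,J_Es_3)+2g(s_1,J_Es_2)g(s_3,J_Es_4)\bigr),
\end{displaymath}
valid pointwise at each $x\in M$, with $c$ a priori a function on $M$.

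The third and decisive step is to show $c$ is locally constant. Here I would apply the covariant derivative $D_s$ (for an arbitrary $s\in\Gamma(E)$) to both sides of the boxed formula: since $Dg=0$ and $DJ_E=0$, the entire right-hand side differentiates to $\tfrac14(\rho_E(s)c)\cdot(\text{the same tensorial expression in }g,J_E)$, while the left-hand side, contracted suitably and combined over cyclic permutations, is controlled by the second Bianchi identity $\sum_{\mathrm{cycl}}(D_{s_i}R)(s_j,s_k)=0$. Taking an appropriate trace (the Lie algebroid analogue of the contracted second Bianchi identity) kills the Bianchi-type terms and leaves a relation of the form $(\rho_E(s)c)\cdot(\text{nonzero numerical constant depending on }2m)=0$ for all $s\in\Gamma(E)$; the constant is nonzero exactly because $2m\ge 4$. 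Since $E$ is transitive, $\rho_E$ surjects onto $\mathcal{X}(M)$, so $Xc=0$ for all vector fields $X$ on $M$, whence $c$ is constant on each connected component of $M$, i.e.\ $(E,\rho_E,[\cdot,\cdot]_E,J_E,g)$ has constant holomorphic sectional curvature.

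I expect the main obstacle to be purely bookkeeping rather than conceptual: one must be careful that the contracted second Bianchi identity and the trace computations really do go through verbatim on a Lie algebroid. The point is that every step only uses $\mathbb{R}$-linearity, the Leibniz rule for $D$, $C^\infty(M)$-tensoriality of $R$ and of $\mathcal{R}$, metric and almost-complex compatibility of $D$, and the two Bianchi identities — all of which are available in the Lie algebroid framework (the excerpt explicitly invokes ``the usual Bianchi identities hold in the Lie algebroid framework''). The only genuinely algebroid-specific input is the role of transitivity in the final step, which replaces the classical ``$M$ connected'' differentiation argument; without transitivity one would only get that $c$ is constant along the leaves of the (generalized) foliation defined by $\mathrm{im}\,\rho_E$.
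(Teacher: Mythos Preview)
Your proposal is correct and follows exactly the route the paper indicates: the paper's proof consists of the single sentence ``Follows in the same manner as in Schur theorem for K\"{a}hler manifolds,'' and you have simply spelled out that classical argument in the Lie algebroid framework, including the role of transitivity (and your final remark about what happens without transitivity matches the paper's own Remark following the theorem).
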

\begin{proof}
Follows in the same manner as in Schur theorem for K\"{a}hler manifolds.
\end{proof}
\begin{remark}If the hypothesis of the theorem does not make the assumption that the anchor is surjective, then the real function given by the sectional curvature is constant on every leaf of the singular foliation of the Lie algebroid.
\end{remark}

\section{Hermitian metrics on the associated complex Lie algebroid and some related structures}
\setcounter{equation}{0}
In this section the Hermitian metrics and linear connections compatible with such metrics on the associated complex Lie algebroid are studied and we present the Levi-Civita connection associated to a such metric. Also, we describe some $E$-Chern forms of $E^{1,0}$ associated to an almost complex connection $\nabla$ on $E$. Finally, we consider a metric product connection associated to an almost Hermitian Lie algebroid and a $2$--form section for $E^{0,1}$ similar to the second fundamental form of complex distributions is studied in our setting.

\subsection{Linear connections compatible with Hermitian metrics  on the associated complex Lie algebroid $(E_{\mathbb{C}},\rho_E,[\cdot,\cdot]_E)$}
A linear connection $\nabla$ on the almost complex Lie algebroid $(E,\rho_E,[\cdot,\cdot]_E, J_E)$ over a smooth manifold $M$ extend by $\mathbb{C}$-linearity to a connection on the complexified Lie algebroid $(E_{\mathbb{C}},\rho_E,[\cdot,\cdot]_E)$, \cite{W}, as follows:
\begin{enumerate}
\item[(1)] $\nabla$ is $\mathbb{C}$--bilinear;
\item[(2)] $\nabla_{fs_1}s_2=f\nabla_{s_1}s_2$, for all $f\in C^\infty(M)_{\mathbb{C}}$ and $s_1,s_2\in\Gamma(E_{\mathbb{C}})$;
\item[(3)] $\nabla_{s_1}(fs_2)=(\rho_E(s_1)f)s_2+f\nabla_{s_1}s_2$, for all $f\in C^\infty(M)_{\mathbb{C}}$ and $s_1,s_2\in\Gamma(E_{\mathbb{C}})$.
\end{enumerate}

If $\{e_a\}$, $a=1,\ldots,m$ is a local basis of $E^{1,0}$ and $\{\overline{e_a}\}$, $a=1,\ldots,m$ is a local basis of $E^{0,1}$, then a linear connection $\nabla$ on the complex Lie algebroid $(E_{\mathbb{C}},\rho_E,[\cdot,\cdot]_E)$ is locally given by
\begin{displaymath}
\nabla_{e_a}e_b=\Gamma^c_{ab}e_c+\Gamma^{\overline{c}}_{ab}\overline{e_c}\,,\,\nabla_{e_a}\overline{e_b}=\Gamma^c_{a\overline{b}}e_c+\Gamma^{\overline{c}}_{a\overline{b}}\overline{e_c},
\end{displaymath}
\begin{displaymath}
\nabla_{\overline{e_a}}e_b=\Gamma^c_{\overline{a}b}e_c+\Gamma^{\overline{c}}_{\overline{a}b}\overline{e_c}\,,\,\nabla_{\overline{e_a}}\overline{e_b}=\Gamma^c_{\overline{a}\,\overline{b}}e_c+\Gamma^{\overline{c}}_{\overline{a}\,\overline{b}}\overline{e_c}.
\end{displaymath} 
By $\mathbb{C}$--linearity condition it follows that $\nabla$ satisfies  $\overline{\nabla_{s_1}s_2}=\nabla_{\overline{s_1}}\overline{s_2}$, hence we get
\begin{displaymath}
\Gamma^c_{ab}=\overline{\Gamma^{\overline{c}}_{\overline{a}\,\overline{b}}}\,,\,\Gamma^c_{a\overline{b}}=\overline{\Gamma^{\overline{c}}_{\overline{a}b}}\,,\,\Gamma^c_{\overline{a}b}=\overline{\Gamma^{\overline{c}}_{a\overline{b}}}\,,\,\Gamma^{\overline{c}}_{ab}=\overline{\Gamma^c_{\overline{a}\,\overline{b}}}.
\end{displaymath} 
On the other hand we have $J_E(e_a)=ie_a$ and $J_E(\overline{e_b})=i\overline{e_b}$, thus $\nabla$ is almost complex if and only if
$\Gamma^{\overline{c}}_{ab}=\Gamma^{\overline{c}}_{\overline{a}b}=\Gamma^{c}_{a\overline{b}}=\Gamma^{c}_{\overline{a}\,\overline{b}}=0$. Indeed, from $\nabla_{e_a}J_E(e_b)=J_E(\nabla_{e_b}e_b)$ we obtain $\Gamma^{\overline{c}}_{ab}=0$. Similarly, we get the vanishing of the others coefficients. Thus, in this case we have 
\begin{displaymath}
\nabla_{e_a}e_b=\Gamma^c_{ab}e_c\,,\,\nabla_{e_a}\overline{e_b}=\Gamma^{\overline{c}}_{a\overline{b}}\overline{e_c}\,,\,\nabla_{\overline{e_a}}e_b=\Gamma^c_{\overline{a}b}e_c\,,\,\nabla_{\overline{e_a}}\overline{e_b}=\Gamma^{\overline{c}}_{\overline{a}\,\overline{b}}\overline{e_c},
\end{displaymath}
which says that an almost complex connection on  $E_{\mathbb{C}}$ preserves the distributions of $J_E$.

If we consider a Hermitian metric $g$ on the almost complex Lie algebroid $(E,\rho_E,[\cdot,\cdot]_E, J_E)$ over a smooth manifold $M$, then we can naturally extend it to a Hermitian metric on the complex Lie algebroid $(E_{\mathbb{C}},\rho_E,[\cdot,\cdot]_E)$. 
\begin{remark}
If $\Phi$ is the associated fundamental $2$--form, as usual we can prove that $\Phi\in\Omega^{1,1}(E)$.
\end{remark}
\begin{remark}
Let $\{e_1,\ldots,e_m\}$ a  basis of $E^{1,0}$ over $\mathbb{C}$ and $\{e^1,\ldots,e^m\}$ its dual basis on $(E^{1,0})^*$ such that $\{\overline{e_1},\ldots,\overline{e_m}\}$ is a basis of $E^{0,1}$ over $\mathbb{C}$ and $\{\overline{e^1},\ldots,\overline{e^m}\}$ is the dual basis on $(E^{0,1})^*$. If we denote by $g_{a\overline{b}}=g(e_a,\overline{e_b})$, $a,b=1,\ldots,m$ then
$g_{a\overline{b}}=\overline{g_{b\overline{a}}}$ and $\Phi=-i\sum\limits_{a,b=1}^mg_{a\overline{b}}e^a\wedge\overline{e^b}$,
where the  exterior product is that in Lie algebroids framework, see \cite{M}.
\end{remark}

Similarly to Riemannian Lie algebroids case, we have
\begin{definition}
A linear connection on the complex Lie algebroid $(E_{\mathbb{C}},\rho_E,[\cdot,\cdot]_E)$ over $M$ is said to be \textit{compatible} with the Hermitian metric $g$ on $E_{\mathbb{C}}$ if $\nabla_sg=0$, $\forall\,s\in\Gamma(E_{\mathbb{C}})$, that is, for every two sections $s_1,s_2\in\Gamma(E_{\mathbb{C}})$ we have
\begin{equation}
\label{II.3.5}
(\nabla_sg)(s_1,s_2)=\rho_E(s)(g(s_1,s_2))-g(\nabla_ss_{1},s_2)-g(s_1,\nabla_ss_2)=0.
\end{equation}
\end{definition}

\begin{proposition}
The curvature $R$ of a linear connection $\nabla$ on $(E_{\mathbb{C}},\rho_E,[\cdot,\cdot]_E)$ over $M$, which is compatible with the Hermitian metric $g$ on $E_{\mathbb{C}}$, satisfies
\begin{displaymath}
g(R(s_1,s_2)s,s^{'})=-\overline{g(R(s_1,s_2)s^{'},s)}\,,\,\forall\,s,s^{'}s_1,s_2\in\Gamma(E_{\mathbb{C}}).
\end{displaymath} 
\end{proposition}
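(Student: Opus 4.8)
The plan is to exploit metric compatibility twice, once for each curvature endomorphism in the commutator, and let the terms cancel in pairs. Recall that compatibility \eqref{II.3.5} reads $\rho_E(s)(g(u,v)) = g(\nabla_s u, v) + g(u, \nabla_s v)$ for all $u,v \in \Gamma(E_{\mathbb{C}})$ and $s \in \Gamma(E_{\mathbb{C}})$; note that no conjugation appears here because $g$ has already been extended $\mathbb{C}$-bilinearly (the Hermitian feature being $g_{a\overline b} = \overline{g_{b\overline a}}$, not conjugate-linearity in the first slot). So I should be careful: the statement to be proved has a bar on the right-hand side, which signals that what actually enters is the Hermitian pairing $h(u,v) = g(u,\overline v)$, or equivalently that one should read $g(R(s_1,s_2)s',s)$ with the understanding that the relevant reality comes from $\overline{g(u,v)} = g(\overline u, \overline v)$. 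The first thing I would do is decide which convention is in force and, if it is the Hermitian $h$, rewrite everything in terms of $h$ so that $h$ is conjugate-linear in the second variable; then the desired identity becomes the familiar skew-Hermitian property $h(R(s_1,s_2)s, s') = -\overline{h(R(s_1,s_2)s',s)} = -h(s, R(s_1,s_2)s')$.

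Granting that, the core computation is short. First I would establish the infinitesimal statement that compatibility forces $R(s_1,s_2)$ to be skew-symmetric (resp. skew-Hermitian) with respect to the metric, by the standard argument: apply compatibility to $\rho_E(s_1)\big(g(\nabla_{s_2}s, s')\big)$ and to $\rho_E(s_2)\big(g(\nabla_{s_1}s,s')\big)$, apply it again to $g(\nabla_{s_1}\nabla_{s_2}s,s')$ and the symmetric terms, and also to $\rho_E([s_1,s_2]_E)\big(g(s,s')\big)$; then use that $\rho_E$ is a Lie algebra homomorphism, i.e. $\rho_E([s_1,s_2]_E) = [\rho_E(s_1),\rho_E(s_2)]$ acting on functions, so the "mixed" second-derivative terms $\rho_E(s_1)\rho_E(s_2)\big(g(s,s')\big) - \rho_E(s_2)\rho_E(s_1)\big(g(s,s')\big) = \rho_E([s_1,s_2]_E)\big(g(s,s')\big)$ cancel exactly. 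Collecting the surviving terms yields $g(R(s_1,s_2)s,s') + g(s, R(s_1,s_2)s') = 0$ (the $\nabla\nabla$ cross terms cancel against each other and against the bracket term). Finally I would apply the reality relation $\overline{g(u,v)} = g(\overline u,\overline v)$ — or directly the conjugate-linearity of $h$ in its second slot — to convert $g(s,R(s_1,s_2)s')$ into $\overline{g(R(s_1,s_2)s',s)}$, which gives the claimed formula verbatim.

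The main obstacle is bookkeeping rather than conceptual: one must be scrupulous about where complex conjugation enters, because $g$ on $E_{\mathbb{C}}$ is used in two slightly different guises (the $\mathbb{C}$-bilinear extension and the associated Hermitian form), and sign errors propagate. A secondary point worth verifying explicitly is that the curvature $R$ of the extended connection is still $C^\infty(M)_{\mathbb{C}}$-linear in all three arguments and antisymmetric in the first two — this was asserted in the real case in Section 1 and extends routinely by $\mathbb{C}$-linearity, but it is what licenses testing the identity on a local frame $\{e_a, \overline{e_a}\}$ if one prefers a coordinate proof over the invariant one. In the local approach the statement reduces to a symmetry relation among the "Christoffel" coefficients $\Gamma^{\ldots}_{\ldots}$ coming from $\rho_E(e_a)(g_{b\overline c}) = \Gamma^d_{ab}g_{d\overline c} + \overline{\Gamma^{d}_{\overline a \overline c}}\,g_{b\overline d}$ and its analogues, and the curvature identity then follows by differentiating once more and antisymmetrizing; I would present the invariant proof as the main line and remark that the local one is equivalent.
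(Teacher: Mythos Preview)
Your proposal is correct and follows exactly the approach the paper indicates: the paper's own proof is the single line ``Follows by direct calculation, using the relation \eqref{II.3.5},'' and you have simply supplied the details of that standard compatibility computation, together with a careful discussion of the conjugation convention (i.e., that $g$ here is to be read as the Hermitian extension so that $g(u,v)=\overline{g(v,u)}$, consistent with Remark~\ref{rcurv}) which the paper leaves implicit.
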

\begin{proof}
Follows by direct calculation, using the relation \eqref{II.3.5}.
\end{proof}
\begin{remark}
\label{rcurv}
If $\{e_a\}$, $a=1,\ldots,m$ is a local orthonormal frame on $E_{\mathbb{C}}$, the matrix of curvature $2$--forms $(R^b_a)$ with respect to this frame has the property $R^b_a(s_1,s_2) =-\overline{R^a_b(s_1,s_2)}$ or $R=-\overline{R^t}$ in matrix notation. 
\end{remark}

In the following of this subsection we describe the Levi-Civita connection on complex Lie algebroid $(E_{\mathbb{C}},\rho_E,[\cdot,\cdot]_E,g)$ over $M$, when $(E,\rho_E,[\cdot,\cdot]_E,J_E,g)$ is almost Hermitian or K\"{a}hlerian. Let us put $g_{a\overline{b}}=g(e_a,\overline{e_b})$ and $g(e_a,e_b)=g(\overline{e_a},\overline{e_b})=0$. Then, we have
\begin{proposition}
\label{coef}
Let $(E,\rho_E,[\cdot,\cdot]_E,J_E,g)$ be an almost Hermitian Lie algebroids over $M$. Then the local coefficients of the Levi-Civita connection on the  associated Hermitian complex Lie algebroid $(E_{\mathbb{C}},\rho_E,[\cdot,\cdot]_E,g)$ are given by
\begin{eqnarray*}
\Gamma^{d}_{ab}=\frac{1}{2}g^{\overline{c}d}\left(\rho_E(e_a)( g_{b\overline{c}})+\rho_E(e_b)(g_{a\overline{c}})+C^{e}_{ab}g_{e\overline{c}}-C^{\overline{e}}_{b\overline{c}}g_{a\overline{e}}+C^{\overline{e}}_{\overline{c}a}g_{b\overline{e}}\right),
\end{eqnarray*}
\begin{eqnarray*}
\Gamma^{d}_{a\overline{b}}=\frac{1}{2}g^{\overline{c}d}\left(\rho_E(e_{\overline{b}})(g_{a\overline{c}})-\rho_E(e_{\overline{c}})(g_{a\overline{b}})+C^{e}_{a\overline{b}}g_{e\overline{c}}-C^{\overline{e}}_{\overline{b}\,\overline{c}}g_{a\overline{e}}+C^{e}_{\overline{c}a}g_{e\overline{b}}\right),
\end{eqnarray*}
\begin{eqnarray*}
\Gamma^{d}_{\overline{a}b}=\frac{1}{2}g^{\overline{c}d}\left(\rho_E(e_{\overline{a}})(g_{b\overline{c}})-\rho_E(e_{\overline{c}})(g_{b\overline{a}})+C^{e}_{\overline{a}b}g_{e\overline{c}}-C^{e}_{b\overline{c}}g_{e\overline{a}}+C^{\overline{e}}_{\overline{c}\,\overline{a}}g_{b\overline{e}}\right),
\end{eqnarray*}
\begin{eqnarray*}
\Gamma^{\overline{d}}_{ab}=\frac{1}{2}g^{\overline{d}c}\left(C^{\overline{e}}_{ab}g_{c\overline{e}}-C^{\overline{e}}_{bc}g_{a\overline{e}}+C^{\overline{e}}_{ca}g_{b\overline{e}}\right),
\end{eqnarray*}
and their conjugates, where $(g^{\overline{b}a})$ is the inverse matrix of $(g_{a\overline{b}})$. 
\end{proposition}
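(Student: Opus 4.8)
The plan is to derive all four families of coefficients directly from the Koszul-type formula \eqref{III4} for the Levi-Civita connection, extended $\mathbb{C}$-bilinearly to $E_{\mathbb{C}}$. Since $D$ remains the unique torsion-free, metric-compatible connection on the complex Lie algebroid $(E_{\mathbb{C}},\rho_E,[\cdot,\cdot]_E,g)$, formula \eqref{III4} holds verbatim for $s_1,s_2,s_3\in\Gamma(E_{\mathbb{C}})$. The structural fact that drives everything is that $g$ is Hermitian, so $g(e_a,e_b)=g(\overline{e_a},\overline{e_b})=0$ and the only surviving pairings among basis sections are $g_{a\overline{b}}=g(e_a,\overline{e_b})$ with $g_{a\overline{b}}=\overline{g_{b\overline{a}}}$; hence pairing a section against $\{\overline{e_c}\}$ isolates its $E^{1,0}$-part and pairing against $\{e_c\}$ isolates its $E^{0,1}$-part.

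First I would treat $\Gamma^d_{ab}$. Writing $D_{e_a}e_b=\Gamma^c_{ab}e_c+\Gamma^{\overline c}_{ab}\overline{e_c}$ and pairing with $\overline{e_c}$ gives $2\,g(D_{e_a}e_b,\overline{e_c})=2\,\Gamma^d_{ab}\,g_{d\overline c}$. Expanding the right-hand side of \eqref{III4} with $(s_1,s_2,s_3)=(e_a,e_b,\overline{e_c})$: the term $\rho_E(\overline{e_c})(g(e_a,e_b))$ vanishes, and the three bracket terms are evaluated from $[e_a,e_b]_E=C^e_{ab}e_e+C^{\overline e}_{ab}\overline{e_e}$ and $[\overline{e_c},e_a]_E=C^e_{\overline c a}e_e+C^{\overline e}_{\overline c a}\overline{e_e}$, retaining only the components that pair nontrivially; after using the antisymmetry $C^{\overline e}_{\overline c b}=-C^{\overline e}_{b\overline c}$ one obtains $2\,\Gamma^d_{ab}\,g_{d\overline c}$ equal to the bracketed expression in the statement, and multiplying by $\tfrac12 g^{\overline c d}$ (the inverse matrix of $(g_{a\overline b})$) closes the case.

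The coefficients $\Gamma^d_{a\overline b}$ and $\Gamma^d_{\overline a b}$ are obtained in the same way, taking $(s_1,s_2,s_3)$ to be $(e_a,\overline{e_b},\overline{e_c})$ and $(\overline{e_a},e_b,\overline{e_c})$ respectively and again pairing with $\overline{e_c}$; this time one of the derivative terms, namely $-\rho_E(\overline{e_c})(g(\cdot,\cdot))$, does not vanish, so two first-order terms survive with opposite signs, while the three bracket terms are read off as before using the antisymmetry of the $C$'s in their lower indices. For $\Gamma^{\overline d}_{ab}$ one instead pairs $D_{e_a}e_b$ with $e_c$: every $\rho_E$-derivative term in \eqref{III4} then acts on a quantity of the form $g(e_\bullet,e_\bullet)=0$ and disappears, leaving only the three bracket terms, and contraction with $\tfrac12 g^{\overline d c}$ yields the purely structural formula (which correctly reduces to $0$ when $J_E$ is integrable, since then $C^{\overline e}_{ab}=0$, in agreement with the earlier observation that an almost complex connection preserves the eigenbundles of $J_E$). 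The conjugate coefficients follow from $\overline{D_{s_1}s_2}=D_{\overline{s_1}}\overline{s_2}$.

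The whole argument is routine; the only point demanding attention is the bookkeeping — for each target coefficient, choosing the pairing that isolates it, discarding the terms that vanish by $g(e_a,e_b)=g(\overline{e_a},\overline{e_b})=0$, and invoking the antisymmetry of the structure functions in their lower indices to massage the surviving terms into the precise displayed form. I do not expect any substantive obstacle beyond this sign-and-index accounting.
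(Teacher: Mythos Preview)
Your approach is correct and essentially identical to the paper's: both substitute the four combinations $(s_1,s_2,s_3)=(e_a,e_b,\overline{e_c})$, $(e_a,\overline{e_b},\overline{e_c})$, $(\overline{e_a},e_b,\overline{e_c})$, $(e_a,e_b,e_c)$ into the Koszul formula \eqref{III4} and use the Hermitian orthogonality $g(e_a,e_b)=g(\overline{e_a},\overline{e_b})=0$ together with the antisymmetry of the structure functions to read off the coefficients. Your write-up in fact supplies more of the bookkeeping than the paper does.
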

\begin{proof}
Follows by direct calculus, taking in the formula \eqref{III4} which gives the Levi-Civita connection, the following combinations: $s_1=e_a,s_2=e_b,s_3=\overline{e_c}$, $s_1=e_a,s_2=\overline{e_b},s_3=\overline{e_c}$, $s_1=\overline{e_a},s_2=e_b,s_3=\overline{e_c}$ and $s_1=e_a,s_2=e_b,s_3=e_c$, respectively.
\end{proof}

\begin{corollary}
If the almost complex Lie algebroid $(E,\rho_E,[\cdot,\cdot]_E,J_E,g)$ over $M$ is K\"{a}hlerian then the possible nonzero coefficients of the Levi-Civita connection on the associated complex Lie algebroid $(E_{\mathbb{C}},\rho_E,[\cdot,\cdot]_E,g)$ are
\begin{displaymath}
\Gamma^d_{ab}=g^{\overline{c}d}\left(\rho_E(e_a)(g_{b\overline{c}})+C^{\overline{e}}_{\overline{c}a}g_{b\overline{e}}\right)\,,\,\Gamma^d_{\overline{a}b}=C^d_{\overline{a}b}.
\end{displaymath}
\end{corollary}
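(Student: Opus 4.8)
The plan is to begin from the four families of Levi--Civita coefficients computed in Proposition~\ref{coef} and to impose the two defining requirements of a K\"ahlerian Lie algebroid: integrability of $J_E$ and $d_E\Phi=0$. The integrability part supplies the Newlander--Nirenberg relations $C^{\overline c}_{ab}=C^{c}_{\overline a\,\overline b}=0$, and the closedness of $\Phi$ must be turned into explicit identities among the structure functions and the metric components $g_{a\overline b}$.

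First I would eliminate all but two of the coefficient families. By the theorem in Section~3 characterizing when the Levi--Civita connection is an almost complex connection, for a K\"ahlerian Lie algebroid the Levi--Civita connection $D$ is almost complex; as recalled just before Proposition~\ref{coef}, an almost complex connection on $E_{\mathbb C}$ preserves the eigenbundles $E^{1,0}$ and $E^{0,1}$, so that $\Gamma^{\overline c}_{ab}=\Gamma^{c}_{a\overline b}=\Gamma^{\overline c}_{\overline a b}=\Gamma^{c}_{\overline a\,\overline b}=0$, and by the conjugation symmetry $\overline{\nabla_{s_1}s_2}=\nabla_{\overline{s_1}}\overline{s_2}$ one has $\Gamma^{\overline c}_{a\overline b}=\overline{\Gamma^{c}_{\overline a b}}$ and $\Gamma^{\overline c}_{\overline a\,\overline b}=\overline{\Gamma^{c}_{ab}}$. (That $\Gamma^{\overline d}_{ab}=0$ is also immediate from the fourth formula of Proposition~\ref{coef} together with $C^{\overline c}_{ab}=0$.) Hence only $\Gamma^{d}_{ab}$ and $\Gamma^{d}_{\overline a b}$ remain to be identified.

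Next I would extract the consequences of $d_E\Phi=0$. Writing $\Phi=-i\,g_{a\overline b}\,e^a\wedge\overline{e^b}$, applying the Chevalley--Eilenberg formula \eqref{I11} for $d_E$ on a $2$--form, and using the bracket relations in the complex basis recalled in Section~2 together with $C^{\overline c}_{ab}=C^{c}_{\overline a\,\overline b}=0$, I would evaluate $d_E\Phi$ on the triples $(e_a,e_b,\overline{e_c})$ and $(e_b,\overline{e_a},\overline{e_c})$. This yields, respectively,
\begin{displaymath}
\rho_E(e_a)(g_{b\overline c})-\rho_E(e_b)(g_{a\overline c})-C^{d}_{ab}g_{d\overline c}-C^{\overline d}_{a\overline c}g_{b\overline d}+C^{\overline d}_{b\overline c}g_{a\overline d}=0,
\end{displaymath}
\begin{displaymath}
\rho_E(e_{\overline a})(g_{b\overline c})-\rho_E(e_{\overline c})(g_{b\overline a})+C^{d}_{b\overline a}g_{d\overline c}-C^{d}_{b\overline c}g_{d\overline a}-C^{\overline d}_{\overline a\,\overline c}g_{b\overline d}=0.
\end{displaymath}

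Finally I would substitute these two identities into the first and third formulas of Proposition~\ref{coef}. Using in addition the antisymmetries $C^{\overline e}_{\overline c a}=-C^{\overline e}_{a\overline c}$ and $C^{e}_{\overline a b}=-C^{e}_{b\overline a}$, and the fact that $(g^{\overline b a})$ is the inverse matrix of $(g_{a\overline b})$, the symmetric combination defining $\Gamma^{d}_{ab}$ collapses to $g^{\overline c d}\bigl(\rho_E(e_a)(g_{b\overline c})+C^{\overline e}_{\overline c a}g_{b\overline e}\bigr)$, while the expression for $\Gamma^{d}_{\overline a b}$ reduces all the way to $g^{\overline c d}C^{e}_{\overline a b}g_{e\overline c}=C^{d}_{\overline a b}$. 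I expect no conceptual difficulty in this last step; the one genuinely delicate point is keeping the bookkeeping of barred and unbarred indices, and of signs, consistent between the closedness identities above and the formulas of Proposition~\ref{coef} --- everything else is a direct, if somewhat lengthy, substitution.
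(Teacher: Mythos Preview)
Your proposal is correct and follows essentially the same route as the paper: derive the identities coming from $d_E\Phi=0$ in the complex frame and substitute them into the coefficients of Proposition~\ref{coef}, using $C^{\overline c}_{ab}=0$ from integrability. The only minor organizational difference is that you invoke the Section~3 theorem (Levi--Civita is almost complex in the K\"ahlerian case) to kill $\Gamma^{c}_{a\overline b}$ and $\Gamma^{\overline c}_{ab}$ up front, whereas the paper obtains $\Gamma^{\overline d}_{ab}=0$ directly from $C^{\overline c}_{ab}=0$ and then recovers $\Gamma^{d}_{a\overline b}=0$ as an output of the same substitution you describe; the two $d_E\Phi=0$ identities you wrote down coincide (after the obvious antisymmetries) with the paper's.
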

\begin{proof}
Since $(E,\rho_E,[\cdot,\cdot]_E,g)$ is K\"{a}hlerian, we have that $J_E$ is integrable and $d_E\Phi=0$. Then from Newlander-Nirenberg theorem we have $C^{\overline{c}}_{ab}=0$ and then $\Gamma^{\overline{d}}_{ab}=0$. Also, we have
\begin{eqnarray*}
0=id_E\Phi&=&\left(\rho_E(e_c)(g_{a\overline{b}})-\rho_E(e_a)(g_{c\overline{b}})-C^d_{ca}g_{d\overline{b}}-C^{\overline{d}}_{c\overline{b}}g_{a\overline{d}}+C^{\overline{d}}_{a\overline{b}}g_{c\overline{d}}\right)e^c\wedge e^{a}\wedge \overline{e^b}\\
&&+\left(\rho_E(e_{\overline{c}})(g_{a\overline{b}})-\rho_E(e_{\overline{b}})(g_{a\overline{c}})-C^d_{\overline{c}a}g_{d\overline{b}}-C^{\overline{d}}_{\overline{c}\,\overline{b}}g_{a\overline{d}}-C^{d}_{a\overline{b}}g_{d\overline{c}}\right)\overline{e^c}\wedge e^{a}\wedge \overline{e^b}
\end{eqnarray*} 
which implies
\begin{displaymath}
\rho_E(e_b)(g_{a\overline{c}})=\rho_E(e_a)(g_{b\overline{c}})+C^d_{ba}g_{d\overline{c}}+C^{\overline{d}}_{b\overline{c}}g_{a\overline{d}}-C^{\overline{d}}_{a\overline{c}}g_{b\overline{d}},
\end{displaymath}
\begin{displaymath}
\rho_E(e_{\overline{b}})(g_{a\overline{c}})=\rho_E(e_{\overline{c}})(g_{a\overline{b}})-C^d_{\overline{c}a}g_{d\overline{b}}-C^{\overline{d}}_{\overline{c}\,\overline{b}}g_{a\overline{d}}-C^{d}_{a\overline{b}}g_{d\overline{c}}.
\end{displaymath}
Now, replacing the above relations in the expression of coefficients of Levi-Civita connection we get
\begin{displaymath}
\Gamma^d_{ab}=g^{\overline{c}d}\left(\rho_E(e_a)(g_{b\overline{c}})+C^{\overline{e}}_{\overline{c}a}g_{b\overline{e}}\right)\,,\,\Gamma^d_{a\overline{b}}=0\,,\,\Gamma^d_{\overline{a}b}=C^d_{\overline{a}b}.
\end{displaymath}
which ends the proof.
\end{proof}
Also, by direct calculus, we have
\begin{proposition}
If the almost complex Lie algebroid $(E,\rho_E,[\cdot,\cdot]_E,J_E,g)$ over $M$ is K\"{a}hlerian then the nonzero curvatures of the Levi-Civita connection on the associated complex Lie algebroid $(E_{\mathbb{C}},\rho_E,[\cdot,\cdot]_E,g)$ are
\begin{eqnarray*}
&&R^d_{ab,c}=\rho_E(e_a)(\Gamma^d_{bc})-\rho_E(e_b)(\Gamma^d_{ac})+\Gamma^{e}_{bc}\Gamma^d_{ae}-\Gamma^{e}_{ac}\Gamma^d_{be}-C^{e}_{ab}\Gamma^d_{ec},\\
&&R^{\overline{d}}_{a\overline{b},\overline{c}}=\rho_E(e_{a})(\Gamma^{\overline{d}}_{\overline{b\,\overline{c}}})-C^{\overline{e}}_{a\overline{b}}\Gamma^{\overline{d}}_{\overline{e}\,\overline{c}},\,R^{\overline{d}}_{\overline{a}\,\overline{b},\overline{c}}=\overline{R^d_{ab,c}}\,,\,R^d_{a\overline{b},c}=-\overline{R^{\overline{d}}_{b\overline{a},\overline{c}}},
\end{eqnarray*}
where we have put $R(e_a,e_b)e_c=R^d_{ab,c}e_d$ and similarly for the other components of curvature.
\end{proposition}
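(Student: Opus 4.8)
The plan is to compute the four components directly from the curvature formula \eqref{I9}, evaluated on the frame $\{e_a,\overline{e_a}\}$ of $E_{\mathbb C}$, using two facts already available in this section. First, by the equivalence theorem above — in the K\"ahlerian case $J_E$ is integrable and $d_E\Phi=0$, which is equivalent to the Levi-Civita connection $D$ being an \emph{almost complex connection} — the connection $D$ preserves the splitting $E_{\mathbb C}=E^{1,0}\oplus E^{0,1}$. Hence for a section $s_3$ of pure type the section $R(s_1,s_2)s_3=D_{s_1}D_{s_2}s_3-D_{s_2}D_{s_1}s_3-D_{[s_1,s_2]_E}s_3$ is again of the same type as $s_3$; together with the $C^\infty(M)$-linearity of $R$, its skew-symmetry in the first two arguments, and complex conjugation, this already forces all ``type-mismatch'' components to vanish (for instance $R(e_a,e_b)\overline{e_c}$ and $R(\overline{e_a},\overline{e_b})e_c$), so that the only components that can survive are $R^d_{ab,c}$, $R^{\overline d}_{a\overline b,\overline c}$ and the two written as their conjugates. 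Second, by the preceding Corollary the only possibly nonzero Levi-Civita coefficients in the K\"ahlerian case are $\Gamma^d_{ab}$, $\Gamma^d_{\overline a b}=C^d_{\overline a b}$ and their conjugates $\Gamma^{\overline d}_{\overline a\,\overline b}=\overline{\Gamma^d_{ab}}$, $\Gamma^{\overline d}_{a\overline b}=C^{\overline d}_{a\overline b}$, while integrability gives $C^{\overline c}_{ab}=C^c_{\overline a\,\overline b}=0$.

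Granting this, the first formula is a two-line expansion: $D_{e_a}D_{e_b}e_c=D_{e_a}(\Gamma^d_{bc}e_d)=\bigl(\rho_E(e_a)(\Gamma^d_{bc})+\Gamma^e_{bc}\Gamma^d_{ae}\bigr)e_d$, and subtracting the $a\leftrightarrow b$ term and using $[e_a,e_b]_E=C^e_{ab}e_e$ (the barred part is zero) so that $D_{[e_a,e_b]_E}e_c=C^e_{ab}\Gamma^d_{ec}e_d$, one reads off $R^d_{ab,c}$ exactly as stated. The two identities $R^{\overline d}_{\overline a\,\overline b,\overline c}=\overline{R^d_{ab,c}}$ and $R^d_{a\overline b,c}=-\overline{R^{\overline d}_{b\overline a,\overline c}}$ are pure symmetry statements: the first is $\overline{R(e_a,e_b)e_c}=R(\overline{e_a},\overline{e_b})\overline{e_c}$, and the second follows by conjugating $R(e_a,\overline{e_b})e_c$ (which is of type $(1,0)$, hence has only an $e_d$-component) and using $R(\overline{e_a},e_b)=-R(e_b,\overline{e_a})$.

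The real content is the mixed component $R^{\overline d}_{a\overline b,\overline c}$ arising from $R(e_a,\overline{e_b})\overline{e_c}$. Here one uses $D_{\overline{e_b}}\overline{e_c}=\Gamma^{\overline d}_{\overline b\,\overline c}\overline{e_d}$ and $D_{e_a}\overline{e_d}=C^{\overline e}_{a\overline d}\overline{e_e}$ — note that in the K\"ahlerian case the ``mixed'' part of $D$ agrees with the matched-pair connection $p^{0,1}[\,\cdot\,,\,\cdot\,]_E$ of Proposition \ref{pmp} — together with $[e_a,\overline{e_b}]_E=C^f_{a\overline b}e_f+C^{\overline f}_{a\overline b}\overline{e_f}$. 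A first pass through \eqref{I9} yields, besides the two terms in the statement, the extra combination $\Gamma^{\overline e}_{\overline b\,\overline c}C^{\overline d}_{a\overline e}-\rho_E(\overline{e_b})(C^{\overline d}_{a\overline c})-C^{\overline e}_{a\overline c}\Gamma^{\overline d}_{\overline b\,\overline e}-C^f_{a\overline b}C^{\overline d}_{f\overline c}$, and the whole point is that this combination is identically zero. I would prove this by feeding in the Lie-algebroid Jacobi identity in the mixed directions $(e_a,\overline{e_b},\overline{e_c})$ — equivalently the matched-pair axiom \eqref{mp3} for $(E^{1,0},E^{0,1})$ — together with the relations among the $g_{a\overline b}$, the $\rho$'s and the structure functions $C$ that were extracted from $d_E\Phi=0$ in the proof of the Corollary, and the skew-symmetry $C^{\overline d}_{a\overline b}=-C^{\overline d}_{\overline b a}$; after substituting the Corollary's expression for $\Gamma^{\overline d}_{\overline b\,\overline c}$ in terms of $g$, the leftover derivative and quadratic terms cancel.

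I expect that last cancellation to be the only genuine obstacle. Deciding which components vanish costs nothing once one knows $D$ is almost complex, and the unmixed component is immediate; but collapsing the mixed component to the compact form requires matching the leftover terms against a Jacobi/$d_E\Phi=0$ identity rather than any single structure relation, and it is easy to mislay signs or the bar-placement on the indices while doing so.
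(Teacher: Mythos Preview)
Your approach is the same as the paper's (the paper's proof is literally the two words ``by direct calculus''), and most of your expansion is correct and considerably more careful than what the paper supplies. In particular, your computation of $R^d_{ab,c}$, the two conjugation identities, and your identification of the nontrivial cancellation hidden in the formula for $R^{\overline d}_{a\overline b,\overline c}$ are all on target; the leftover combination you isolate is exactly what must disappear, and your plan to kill it with the Jacobi identity in the $(e_a,\overline{e_b},\overline{e_c})$ directions together with the $d_E\Phi=0$ relations feeding into the explicit $\Gamma^{\overline d}_{\overline b\,\overline c}$ is the right one.

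There is, however, one genuine gap. You assert that once $D$ is almost complex, the components $R(e_a,e_b)\overline{e_c}$ and $R(\overline{e_a},\overline{e_b})e_c$ vanish ``for free'' by type considerations, skew-symmetry and conjugation. That is not so: the almost-complex property of $D$ only tells you that $R(e_a,e_b)\overline{e_c}\in\Gamma(E^{0,1})$, i.e.\ that its $e_d$-component $R^{d}_{ab,\overline c}$ is zero. It says nothing about the $\overline{e_d}$-component $R^{\overline d}_{ab,\overline c}$, which is of the correct type and a priori survives. To see that this component actually vanishes you need an extra identity: either run the Jacobi identity on $(e_a,e_b,\overline{e_c})$ and extract the $\overline{e_d}$-part (using $C^{\overline e}_{ab}=0$ and $\Gamma^{\overline d}_{a\overline c}=C^{\overline d}_{a\overline c}$), or invoke the pair-swap symmetry $g(R(e_a,e_b)\overline{e_c},e_d)=g(R(\overline{e_c},e_d)e_a,e_b)$ of the Levi-Civita curvature together with $g(E^{1,0},E^{1,0})=0$. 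Either route is short, but neither is ``skew-symmetry plus conjugation''; you should insert one of them before declaring that only the four listed components can be nonzero.
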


\begin{remark}
In \cite{L-T-W} the (complex) para-K\"{a}hlerian Lie algebroid is defined as a (complex) symplectic Lie algebroid $(E,\omega)$ with a splitting $E=E^{1,0}\oplus E^{0,1}$ as the direct sum a two polarizations, where a polarization of $(E,\omega)$ means a lagrangian subalgebroid of $E$, i.e. a subbundle which is closed under brackets and maximal isotropic with respect to $\omega$. Then for a para-K\"{a}hlerian Lie algebroid is proved that there is a unique torsion-free $E$--connection $\nabla$ on $E$ (called para-K\"{a}hlerian connection), for which covariant differentiation leaves the para-K\"{a}hler structure invariant, i.e. for every $s_1,s_2,s_3\in\Gamma(E)$, $\nabla_{s_1}$ leaves the splitting invariant, and $\rho_E(s_1)(\omega(s_2,s_3))=\omega(\nabla_{s_1}s_2,s_3)+\omega(s_2,\nabla_{s_1}s_3)$. The curvature of this connection is in $\left((E^{1,0})^*\wedge(E^{0,1})^*\right)\otimes End(E)$.
\end{remark}

\subsection{Chern forms of $E^{1,0}$ associated to an almost complex connection}

There exists a general construction of characteristic classes of a Lie algebroid $E$ \cite{Fe},  which mimics the Chern-Weil theory. More exactly, if we take a vector bundle $F\rightarrow M$ of rank $r$ and $\nabla:\Gamma(E)\times\Gamma(F)\rightarrow\Gamma(F)$ an $E$-connection on $F$, as well as we seen, the curvature operator of $\nabla$ denoted by $R_\nabla$ is a $F\otimes F^*$-valued $E$-form section of degree $2$. Let us consider $I^k(Gl(r,\mathbb{R}))$ the space of real, $ad$-invariant, symmetric, $k$-multilinear functions on the Lie algebra $gl(r,\mathbb{R})$. Then for every $\phi\in I^k(Gl(r,\mathbb{R}))$ the $2k$-forms on $E$ given by $\phi(R_\nabla)=\phi(R_\nabla,\ldots,R_\nabla)\in\Omega^{2k}(E)$ are $d_E$-closed, called the \textit{$E$-Chern forms of order $k$ of $F$}, and their define the $E$-cohomology classes $[\phi(R_\nabla)]\in H^{2k}(E)$ which are the \textit{$E$-principal characteristic classes of order $k$} of $F$, called also the \textit{$E$-Chern classes of $F$}. The characteristic classes are spanned by the classes $[c_k(R_\nabla)]$, where $c_k(R_\nabla)$ is the sum of the principal minors of order $k$ in $\det(R_\nabla-\lambda I)$, and do not depend on the choice of the connection $\nabla$.

In this subsection, following some ideas from \cite{N-O}, we use a real representation of an almost complex Lie algebroid $(E,\rho_E,[\cdot,\cdot]_E,J_E)$ to get some real $d_E$-closed forms on $E$ such that their $E$-cohomology classes generate the $E$-Chern ring of the complex vector bundle $E^{1,0}$. These forms are obtained using the almost complex structure $J_E$ on $E$ and the curvature forms of a (real) almost complex connection on $E$. These forms are called \textit{$E$-Chern forms} of $E^{1,0}$ associated to $\nabla$. 

Let $\nabla$ be an almost complex connection on the almost complex Lie algebroid $(E,\rho_E,[\cdot,\cdot]_E,J_E)$. Then as well as we seen $\nabla$ can be usually extended to a connection $\nabla^{\mathbb{C}}$ on $E_{\mathbb{C}}$ and the property $\nabla J_E=0$ of $\nabla$ implies that the covariant derivatives of the sections of $E^{1,0}$ are also section in $E^{1,0}$. We also consider $\{e_a,J_E(e_a)\}$, $a=1,\ldots,m$ be a local frame for the sections of $E$ over $U\subset M$, and for a better presentation of the notions in this subsection we shall denote $J_E(e_a)=e_{a^*}$, $a=1\ldots,m$. Then $\{e^{1,0}_a=e_a-ie_{a^*}\}$, $a=1\ldots,m$ is a local frame of $E^{1,0}\subset E_{\mathbb{C}}$. 

Let $R$ be the curvature tensor section of $\nabla$ and let $R^{\mathbb{C}}$ be the curvature tensor section of $\nabla^{\mathbb{C}}$. Then, we easily get that 
\begin{displaymath}
R^{\mathbb{C}}(s_1,s_2)e^{1,0}_a=R(s_1,s_2)e_a-iR(s_1,s_2)e_{a^*},
\end{displaymath}
for every real sections $s_1,s_2\in\Gamma(E)$.

Since $\nabla J_E=0$ we easily have $J_ER(s_1,s_2)=R(s_1,s_2)J_E$  and let $\left( 
\begin{array}{cccc}
R^{a}_b & -R^{a^*}_b  \\ 
R^{a^*}_b & R^{a}_b 
\end{array}
\right)$, $a,b=1\ldots,m$, be the matrix of $J_ER$ with respect to the local basis $\{e_a,e_{a^*}\}$, $a=1\ldots,m$, i.e.
\begin{eqnarray*}
J_ER(s_1,s_2)e_a&=&\sum_{b=1}^m(R^b_a(s_1,s_2)e_b+R^{b^*}_a(s_1,s_2)e_{b^*}),
\end{eqnarray*}
\begin{eqnarray*}
(J_ER)(s_1,s_2)e_{a^*}&=&(J_ER)(s_1,s_2)J_E(e_a)=J_E(J_ER)(s_1,s_2)e_a\\
&=&\sum_{b=1}^m(-R^{b^*}_a(s_1,s_2)e_b+R^{b}_a(s_1,s_2)e_{b^*}).
\end{eqnarray*}
The curvature matrix $\Phi$ of the restriction of $\nabla^{\mathbb{C}}$ to $E^{1,0}\subset E_{\mathbb{C}}$, with  respect to the local basis $\{e^{1,0}_a\}$, $a=1,\ldots,m$ is given by
\begin{eqnarray*}
\sum_{b=1}^m\Phi_a^b(s_1,s_2)e^{1,0}_b&=&R^{\mathbb{C}}(s_1,s_2)e^{1,0}_a=R(s_1,s_2)e_a-iR(s_1,s_2)e_{a^*}\\
&=&-(J_ER)(s_1,s_2)e_{a^*}-i(J_ER)(s_1,s_2)e_a\\
&=&\sum_{b=1}^m(R^{b^*}_a(s_1,s_2)e_b-R^b_a(s_1,s_2)e_{b^*}-iR^b_a(s_1,s_2)e_b-iR^{b^*}_a(s_1,s_2)e_{b^*})\\
&=&\sum_{b=1}^m(R^{b^*}_a(s_1,s_2)(e_b-ie_{b^*})-iR^b_a(s_1,s_2)(e_b-ie_{b^*}))\\
&=&\sum_{b=1}^m(R^{b^*}_a(s_1,s_2)-iR^b_a(s_1,s_2))e^{1,0}_b.
\end{eqnarray*}
Thus, we have
\begin{equation}
\label{B1}
\Phi^b_a=R^{b^*}_a-iR^{b}_a=-i(R^b_a+iR^{b^*}_a).
\end{equation}
Suppose now, that $\nabla$ is compatible with a Hermitian metric $g$ on $E$. If $\{e_a,e_{a^*}\}$, $a=1\ldots,m$ is an orthonormal local basis of sections of $E$ with respect to this metric then the matrix $\Phi$ has the property $\Phi^t=-\overline{\Phi}$, see Remark \ref{rcurv}.

Thus, the $E$-Chern forms of $E^{1,0}$ are essentially determined by the matrix
\begin{equation}
\label{B2}
i\Phi=R+iR^*
\end{equation}
and every element $\phi\in I^k(Gl(m,\mathbb{C}))$ obtained from $\det(A-\lambda I)$, where $A\in gl(m,\mathbb{C})$.

We denoted by $R$ the $m\times m$ matrix with entries the $2$-forms $R^b_a$ and $R^*$ the $m\times m$ matrix with entries the $2$-forms $R^{b^*}_a$. On the other hand it is well known that the elements of $I^k(Gl(m,\mathbb{C}))$ are also generated by the polynomials
\begin{displaymath}
{\rm trace}\,A^k\,,\,k=0,1,\ldots,m\,,\,A\in gl(m,\mathbb{C}).
\end{displaymath}
These polynomials can be used to construct some $d_E$-closed forms on $E$ which are called \textit{$E$-Chern forms of $E^{1,0}$ associated to $\nabla$}.
\begin{theorem}
\label{Chern}
The $E$-Chern forms of $E^{1,0}$ given by ${\rm trace}\,(i\Phi)^k$ are, up to a constant factor the forms on $E$, given by ${\rm trace}\,\left( 
\begin{array}{cccc}
R & -R^{*}  \\ 
R^{*} & R 
\end{array}
\right)^k$, i.e. they can be constructed using the matrix of $J_ER$, where $J_E$ is the almost complex structure on $E$ and $R$ is the curvature of an almost complex connection $\nabla$ on $E$.
\end{theorem}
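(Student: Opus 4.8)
The plan is to reduce the statement to the elementary fact that the ``realification'' map from complex $m\times m$ matrices to real $2m\times 2m$ matrices is a unital ring homomorphism which doubles the real part of the trace. From \eqref{B2} we already have $i\Phi=R+iR^{*}$, where $R=(R^{b}_{a})$ and $R^{*}=(R^{b^{*}}_{a})$ are exactly the $m\times m$ blocks of the matrix $\mathcal{R}:=\left(\begin{array}{cc}R&-R^{*}\\ R^{*}&R\end{array}\right)$ of $J_{E}R$ with respect to $\{e_{a},e_{a^{*}}\}$. Hence $\mathcal{R}=\iota(i\Phi)$, where $\iota$ sends a matrix $A=P+iQ$, with $P,Q$ matrices whose entries are (real) $E$-forms of even degree, to $\left(\begin{array}{cc}P&-Q\\ Q&P\end{array}\right)$. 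Equivalently, $J_{E}R$ is just $i$ times the curvature of the restriction $\nabla^{\mathbb{C}}|_{E^{1,0}}$ read off in the underlying real frame $\{e_{a},e_{a^{*}}\}$.

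First I would verify that $\iota$ is multiplicative, $\iota(AB)=\iota(A)\iota(B)$. The only thing to watch is that the matrix entries are $E$-forms rather than scalars; but they are of even degree, hence commute in $\Omega^{\bullet}(E)$, so the standard $2\times 2$ block computation applies verbatim. Therefore $\iota\big((i\Phi)^{k}\big)=\mathcal{R}^{k}$ for every $k$.

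Next I would record the trace identity: for any $m\times m$ matrix $B=S+iT$ with even-degree $E$-form entries one has $\mathrm{trace}\,\iota(B)=2\,\mathrm{trace}\,S=\mathrm{trace}\,B+\overline{\mathrm{trace}\,B}$, the bar being conjugation of complex-valued $E$-forms. Applied to $B=(i\Phi)^{k}$ this gives $\mathrm{trace}\,\mathcal{R}^{k}=\mathrm{trace}\,(i\Phi)^{k}+\overline{\mathrm{trace}\,(i\Phi)^{k}}$. It then remains to see that $\mathrm{trace}\,(i\Phi)^{k}$ is a real form. Here I would use that, for $\nabla$ compatible with the Hermitian metric $g$ and $\{e_{a},e_{a^{*}}\}$ orthonormal, the curvature matrix satisfies $\Phi^{t}=-\overline{\Phi}$ (cf. Remark \ref{rcurv}), so $(i\Phi)^{t}=\overline{i\Phi}$; since even forms commute, conjugation and transposition are ring homomorphisms on these matrices and commute with taking powers, whence $\overline{\mathrm{trace}\,(i\Phi)^{k}}=\mathrm{trace}\big(((i\Phi)^{k})^{t}\big)=\mathrm{trace}\,(i\Phi)^{k}$. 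Thus $\mathrm{trace}\,\mathcal{R}^{k}=2\,\mathrm{trace}\,(i\Phi)^{k}$, which is the asserted identity with constant factor $\tfrac12$.

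The only real care required — and the step I would be most attentive to — is this form bookkeeping: one must use systematically that products of even-degree $E$-forms commute, so that matrices over $\Omega^{\mathrm{even}}(E)\otimes\mathbb{C}$ obey the same formal algebra (multiplication, transposition, conjugation, trace of powers) as matrices over a commutative ring; granting this, both the multiplicativity of $\iota$ and the reality of $\mathrm{trace}\,(i\Phi)^{k}$ are automatic, and there is no further obstacle. Without the metric-compatibility hypothesis the same computation still yields $\mathrm{trace}\,\mathcal{R}^{k}=2\,\mathrm{Re}\,\mathrm{trace}\,(i\Phi)^{k}$; since every almost complex Lie algebroid admits a Hermitian metric, this already suffices to conclude that the $E$-Chern ring of $E^{1,0}$ is generated by the real forms $\mathrm{trace}\,\mathcal{R}^{k}$.
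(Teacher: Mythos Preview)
Your proposal is correct and follows essentially the same route as the paper: both arguments identify the block matrix of $J_ER$ as the realification $\iota(i\Phi)$, use that $\iota$ is multiplicative (so $\iota((i\Phi)^k)=\mathcal{R}^k$) and that it doubles the real part of the trace, and then invoke the metric compatibility (via $(i\Phi)^t=\overline{i\Phi}$) to see that $\mathrm{trace}\,(i\Phi)^k$ is already real, yielding the factor $\tfrac12$. Your explicit remark that the entries are even-degree $E$-forms---hence commute, so the matrix algebra behaves as over a commutative ring---is a point the paper leaves implicit but is indeed the only place care is needed.
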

\begin{proof}
We use an orthonormal local basis $\{e_a,e_{a^*}\}$, $a=1,\ldots,m$ with respect to a Hermitian metric on $E$. Then we have
\begin{displaymath}
R^t+iR^{*t}=(i\Phi)^t=\overline{i\Phi}=R-iR^*.
\end{displaymath}
Thus, $R$ is a symmetric matrix and $R^*$ is a skew-symmetric matrix. The form ${\rm trace}\,(i\Phi)^k$ is real since we have
\begin{eqnarray*}
\overline{{\rm trace}\,(i\Phi)^k}&=&{\rm trace}\,\overline{(i\Phi)^k}={\rm trace}\,((i\Phi)^t)^k={\rm trace}\,((i\Phi)^k)^t={\rm trace}\,(i\Phi)^k.
\end{eqnarray*} 
It is well known that to the complex matrix $i\Phi=R+iR^*$ it corresponds its real representation $\left( 
\begin{array}{cccc}
R & -R^{*}  \\ 
R^{*} & R 
\end{array}
\right)$, and to $(i\Phi)^k=\Pi+i\Pi^*$ corresponds $\left( 
\begin{array}{cccc}
\Pi & -\Pi^{*}  \\ 
\Pi^{*} & \Pi 
\end{array}
\right)=\left( 
\begin{array}{cccc}
R & -R^{*}  \\ 
R^{*} & R 
\end{array}
\right)^k$.

Then we have
\begin{eqnarray*}
{\rm trace}\,(i\Phi)^k&=&{\rm trace}\,(\Pi+i\Pi^*)={\rm trace}\,\Pi=\frac{1}{2}{\rm trace}\,\left( 
\begin{array}{cccc}
\Pi & -\Pi^{*}  \\ 
\Pi^{*} & \Pi 
\end{array}
\right)=\frac{1}{2}{\rm trace}\,\left( 
\begin{array}{cccc}
R & -R^{*}  \\ 
R^{*} & R 
\end{array}
\right)^k.
\end{eqnarray*}
Since the $E$-Chern forms of $E^{1,0}$ do not depend on the local frame used for their local representation, it follows that we can use the matrices associated to $J_ER$ to get them.
\end{proof}

The result obtained above can be extended to the case of an almost complex connection $\nabla$ on $E$ which is not necessarily compatible with a Hermitian metric on $E$. In this case the imaginary part of the form ${\rm trace}\,(i\Phi)^k$ is a $d_E$-exact form on $E$ and the corresponding $E$-Chern form of order $k$ of $E^{1,0}$ is the real part of $(i\Phi)^k$. Then the $E$-Chern form of order $k$ of $E^{1,0}$ given by trace of the $k$-power of the matrix associated with $J_ER$ in an arbitrary local frame of $E$.

\subsection{A metric product connection}
In this subsection we consider a metric product connection associated to an almost Hermitian Lie algebroid, following some arguments from the theory of complex distributions, see \cite{Va-Iasi}. 
Also, a $2$--form section for $E^{0,1}$ similar to the second fundamental form of complex distributions is studied in our setting.

Let $E_{\mathbb{C}}=E^{1,0}\oplus E^{0,1}$ be the  complexification of an almost Hermitian Lie algebroid. We consider the natural projections $p^{1,0}:E_{\mathbb{C}}\rightarrow E^{1,0}$ and $p^{0,1}:E_{\mathbb{C}}\rightarrow E^{0,1}$  defined in \eqref{m1}.

It is easy to see that $E^{1,0}$ is the $h$-orthogonal of $E^{0,1}$ in $E_{\mathbb{C}}$ and conversely, where $h$ is the Hermitian metric on $E_{\mathbb{C}}$ defined by $h(s_1,s_2)=g(s_1,\overline{s_2})$, for every $s_1,s_2\in\Gamma(E_{\mathbb{C}})$. We define the \textit{metric product connection} $\widetilde{D}$ by
\begin{equation}
\label{m2}
\widetilde{D}_{s_1}s_2=p^{0,1}D_{s_1}p^{0,1}s_2+p^{1,0}D_{s_1}p^{1,0}s_2=D_{s_1}s_2+\frac{1}{2}\left(D_{s_1}J_E\right)J_E(s_2),\,\forall\,s_1,s_2\in\Gamma(E_{\mathbb{C}}),
\end{equation}
where $D$ is the Levi-Civita connection on $E_{\mathbb{C}}$. We notice that $\widetilde{D}$ satisfies the conditions
\begin{equation}
\label{m3}
\widetilde{D}p^{1,0}=\widetilde{D}p^{0,1}=\widetilde{D}h=0
\end{equation}
and has the torsion
\begin{equation}
\label{m4}
T_{\widetilde{D}}(s_1,s_2)=p^{0,1}\left(D_{s_2}p^{1,0}s_1-D_{s_1}p^{1,0}s_2\right)+p^{1,0}\left(D_{s_2}p^{0,1}s_1-D_{s_1}p^{0,1}s_2\right)
\end{equation}
or, by using \eqref{m1}, we get
\begin{equation}
\label{m5}
T_{\widetilde{D}}(s_1,s_2)=\frac{1}{2}\left(\left(D_{s_1}J_E\right)J_E(s_2)-\left(D_{s_2}J_E\right)J_E(s_1)\right).
\end{equation}
If $\{e_a\}$, $a=1,\ldots,m$ is a local basis of $E^{1,0}$ and $\{\overline{e_a}\}$, $a=1,\ldots,m$ is a local basis of $E^{0,1}$then, we have
\begin{displaymath}
\widetilde{D}_{e_a}e_b=\Gamma^d_{ab}e_d\,,\,\widetilde{D}_{e_a}\overline{e_b}=\Gamma^{\overline{d}}_{a\overline{b}}\overline{e_d}\,,\,\widetilde{D}_{\overline{e_a}}e_b=\Gamma^d_{\overline{a}b}e_d\,,\,\widetilde{D}_{\overline{e_a}}\overline{e_b}=\Gamma^{\overline{d}}_{\overline{a}\,\overline{b}}\overline{e_d}
\end{displaymath}
and
\begin{displaymath}
T_{\widetilde{D}}(e_a,e_b)=C^{\overline{d}}_{ba}\overline{e_d}\,,\,T_{\widetilde{D}}(e_a,\overline{e_b})=(C^d_{\overline{b}a}-\Gamma^d_{\overline{b}a})e_d+(\Gamma^{\overline{d}}_{a\overline{b}}-C^{\overline{d}}_{a\overline{b}})\overline{e_d},
\end{displaymath}
\begin{displaymath}T_{\widetilde{D}}(\overline{e_a},e_b)=-T_{\widetilde{D}}(e_b,\overline{e_a})=\overline{T_{\widetilde{D}}(\overline{e_a},e_b)}\,,\,T_{\widetilde{D}}(\overline{e_a},\overline{e_b})=\overline{T_{\widetilde{D}}(e_a,e_b)},
\end{displaymath}
where $\Gamma^\cdot_{\cdot\cdot}$ are the coefficients of the Levi-Civita connection from Proposition \ref{coef}.

As in the classical theory of submanifolds, the tensor section
\begin{equation}
\label{m6}
B^{0,1}(s_1,s_2)=-\frac{1}{2}\left(D_{p^{0,1}s_1}J_E\right)J_E(p^{0,1}s_2)=p^{1,0}\left(D_{p^{0,1}s_1}p^{0,1}s_2\right)
\end{equation} 
will be called the \textit{second fundamental form section} of $E^{0,1}$ and the equations:
\begin{equation}
\label{m7}
D_{p^{0,1}s_1}p^{0,1}s_2=\widetilde{D}_{p^{0,1}s_1}p^{0,1}s_2-\frac{1}{2}\left(D_{p^{0,1}s_1}J_E\right)J_E(p^{0,1}s_2),
\end{equation}
\begin{equation}
\label{m8}
D_{p^{0,1}s_1}p^{1,0}s_2=\widetilde{D}_{p^{0,1}s_1}p^{1,0}s_2-\frac{1}{2}\left(D_{p^{0,1}s_1}J_E\right)J_E(p^{1,0}s_2)
\end{equation}
will be called the \textit{Gauss-Weingarten equations} of $E^{0,1}$.

Locally, $B^{0,1}$ is given by $B^{0,1}(e_a,e_b)=B^{0,1}(e_a,\overline{e_b})=B^{0,1}(\overline{e_a},e_b)=0\,,\,B^{0,1}(\overline{e_a},\overline{e_b})=\Gamma^d_{\overline{a}\,\overline{b}}e_d$.
\begin{remark}
If $(E,\rho_E,[\cdot,\cdot]_E,g)$ is Hermitian then the integrability of $J_E$ implies $C^{\overline{d}}_{ab}=0$ and taking into account the expression of the coefficients $\Gamma^{\overline{d}}_{ab}$ from Proposition \ref{coef} it follows that $B^{0,1}$ vanishes. In the end of this subsection we obtain that conversely is also true. 
\end{remark}
Similarly, the operators $W^{0,1}_s\in End_{\mathbb{C}}(E_{\mathbb{C}})$ defined by
\begin{equation}
\label{m10}
W^{0,1}_{s_2}s_1:=\frac{1}{2}\left(D_{p^{0,1}s_1}J_E\right)J_E(p^{1,0}s_2)=-p^{0,1}D_{p^{0,1}s_1}p^{1,0}s_2\,,\,\forall\,s_1,s_2\in\Gamma(E_{\mathbb{C}})
\end{equation}
will be called the \textit{Weingarten operators} for $E^{0,1}$ and the metric character of $\widetilde{D}$ implies
\begin{equation}
\label{m11}
h(W^{0,1}_{s_3}s_1,s_2)=h(s_3,B^{0,1}(s_1,s_2)).
\end{equation}
Locally, we have $W^{0,1}_{e_b}e_a=W^{0,1}_{\overline{e_b}}e_a=W^{0,1}_{\overline{e_b}}\overline{e_a}=0\,,\,W^{0,1}_{e_b}\overline{e_a}=-\Gamma^{\overline{d}}_{\overline{a}b}\overline{e_d}$.

The relation \eqref{m5} says that the second fundamental form section $B^{0,1}$ of $E^{0,1}$ is symmetric iff $E^{0,1}$ is integrable, and, in this case, $\widetilde{D}|_{E^{0,1}}$ may be seen as a metric torsionless connection along $E^{0,1}$. Of course, as usual, we can relate the curvature tensors of the connections $D$ and $\widetilde{D}$ and then the analogue formulas for Gauss, Ricci and Codazzi equations can be obtained for our case.

Also, the second fundamental form section $B^{0,1}$ provides a \textit{mean curvature section} $H^{0,1}$ of $E^{0,1}$, given by
\begin{equation}
\label{m13}
H^{0,1}=H(E^{0,1})={\rm tr}\,B^{0,1}=\sum_{a,b=1}^mB^{0,1}(f_a,\overline{f_b})\in\Gamma(E^{1,0}),
\end{equation}
where $\{f_a\}$, $a=1,\ldots,m$ is an arbitrary, $h$-unitary local basis of $E_{\mathbb{C}}$.

The relation between $H^{0,1}$ and Weingarten operators is provided by the $h$-dual $1$--form section $k^{0,1}$ of $H^{0,1}$ which is given by
\begin{equation}
\label{m14}
k^{0,1}(s)=\sum_{a=1}^mh(W^{0,1}_sf_a,\overline{f_a}).
\end{equation}
As usual, will we say that $E^{0,1}$ is \textit{totally geodesic} if its second fundamental form section $B^{0,1}$ vanishes and it is \textit{minimal} if $H^{0,1}=0$. If there exists a section $s\in\Gamma(E^{1,0})$ such that
\begin{equation}
\label{m15}
B^{0,1}(s_1,s_2)=g(p^{0,1}s_1,p^{0,1}s_2)s,
\end{equation}
the subbundle $E^{0,1}$ is \textit{totally umbilical}.
\begin{remark}
Taking into account that $g(p^{0,1}s_1,p^{0,1}s_2)=0$ it follows that $E^{0,1}$ is totally geodesic iff it is totally umbilical and in this case it is also minimal.
\end{remark}

Similar calculations as in the proof of Proposition 2.1 from \cite{Va-Iasi} leads to 
\begin{proposition}
Let $(E,\rho_E,[\cdot,\cdot]_E,g)$ be an almost Hermitian Lie algebroid over a smooth manifold $M$. The second fundamental form section $B^{0,1}$ of $E^{0,1}$ is given by the formulas
\begin{displaymath}
{\rm Im}\,B^{0,1}(s_1,s_2)={\rm Re}\,B^{0,1}(s_1,J_E(s_2))\,\,{\rm and}\,\,g({\rm Re}\,B^{0,1}(s_1,s_2),s_3)=
\end{displaymath}
\begin{equation}
\label{m16}
=\frac{1}{16}\left[g(N_{J_E}(s_1,s_2),s_3)+g(N_{J_E}(s_2,J_E(s_3)),J_E(s_1))-g(N_{J_E}(J_E(s_3),s_1),J_E(s_2))\right]
\end{equation}
where $s_1,s_2,s_3\in\Gamma(E)$.
\end{proposition}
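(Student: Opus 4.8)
The plan is to reduce the claim to two facts already available for an almost Hermitian Lie algebroid: the pointwise identity $(D_{s}J_E)\circ J_E=-J_E\circ(D_{s}J_E)$, obtained by differentiating $J_E^2=-{\rm id}$, and Proposition \ref{pIII1}, which expresses $2g((D_sJ_E)t,u)$ through $d_E\Phi$ and $N_{J_E}$. First I would rewrite the definition \eqref{m6}: since $p^{0,1}=\tfrac12(I_E+iJ_E)$ one has $J_E\circ p^{0,1}=-i\,p^{0,1}$, hence $B^{0,1}(s_1,s_2)=\tfrac{i}{2}\,(D_{p^{0,1}s_1}J_E)(p^{0,1}s_2)$. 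Expanding $p^{0,1}s_1=\tfrac12(s_1+iJ_Es_1)$ and $p^{0,1}s_2=\tfrac12(s_2+iJ_Es_2)$, and using the anticommutation to push every $J_E$ off the second slot, one collects real and imaginary parts in the form
\begin{displaymath}
{\rm Re}\,B^{0,1}(s_1,s_2)=\tfrac18\big(J_E(D_{s_1}J_E)s_2-(D_{J_Es_1}J_E)s_2\big),\quad {\rm Im}\,B^{0,1}(s_1,s_2)=\tfrac18\big((D_{s_1}J_E)s_2+J_E(D_{J_Es_1}J_E)s_2\big)
\end{displaymath}
for real sections $s_1,s_2\in\Gamma(E)$.

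The first asserted equality is then formal: substituting $J_E(s_2)$ for $s_2$ in the expression for ${\rm Re}\,B^{0,1}$ and applying $(D_sJ_E)J_E=-J_E(D_sJ_E)$ to the two resulting terms turns it, term by term, into the expression for ${\rm Im}\,B^{0,1}(s_1,s_2)$, so ${\rm Im}\,B^{0,1}(s_1,s_2)={\rm Re}\,B^{0,1}(s_1,J_E(s_2))$.

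For the second equality I would pair ${\rm Re}\,B^{0,1}(s_1,s_2)$ with a real section $s_3$, move $J_E$ off the first term using that $g$ is Hermitian (so $J_E$ is $g$-skew), and then apply Proposition \ref{pIII1} to each of $g((D_{s_1}J_E)s_2,J_E(s_3))$ and $g((D_{J_Es_1}J_E)s_2,s_3)$, simplifying $J_E^2=-I_E$ inside the $d_E\Phi$-arguments. This writes $16\,g({\rm Re}\,B^{0,1}(s_1,s_2),s_3)$ as a $J_E$-twisted combination of $d_E\Phi$, of the shape $d_E\Phi(s_1,J_Es_2,s_3)+d_E\Phi(s_1,s_2,J_Es_3)+d_E\Phi(J_Es_1,s_2,s_3)-d_E\Phi(J_Es_1,J_Es_2,J_Es_3)$, plus a couple of $g(N_{J_E}(\cdot,\cdot),\cdot)$ terms. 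The remaining point is that this twisted $d_E\Phi$-combination is itself a (cyclic) sum of Nijenhuis terms: I would get this by expanding $d_E\Phi(X,Y,Z)=g(Y,(D_XJ_E)Z)-g(X,(D_YJ_E)Z)+g(X,(D_ZJ_E)Y)$ — valid since $D$ is metric and torsion free — and feeding Proposition \ref{pIII1} in once more; alternatively, and more cheaply, one observes that the twisted combination is antisymmetric in $s_1,s_2$ by antisymmetry of the $3$-form $d_E\Phi$, while torsion-freeness of $D$ gives $B^{0,1}(s_1,s_2)-B^{0,1}(s_2,s_1)=p^{1,0}\big[p^{0,1}s_1,p^{0,1}s_2\big]_E$, whose real part equals $-\tfrac18N_{J_E}(s_1,s_2)$, and comparing the two identifies the $d_E\Phi$-combination in terms of $N_{J_E}$. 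Inserting this back and tidying up with the standard Nijenhuis identities $N_{J_E}(J_Es,t)=N_{J_E}(s,J_Et)=-J_EN_{J_E}(s,t)$, the antisymmetry of $N_{J_E}$, and $g(J_E\cdot,J_E\cdot)=g(\cdot,\cdot)$, gives \eqref{m16}.

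I expect the bookkeeping in this last step — verifying that the four $J_E$-twisted copies of $d_E\Phi$ collapse precisely to the prescribed combination of Nijenhuis terms — to be the only genuine obstacle; everything else is formal juggling with the projectors $p^{1,0},p^{0,1}$ and the Hermitian metric, exactly as in the distribution-theoretic computation of \cite{Va-Iasi}.
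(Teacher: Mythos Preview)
Your proposal is correct and follows essentially the same route as the paper, which does not give an independent argument but simply defers to the computation of Proposition~2.1 in \cite{Va-Iasi}; your outline --- expanding the projectors $p^{0,1}$, $p^{1,0}$ to obtain closed expressions for ${\rm Re}\,B^{0,1}$ and ${\rm Im}\,B^{0,1}$, applying Proposition~\ref{pIII1} to rewrite $g((D_\cdot J_E)\cdot,\cdot)$ via $d_E\Phi$ and $N_{J_E}$, and then eliminating the $J_E$--twisted combination of $d_E\Phi$ by the torsion-free identity ${\rm Re}\big(B^{0,1}(s_1,s_2)-B^{0,1}(s_2,s_1)\big)=-\tfrac{1}{8}N_{J_E}(s_1,s_2)$ together with the relations $N_{J_E}(J_Es,t)=-J_EN_{J_E}(s,t)$ --- is precisely that computation transported to the Lie algebroid setting. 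Your own caveat is accurate: the only delicate point is the final bookkeeping of signs and of the three cyclic $N_{J_E}$--terms (indeed the paper's companion formula~\eqref{m17} confirms that the intermediate $(D_\cdot\Phi)$/$d_E\Phi$ expression is the intended route).
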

By direct calculations we have the following equivalent form of the second formula of \eqref{m16}:
\begin{equation}
\label{m17}
g({\rm Re}\,B^{0,1}(s_1,s_2),s_3)=-\frac{1}{8}\left[(D_{J_{E}(s_1)}\Phi)(s_2,s_3)+(D_{s_1}\Phi)(J_E(s_2),s_3)\right].
\end{equation}
Also, it is easy to see that
\begin{equation}
\label{m18}
B^{0,1}(J_E(s_1),J_E(s_2))=-B^{0,1}(s_1,s_2), 
\end{equation}
which yields the following interpretation of the Nijenhuis tensor, namely
\begin{equation}
\label{m19}
N_{J_E}=16{\rm Re}\,(alt\,B^{0,1}).
\end{equation}
\begin{corollary}
\label{curvsection}
Let $(E,\rho_E,[\cdot,\cdot]_E,g)$ be an almost Hermitian Lie algebroid over a smooth manifold $M$. The mean curvature section $H^{0,1}$ of $E^{0,1}$ is zero. The second fundamental form section $B^{0,1}$ of $E^{0,1}$ vanishes iff $(E,\rho_E,[\cdot,\cdot]_E,g)$ is Hermitian.
\end{corollary}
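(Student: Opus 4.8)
The plan is to read off both assertions directly from the structural results already established for $B^{0,1}$, rather than to recompute anything from scratch. For the mean curvature, I would compute $H^{0,1}={\rm tr}\,B^{0,1}$ using the local description of $B^{0,1}$ obtained just above: in a local basis $\{e_a\}$ of $E^{1,0}$ and $\{\overline{e_a}\}$ of $E^{0,1}$ the only nonvanishing components are $B^{0,1}(\overline{e_a},\overline{e_b})=\Gamma^d_{\overline{a}\,\overline{b}}e_d$, and these are symmetric and skew-symmetric at once in $a,b$ is not quite the point — rather, the trace in \eqref{m13} pairs a type-$(1,0)$ argument with a type-$(0,1)$ argument, i.e. it sums $B^{0,1}(f_a,\overline{f_b})$ over an $h$-unitary basis of $E_{\mathbb{C}}$, and from the local formulas $B^{0,1}(e_a,\overline{e_b})=0$. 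Hence $H^{0,1}=0$ identically, with no hypothesis needed. Alternatively one can invoke \eqref{m18}: since $B^{0,1}\circ(J_E\times J_E)=-B^{0,1}$ and the trace is taken over a $J_E$-adapted frame, the contributions cancel in pairs.

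For the equivalence, the implication ``Hermitian $\Rightarrow B^{0,1}=0$'' is exactly the content of the Remark following \eqref{m8}: when $N_{J_E}=0$ the Newlander--Nirenberg type Theorem \ref{t1} gives $C^{\overline{d}}_{ab}=0$, and inspecting the formula for $\Gamma^{\overline{d}}_{ab}$ in Proposition \ref{coef} shows every term carries a structure function of the form $C^{\overline{e}}_{\bullet\bullet}$ with holomorphic lower indices, so all $\Gamma^{\overline{d}}_{ab}$ vanish; since $B^{0,1}(\overline{e_a},\overline{e_b})=\Gamma^d_{\overline{a}\,\overline{b}}e_d=\overline{\Gamma^{\overline{d}}_{ab}}\,e_d$ by the conjugation relations for the connection coefficients, $B^{0,1}=0$. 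For the converse, suppose $B^{0,1}=0$. Then in particular $alt\,B^{0,1}=0$, so its real part vanishes, and \eqref{m19}, $N_{J_E}=16\,{\rm Re}\,(alt\,B^{0,1})$, forces $N_{J_E}=0$; thus the Lie algebroid is Hermitian. This is the clean way to close the loop, and it is the step I expect to matter most — it is what makes ``$B^{0,1}$ vanishes'' genuinely equivalent to integrability rather than merely implied by it.

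The main (and essentially only) obstacle is bookkeeping: one must be careful that the trace in \eqref{m13} is the $E^{0,1}$-trace pairing a holomorphic and an antiholomorphic slot, so that the vanishing of the mixed components $B^{0,1}(e_a,\overline{e_b})$ — and not of the ``diagonal'' components $B^{0,1}(\overline{e_a},\overline{e_b})$ — is what kills $H^{0,1}$; and one must check that \eqref{m19} really does yield $N_{J_E}=0$ from $alt\,B^{0,1}=0$ alone, which it does because the identity expresses $N_{J_E}$ entirely in terms of ${\rm Re}\,(alt\,B^{0,1})$ with no other correction term. Once these two observations are in place the proof is immediate, so I would keep it short:

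\begin{proof}
The first assertion is immediate from \eqref{m13} together with the local expression of $B^{0,1}$: the trace pairs a section of $E^{1,0}$ with one of $E^{0,1}$, and $B^{0,1}(e_a,\overline{e_b})=0$ for all $a,b$, whence $H^{0,1}=0$. (Equivalently, by \eqref{m18} the contributions to the trace over a $J_E$-adapted unitary frame cancel in pairs.)

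For the second assertion, assume first that $(E,\rho_E,[\cdot,\cdot]_E,g)$ is Hermitian, i.e. $N_{J_E}=0$. By Theorem \ref{t1} this gives $C^{\overline{d}}_{ab}=0$, and then the formula for $\Gamma^{\overline{d}}_{ab}$ in Proposition \ref{coef} shows $\Gamma^{\overline{d}}_{ab}=0$; by the conjugation relations for the connection coefficients $\Gamma^{d}_{\overline{a}\,\overline{b}}=\overline{\Gamma^{\overline{d}}_{ab}}=0$, so $B^{0,1}(\overline{e_a},\overline{e_b})=\Gamma^{d}_{\overline{a}\,\overline{b}}e_d=0$; since the remaining components of $B^{0,1}$ vanish as well, $B^{0,1}=0$.

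Conversely, if $B^{0,1}=0$ then in particular $alt\,B^{0,1}=0$, hence ${\rm Re}\,(alt\,B^{0,1})=0$, and \eqref{m19} gives $N_{J_E}=16\,{\rm Re}\,(alt\,B^{0,1})=0$. Thus $J_E$ is integrable and the almost Hermitian Lie algebroid is Hermitian.
\end{proof}
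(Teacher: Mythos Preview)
Your proof is correct. Two remarks on the comparison with the paper.

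For $H^{0,1}=0$, the paper uses exactly your parenthetical alternative: a real $J_E$-adapted orthonormal frame $\{e_a, J_E(e_a)\}$ together with \eqref{m18}, so that the summands cancel in pairs. Your primary argument---that the trace in \eqref{m13} hits only the vanishing mixed components $B^{0,1}(e_a,\overline{e_b})$---depends on reading \eqref{m13} as pairing a slot in $E^{1,0}$ with one in $E^{0,1}$; under that reading $H^{0,1}$ would vanish for \emph{any} connection, since $p^{0,1}|_{E^{1,0}}=0$ already kills $B^{0,1}$ on such inputs by \eqref{m6}. The paper's own proof makes clear that the intended trace runs over a real frame, so your alternative is the argument to keep.

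For the equivalence, the paper handles both directions invariantly via \eqref{m16} and \eqref{m19}: from $N_{J_E}=0$, \eqref{m16} gives ${\rm Re}\,B^{0,1}=0$, and the first relation in \eqref{m16} then gives ${\rm Im}\,B^{0,1}=0$; conversely \eqref{m19} yields $N_{J_E}=0$ from $B^{0,1}=0$. Your forward direction via the explicit coefficients $\Gamma^{\overline{d}}_{ab}$ of Proposition~\ref{coef} is also correct (it is precisely the content of the Remark following the local formulas for $B^{0,1}$). The paper's route through \eqref{m16} is more coordinate-free, while yours makes the mechanism visible at the level of structure functions; both are valid, and for the converse you and the paper agree on \eqref{m19}.
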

\begin{proof}
Taking into account the definition \eqref{m13} of $H^{0,1}$ with an orthonormal basis of the form $\{e_a,J_E(e_a)\}$, $a=1,\ldots,m$ and using \eqref{m18}, we obtain $H^{0,1}=0$. The formulas \eqref{m19} and \eqref{m16} implies that $B^{0,1}=0$ iff $N_{J_E}=0$.
\end{proof}

\noindent 
Cristian Ida\\
Department of Mathematics and Informatics, University Transilvania of Bra\c{s}ov\\
Address: Bra\c{s}ov 500091, Str. Iuliu Maniu 50,  Rom\^{a}nia\\
 email:\textit{cristian.ida@unitbv.ro}
\medskip

\noindent 
Paul Popescu\\
Department of Applied Mathematics, University of Craiova\\
Address: Craiova, 200585,  Str. Al. Cuza, No. 13,  Rom\^{a}nia\\
 email:\textit{paul$_{-}$p$_{-}$popescu@yahoo.com}

\end{document}